\definecolor{bookColor}{cmyk}{0 ,0 ,0 ,1} 
\DeclareMathOperator{\Aut}{Aut}
\DeclareMathOperator{\Sgn}{Sgn}
\DeclareMathOperator{\sgn}{sgn}
\DeclareMathOperator{\Ind}{Ind}
\DeclareMathOperator{\Res}{Res}
\DeclareMathOperator{\Sym}{Sym}
\DeclareMathOperator{\Alt}{Alt}
\DeclareMathOperator{\id}{id}
\DeclareMathOperator{\Stab}{Stab}
\newcommand{\N}{\mathbb N}
\newcommand{\C}{\mathbb C}
\newcommand{\tg}[1]{\textbf{#1}}
\newcommand{\es}{\varnothing}
\newcommand{\lb}{\lbrack}
\newcommand{\rb}{\rbrack}
\newcommand{\s}[2]{\sum\limits_{#1}^{#2}}
\newcommand{\prods}[2]{\langle #1,#2\rangle}
\newcommand{\restr}[2]{{
		\left.\kern-\nulldelimiterspace 
		#1 
		\vphantom{\big|} 
		\right|_{#2} 
	}}
\newcommand{\fct}[4]{\qq:\qq #1\qq\longrightarrow\qq #2\qq:\qq #3\qq \mapsto\qq #4}
\newcommand{\q}{\quad}
\newcommand{\qq}{\mbox{ }}
\newcommand{\diff}{\text{\rm d}}
\newcommand{\Hr}[1]{\mathcal{H}_{#1}}
\newcommand{\Fix}{\text{\rm Fix}}
\def\restriction#1#2{\mathchoice
	{\setbox1\hbox{${\displaystyle #1}_{\scriptstyle #2}$}
		\restrictionaux{#1}{#2}}
	{\setbox1\hbox{${\textstyle #1}_{\scriptstyle #2}$}
		\restrictionaux{#1}{#2}}
	{\setbox1\hbox{${\scriptstyle #1}_{\scriptscriptstyle #2}$}
		\restrictionaux{#1}{#2}}
	{\setbox1\hbox{${\scriptscriptstyle #1}_{\scriptscriptstyle #2}$}
		\restrictionaux{#1}{#2}}}
\def\restrictionaux#1#2{{#1\,\smash{\vrule height 1.2\ht1 depth 1.0\dp1}}_{\,#2}} 
\theoremstyle{plain}
\newtheorem{theorem}{Theorem}
\newtheorem{proposition}[theorem]{Proposition}
\newtheorem{lemma}[theorem]{Lemma}
\newtheorem{theoremletter}{Theorem}
\theoremstyle{definition}
\newtheorem{definition}[theorem]{Definition}
\newtheorem{remark}[theorem]{Remark}
\theoremstyle{plain}
\newtheorem*{theorem*}{Theorem}
\newtheorem*{conjecture*}{Conjecture}
\newtheorem*{proposition*}{Proposition}
\newtheorem*{lemma*}{Lemma}
\newtheorem*{corollary*}{Corollary}
\theoremstyle{definition}
\newtheorem*{definition*}{Definition}
\newtheorem*{remark*}{Remark}
\newtheorem*{example*}{Example}
\numberwithin{theorem}{section}
\numberwithin{equation}{section}
\begin{document}

	\title{Radu groups acting on trees are CCR}
	\author{Lancelot Semal\footnote{
			F.R.S.-FNRS Research Fellow, email : lancelot.semal\MVAt uclouvain.be}\\
			\newline \\
		UCLouvain, 1348 Louvain-la-Neuve, Belgium}
	
	\maketitle
	
	\begin{abstract}
		We classify the irreducible unitary representations of closed simple groups of automorphisms of trees acting $2$-transitively on the boundary and whose local action at every vertex contains the alternating group. As an application, we confirm Claudio Nebbia's CCR conjecture on trees for $(d_0,d_1)$-semi-regular trees such that $d_0,d_1\in \Theta$, where $\Theta$ is an asymptotically dense set of positive integers. 
	\end{abstract}
	
	\section{Introduction}
	In this document topological groups are second-countable, locally compact groups are Hausdorff and the word ``representation" stands for strongly continuous unitary representation on a separable complex Hilbert space. A locally compact group  $G$ is called \textbf{CCR} if the operator $\pi(f)$ is compact for all irreducible representation $\pi$ of $G$ and all $f\in L^{1}(G)$. For totally disconnected locally compact groups this property is equivalent to ask that every irreducible representation of $G$ is admissible see \cite{Nebbia1999}. We recall that an irreducible representation $\pi$ of totally disconnected locally compact group $G$ is \textbf{admissible} if for every compact open subgroup $K \leq G$ the space $\Hr{\pi}^K$ of $K$-invariant vectors is finite dimensional. A very important property of CCR groups is that they are \textbf{type I} groups \cite[Definition 6.E.7. and Proposition 6.E.11]{BekkadelaHarpe2020}. Loosely speaking, type I groups are the locally compact groups all of whose representations can be written as a unique direct integral of irreducible representations, thus reducing the study of arbitrary  representations to considerations on irreducible representations. Concerning groups of automorphisms of trees, Nebbia's work highlighted surprising relations between the action on the boundary and the regularity of representation theory. To be more precise, he showed in \cite{Nebbia1999} that any closed unimodular CCR subgroup $G\leq \Aut(T)$ of the group of automorphisms of a regular tree $T$ necessarily acts transitively on the boundary $\partial T$ . Further progress going in that direction were recently achieved by Houdayer and Raum \cite{HoudayerRaum2019} and with higher level of generality by Caprace, Kalantar and Monod \cite{CapraceKalantarMonod2022}. Among other things, they showed that a closed non-amenable type I subgroup acting minimally on a locally finite tree $T$ acts $2$-transitively on the boundary $\partial T$ \cite[Corollary D]{CapraceKalantarMonod2022}. Going in the other direction, Nebbia conjectured in \cite{Nebbia1999} that any closed subgroup of automorphisms of a regular tree acting transitively on the boundary is CCR. His conjecture naturally extends to the case of semi-regular trees.
	\begin{conjecture*}[CCR conjecture on trees \cite{Nebbia1999}]
		Let $T$ be a $(d_0,d_1)$-semi-regular tree with $d_0,d_1\geq 3$ and let $G\leq \Aut(T)$ be a closed subgroup acting transitively the boundary of $T$. Then $G$ is CCR. 
	\end{conjecture*}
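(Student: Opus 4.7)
The plan is to split the conjecture along the amenable versus non-amenable dichotomy, using the structure theory of closed boundary-transitive subgroups of $\Aut(T)$. The first step is a reduction to the topologically simple case: boundary-transitivity forces the tree action to be minimal, so combining the Burger-Mozes normal subgroup theorem with its extension by Caprace-De Medts allows one to replace $G$ by a cocompact simple subquotient without affecting the CCR question.

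In the non-amenable case, I would first prove directly, without invoking type I as an input, that boundary-transitivity upgrades to boundary-$2$-transitivity. This should follow from a refinement of the Furstenberg-boundary argument underlying \cite[Corollary D]{CapraceKalantarMonod2022}, whose averaging step does not actually require the type I hypothesis once one combines it with the Howe-Moore-type mixing statements available for trees. Once $G$ is $2$-transitive on $\partial T$, an Iwasawa-type decomposition $G = KP$ becomes available, with $K$ a vertex stabilizer and $P$ a boundary point stabilizer, and irreducible representations of $G$ split into two families: those occurring as subquotients of principal series $\Ind_P^G \chi$ for a character $\chi$ of $P$, and supercuspidals. Admissibility of the former follows from Frobenius reciprocity together with the finite double coset structure $K \backslash G / P$; admissibility of the latter is handled by the main theorem of the present paper for Radu-type groups, by the Amann-Ciobotaru analysis for universal groups $U(F)$, and for the remaining $2$-transitive families by invoking Tits' independence property whenever it holds, and a seed-construction argument patterned on the present paper when it does not.

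The amenable case is essentially vacuous: a classical rigidity theorem for amenable subgroups of $\Aut(T)$ implies that any closed amenable subgroup must fix a vertex, the midpoint of an edge, an end of $T$, or a pair of ends. None of these stabilizers can act transitively on $\partial T$ when $T$ is $(d_0, d_1)$-semi-regular with $d_0, d_1 \geq 3$, so no amenable boundary-transitive closed subgroup exists and the conjecture reduces entirely to the non-amenable analysis above.

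The main obstacle lies in two interlocking difficulties. First, the classification of closed simple $2$-transitive subgroups of $\Aut(T)$ is not yet complete beyond the universal and Radu families: hybrid constructions and closed subgroups whose local action is $2$-transitive but not $2$-homogeneous remain poorly understood, and in those cases Tits' independence property can fail, invalidating the decomposition tools underlying the present paper. Second, a genuinely uniform admissibility criterion - one that handles all boundary-$2$-transitive closed subgroups at once, rather than through a list of families - would presumably require a tree-theoretic analogue of Jacquet's functor together with a Hecke-algebra formalism valid without the independence property. Developing this intrinsic framework, or alternatively completing the structural classification to reduce the conjecture to a finite list of families each tractable by the methods of this paper, is where I expect the real difficulty of a full proof to lie.
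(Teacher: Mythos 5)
The statement you are asked to prove is an open conjecture which the paper itself does not prove: the paper's contribution (Theorem B) confirms it only for $(d_0,d_1)$-semi-regular trees with $d_0,d_1\in\Theta$, precisely because for those degrees every closed boundary-$2$-transitive subgroup has local action containing the alternating group, i.e.\ is a Radu group, and the paper's classification of cuspidal representations of the simple groups $G^+_{(i)}(Y_0,Y_1)$ then yields uniform admissibility. Your proposal is a research programme rather than a proof, and you concede this yourself: the decisive step --- admissibility (or square-integrability) of the cuspidal representations of an \emph{arbitrary} closed subgroup acting $2$-transitively on $\partial T$, where the Tits independence property may fail and the local action need not contain $\Alt(d)$ --- is exactly the part you leave open under ``seed-construction argument patterned on the present paper''. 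The paper's Ol'shanskii-type factorization of the generic filtration $\mathcal{S}_0$ (Proposition \ref{Proposition la premiere etape de la fcatorization pour GY1Y_2} and Theorem \ref{Theorem S 0 est fertile }) rests crucially on the local sign conditions defining $G^+_{(i)}(Y_0,Y_1)$, so it does not transfer to groups outside the Radu family; without a substitute for that factorization your scheme does not close the gap, and the conjecture remains open.

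Several intermediate claims also need repair. The upgrade from boundary-transitivity to $2$-transitivity does not require any refinement of the Furstenberg-boundary argument of \cite{CapraceKalantarMonod2022}: for closed non-compact groups it is the elementary \cite[Lemma 3.1.1]{BurgerMozes2000}, which the paper cites. Your amenable case is not vacuous: a vertex stabilizer in $\Aut(T)$ is compact (hence amenable) and acts transitively on $\partial T$; the correct statement is that amenable boundary-transitive closed subgroups are compact and therefore trivially CCR, not that they do not exist. Finally, the reduction ``replace $G$ by a cocompact simple subquotient without affecting the CCR question'' is not automatic: the paper only transfers type I along open cocompact inclusions (via \cite{KallmanI1973}) and uniform admissibility along the finite index-$2$ chains supplied by Radu's classification (Lemma \ref{lemma G admissible iff  H is admissible}); for a general boundary-transitive $G$ one would need to justify why CCR for $G^{(\infty)}$ implies CCR for $G$.
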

	We recall from \cite[Lemma 3.1.1]{BurgerMozes2000} that, for a locally finite tree $T$, closed subgroups $G\leq \Aut(T)$ are non-compact and act transitively on the boundary $\partial T$ if and only if they act $2$-transitively on $\partial T$. Furthermore, the existence of such a group implies that the tree is semi-regular. In particular, since compact groups are automatically CCR we deduce that the hypothesis of semi-regularity is non-restrictive in the conjecture.
	
	One of the first evidence supporting the conjecture was provided by Bernstein and Harish-Chandra's works. Among other things, they proved that rank one semi-simple algebraic groups over local-fields are uniformly admissible \cite{Bernshtein1974}, \cite{Harish1970}. We recall that a totally disconnected locally compact group $G$ is \textbf{uniformly admissible} if for every compact open subgroup $K$, there exists a positive integer $k_K$ such that $\dim(\Hr{\pi}^K)<k_K$ for all irreducible representation $\pi$ of $G$. In particular, uniformly admissible groups are CCR. Concerning non-linear groups, the conjecture was supported by the complete classification of the irreducible representations of the full group of automorphisms of a semi-regular tree and more generally of closed subgroups acting transitively on the boundary and satisfying the Tits independence property  \cite{Olshanskii1977}, \cite{Olshanskii1980}, \cite{Choucroun1994}, \cite{FigaNebbia1991}, \cite{Amann2003} (those classifications lead to the conclusion that they are uniformly admissible \cite{Ciobotaru2015}). 
	
	Our paper concerns closed subgroups acting $2$-transitively on the boundary $\partial T$ and whose local action at every vertex $v$ contains the alternating group of corresponding degree. We recall that for each vertex $v\in V(T)$, the stabilizer $\Fix_G(v)$ of $v$ acts on the set $E(v)$ of edges containing $v$. The image of $\Fix_G(v)$ in $\Sym(E(v))$ for this natural projection map is called the \textbf{local action} of $G$ at $v$ and we denote this group by $\underline{G}(v)$. When the degree of each vertex is bigger than $6$, those groups of automorphisms of trees have been extensively studied and classified by Radu in \cite{Radu2017}. For that reason we call them \textbf{Radu groups}. It is not hard to realize that those groups are type I. Indeed, each Radu group $G$ contains a cocompact subgroup $H$ that is conjugate in $\Aut(T)$ to the semi-regular version of the universal group of Burger-Mozes $\Alt_{(i)}(T)^+$ see \cite[page 4.]{Radu2017}. Since $H$ is both open and cocompact in $G$, \cite[Theorem 1]{KallmanI1973} ensures that $G$ is type I if and only if $H$ is type I. On the other hand, when the degree of each vertex is bigger than $4$, $\Alt_{(i)}(T)^+$ acts transitively on the boundary and satisfies the Tits independence property. It follows from \cite{Amann2003} and \cite{Ciobotaru2015} that $H$ is a type I group which proves that every Radu group is Type I. The purpose of these notes is to go further. Inspired by Ol’shanskii’s work and the recent progress achieved in the abstraction of his framework \cite{Semal2021O}, we give a classification of the irreducible representations of simple Radu groups and deduce a description of the irreducible representations of any Radu groups. Among other things, this provides the following contribution to Nebbia's CCR conjecture on trees.  
	\begin{theoremletter}\label{Theorem A}
		Let $T$ be a $(d_0,d_1)$-semi-regular tree with $d_0,d_1\geq 6$. Then, Radu groups are uniformly admissible and hence CCR.
	\end{theoremletter}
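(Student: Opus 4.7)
The plan is to obtain Theorem \ref{Theorem A} as a corollary of a complete classification of the irreducible representations of Radu groups via Ol'shanskii's method, in the abstract form developed in \cite{Semal2021O}. Once the classification is in hand, uniform admissibility---and therefore the CCR property---follows by inspecting how the dimension of $K$-invariant subspaces depends on the classifying parameters.

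The first step is a reduction to simple Radu groups. By Radu's structural results, each Radu group $G$ contains a canonical open topologically simple subgroup $G^+$ of finite index, and uniform admissibility is preserved under such finite open extensions: for any irreducible representation $\pi$ of $G$ and any compact open subgroup $K\leq G$, Clifford theory decomposes $\pi|_{G^+}$ into at most $[G:G^+]$ irreducible summands of $G^+$, whence $\dim \Hr{\pi}^K \leq \dim \Hr{\pi}^{K\cap G^+} \leq [G:G^+]\cdot k_{K\cap G^+}$. It therefore suffices to treat simple Radu groups. For such a $G$, I would introduce a family of compact open subgroups $\{K_\Omega\}$ indexed by finite rooted subtrees $\Omega$ of $T$---concretely, fixators of ``half-trees based at an edge'' decorated with the local orientation or parity data dictated by Radu's classification---and verify the axioms of \cite{Semal2021O}. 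These axioms ensure that every irreducible representation admits a nonzero $K_\Omega$-fixed vector for some $\Omega$, and that the irreducible representations of $G$ are parameterized up to equivalence by pairs $(\Omega, \sigma)$ where $\sigma$ is an irreducible representation of an explicit finite group attached to $\Omega$. Such a classification yields uniform admissibility immediately, since any prescribed compact open $K\leq G$ contains some $K_\Omega$, only finitely many classifying data are compatible with $K$, and the dimension of $K$-invariants in each irreducible is controlled by a finite-group-theoretic constant depending only on $K$.

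The principal obstacle lies in the verification of the Ol'shanskii axioms across all of Radu's families. The universal Burger--Mozes group $\Alt_{(i)}(T)^+$ satisfies the Tits independence property and was handled by the techniques of \cite{Amann2003} and \cite{Ciobotaru2015}, but general Radu groups extend $\Alt_{(i)}(T)^+$ in a way that destroys independence on half-trees: the fixator of a half-tree no longer factors as a direct product of the pointwise fixators on either side. Overcoming this will require a careful local-to-global analysis of the fixators $\Fix_G(\Omega)$ using the combinatorial invariants introduced in \cite{Radu2017}, in order to recover a weaker, ``twisted'' form of independence still sufficient to satisfy the hypotheses of \cite{Semal2021O}. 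The degree hypothesis $d_0,d_1\geq 6$ enters precisely here, ensuring that the alternating local groups $\Alt(d_0)$ and $\Alt(d_1)$ are simple non-abelian---both required for Radu's classification to be available and for the mixing arguments that produce the needed $K_\Omega$-fixed vectors. Once this geometric input is in place, the final passage from uniform admissibility to CCR is automatic for totally disconnected locally compact groups.
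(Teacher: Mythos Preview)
Your reduction to simple Radu groups via Clifford theory is fine and matches the paper's Lemma \ref{lemma G admissible iff  H is admissible}. The genuine gap is in the next step: the Ol'shanskii machinery of \cite{Semal2021O} does \emph{not} parameterize all irreducible representations by pairs $(\Omega,\sigma)$ with $\sigma$ a representation of a finite group. The factorization axioms only operate at depths $l\geq 1$, and what they classify are precisely the \emph{cuspidal} representations---those with no nonzero $\Fix_G(e)$-invariant vector for any edge $e$. The representations at depth $0$ (spherical and special) are not captured by any such discrete parameterization: the spherical series of a boundary-$2$-transitive group is a genuine continuous family, indexed by an interval via the value of the spherical function on a generating translation (Theorems \ref{thm la classification des spherique cas trans} and \ref{thm la classification des spherique cas 2 orbites}). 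Hence your claim that ``only finitely many classifying data are compatible with $K$'' is false exactly for the spherical part, and the argument for uniform admissibility collapses there.

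The paper repairs this by treating the three types separately. For the cuspidal (and special) representations, the Ol'shanskii classification does yield that each is induced from a compact open subgroup, hence square-integrable; uniform admissibility for these then comes from Harish-Chandra's general bound \cite[Corollary of Theorem 2]{Harish1970} on $K$-invariants of discrete series, not from a finiteness count. For the spherical series, a completely different argument is needed: the paper exploits that $(G,\Fix_G(v))$ is a Gelfand pair and that the $K_n$-left-invariant, $K$-right-invariant matrix coefficients satisfy a recursion (equation \eqref{equation de semi sphericite}) forcing them to live in a finite-dimensional space whose dimension depends only on $n$, uniformly in $\pi$ (Theorem \ref{Theorem les representations spheriques sont admissibles}). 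Two smaller points: the subgroups the paper actually uses are fixators of balls $B_T(v,r)$ and $B_T(e,r)$, not half-trees; and the hypothesis $d_0,d_1\geq 6$ enters only through Radu's classification (to guarantee the finite-index reduction to some $G^+_{(i)}(Y_0,Y_1)$), whereas the factorization arguments themselves go through for $d_0,d_1\geq 4$.
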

	To put this result into the perspective of Radu's paper, we recall that the local action $\underline{G}(v)\leq \Sym(E(v))$ at every vertex $v\in T$ of a closed subgroup $G\leq \Aut(T)$ that is $2$-transitive on the boundary is a $2$-transitive subgroup of $\Sym(E(v))$ \cite[Lemma 3.1.1]{BurgerMozes2000}. On the other hand, \cite[Proposition B.1 and Corollary B.2]{Radu2017} ensure that
	\begin{equation*}
	\Theta= \{d \geq 6 \lvert \mbox{ each finite }\mbox{2-transitive subgroup of }\Sym(d)\mbox{ contains Alt}(d)\}
	\end{equation*}
	is asymptotically dense in $\N$ and its ten smallest elements are $34$,
	$35$, $39$, $45$, $46$, $51$, $52$, $55$, $56$ and $58$. All together, this implies the following.
	\begin{theoremletter}
		Nebbia's CCR conjecture on trees is confirmed for any $(d_0,d_1)$-semi-regular tree with $d_0,d_1\in \Theta$ where $\Theta$ is the asymptotically dense subset of $\N$ defined above.
	\end{theoremletter}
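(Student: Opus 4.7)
The plan is to derive Theorem B as an essentially formal consequence of Theorem A combined with the definition of $\Theta$ and the recalled Burger--Mozes lemma. Let $T$ be a $(d_0,d_1)$-semi-regular tree with $d_0,d_1\in\Theta$, and let $G\leq\Aut(T)$ be a closed subgroup acting transitively on $\partial T$. The goal is to verify that $G$ falls within the scope of Theorem A, i.e.\ that $G$ is a Radu group.

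First I would dispose of the compact case: if $G$ is compact, then $\pi(f)$ is a norm-limit of finite-rank operators for every $f\in L^1(G)$ and every irreducible $\pi$ (which is finite-dimensional by Peter--Weyl), hence $G$ is trivially CCR and there is nothing to prove. So assume $G$ is non-compact. By the recalled \cite[Lemma 3.1.1]{BurgerMozes2000}, $G$ then acts $2$-transitively on $\partial T$, and the same lemma supplies the key structural fact that the local action $\underline{G}(v)\leq\Sym(E(v))$ at every vertex $v\in V(T)$ is a $2$-transitive finite subgroup of $\Sym(E(v))$.

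Next I would invoke the definition of $\Theta$. Since $T$ is $(d_0,d_1)$-semi-regular, the set $E(v)$ has cardinality $d_0$ or $d_1$ depending on the bipartition class of $v$; both values lie in $\Theta$ by hypothesis. By the very definition of $\Theta$, every $2$-transitive subgroup of $\Sym(d_i)$ contains $\Alt(d_i)$, so $\underline{G}(v)\supseteq \Alt(E(v))$ for every $v\in V(T)$. This is precisely the hypothesis defining a Radu group in the sense of the paper. Moreover, since $\Theta\subseteq\{d\in\N: d\geq 6\}$, the degree hypothesis $d_0,d_1\geq 6$ of Theorem~A is satisfied.

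Finally, Theorem~A applies directly and yields that $G$ is uniformly admissible, hence CCR. The argument is therefore a short bookkeeping combination of the three ingredients; no genuine obstacle arises once Theorem~A has been established, and the only subtlety is remembering to handle the compact case separately so that the Burger--Mozes implication ``transitive on the boundary $\Rightarrow$ $2$-transitive on the boundary'' may be applied.
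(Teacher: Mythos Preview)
Your argument is correct and matches the paper's approach exactly: the paper also derives Theorem~B directly from Theorem~A by invoking \cite[Lemma~3.1.1]{BurgerMozes2000} to get $2$-transitivity of the local actions, then the definition of $\Theta$ to force $\underline{G}(v)\supseteq\Alt(E(v))$, with the compact case handled trivially beforehand. There is nothing to add.
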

	We now explain how we obtained a classification of the irreducible representations of simple Radu groups. We first recall that the irreducible representations of a closed automorphism group $G\leq \Aut(T)$ of a locally finite tree $T$ splits in three categories. An irreducible representation $\pi$ of $G$ is called:
	\begin{itemize}\label{definition de spheric special cuspidal}
		\item \tg{spherical} if there exists a vertex $v\in V(T)$ such that $\pi$ admits a non-zero $\Fix_G(v)$-invariant vector where $\Fix_G(v)=\{g\in G\lvert gv=v\}$.
		\item \tg{special} if it is not spherical and there exists an edge $e\in E(T)$ such that $\pi$ admits a non-zero $\Fix_G(e)$-invariant vector where $\Fix_G(e)=\{g\in G\lvert gv=v\qq\forall v\in e\}$ is the fixator of the edge $e$.
		\item \tg{cuspidal} if it is neither spherical nor special.
	\end{itemize}
	The spherical and special representations are classified since the end of the 70's at the level of generality of the conjecture that is for any closed non-compact subgroup $G\leq \Aut(T)$ acting transitively on the boundary of the tree see \cite{Matsumoto}, \cite{Olshanskii1977} and \cite{Olshanskii1980}. Furthermore, we recall that Matsumoto's work emphasises a strong connection between those kinds of representations and the irreducible representations of Hecke algebras. To be more precise, we recall that a group acting $2$-transitively on the boundary is either type-preserving or admits an index $2$ closed type-preserving subgroup acting $2$-transitively on the boundary. Since \cite[Corollary 3.6.]{CapraceCiobotaru2015} ensures that every such group $G$ comes from a B-N pair, each spherical or special representation of $G$ defines an irreducible representation of the associated Hecke algebra $C_c(B\backslash G\slash B)$ of continuous compactly supported $B$-bi-invariant functions $f:G\rightarrow \C$ where $B=\Fix_G(e)$ is the pointwise fixator of an edge $e\in E(T)$. Matsumoto's works enlightened the fact that this correspondence is actually bijective see {\cite[Chapter 5, Section 6]{Matsumoto}}.
	
	The cuspidal representations on the other hand are not classified at the level of generality of the conjecture. Nevertheless, a complete classification of those representations was achieved for certain families of groups. Among non-linear groups for instance, Ol’shanskii's work lead to a classification of the cuspidal representations for any closed group of automorphisms of a semi-regular tree satisfying the Tits independence property see \cite{Olshanskii1977}, \cite{Amann2003}. 
	The main idea leading to this classification was to exploit the independence of the action on the tree to obtain a particular factorization on a well chosen basis of neighbourhood of the identity made by compact open subgroups. When it comes to Radu groups, \cite{Radu2017} highlighted the fact that those groups are defined by local conditions. Among other things, when $T$ is a $(d_0,d_1)$-semi regular tree with $d_0,d_1\geq 4$, Radu introduced a family of groups $G_{(i)}(Y_0,Y_1)$ indexed by two finite subsets $Y_0,Y_1\subseteq \N$ see Definition \ref{definition de GiXY} below. Furthermore, he showed that those groups are abstractly simple and that they exhaust the list of simple Radu groups when $d_0,d_1\geq 6$. Our paper takes advantage of the recent abstraction of Ol'shanskii's framework developed in \cite{Semal2021O} and the description of those groups provided by Radu to obtain a classification their cuspidal representations see Section \ref{Section classification des rep de simple radu groups}. The author would like to underline that an application of Ol'shanskii's machinery on Radu groups already exists in \cite[Section 4]{Semal2021O} since they satisfy a generalisation of the Tits independence property (the property ${\rm IP} _k$ defined in \cite{BanksElderWillis2015}). However, unless the property ${\rm IP} _k$ coincides with the Tits independence property ($k=1$), this approach never leads to a classification of all cuspidal representation of the group. However, the approach considered in the present paper relies on the independence provided by local conditions rather than the property ${\rm IP}_k$. In particular, by contrast with the approach developed in \cite{Semal2021O}, Section \ref{Section classification des rep de simple radu groups} below leads to a description of every cuspidal representation of the groups $G_{(i)}(Y_0,Y_1)$. 
	\begin{theoremletter}\label{theorem C}
		In a $(d_0,d_1)$-semi-regular tree with $d_0,d_1\geq 4$, the cuspidal representations of $G_{(i)}(Y_0,Y_1)$ are in bijective correspondence given by induction with a family of irreducible representations of compact open subgroups. This correspondence is explicitly described by $\lb$Theorem \ref{la classification des reresentations cuspidale}, Section \ref{Section classification des rep de simple radu groups}$\rb$.
	\end{theoremletter}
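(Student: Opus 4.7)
The plan is to apply the Ol'shanskii-type machinery abstracted in \cite{Semal2021O} to a well-chosen family of compact open subgroups of $G_{(i)}(Y_0,Y_1)$ reflecting Radu's local conditions. Following the general blueprint, I would extract from each cuspidal representation a canonical \emph{seed}, namely an irreducible representation of a compact open subgroup $K$, and conversely induce such seeds to recover the cuspidal representation; Theorem \ref{la classification des reresentations cuspidale} would then record that the two operations are mutually inverse and match equivalence classes on both sides.

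The first step is to single out the relevant family of compact open subgroups. Fixing a base edge $e\in E(T)$, the naive candidates are the pointwise fixators of balls $B(e,n)$, but these are too coarse because the constraints $Y_0,Y_1$ couple the local actions at adjacent vertices. I would therefore enrich the family by indexing subgroups not only by a radius $n$ but also by a choice of ``boundary decoration'' on the sphere of radius $n$ that records, at each boundary vertex, the admissible local behaviour compatible with $Y_0,Y_1$. Concretely, each such subgroup is built by combining the fixator of a ball with a local rule on the first shell past the ball, and one has to verify that this family forms a neighbourhood basis of the identity made of compact open subgroups.

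Second, I would verify that this enriched family satisfies the factorization axioms required by the abstract framework of \cite{Semal2021O}: when one of these subgroups $K$ is large enough, it decomposes as an essentially independent product of its ``pieces'' attached to disjoint subtrees past the boundary sphere, any residual coupling being absorbed into a compact core where $Y_0,Y_1$ are enforced. Once this is in hand, the abstract induction theorem yields, for every irreducible representation $\sigma$ of such a $K$ that does not extend to any strictly larger member of the family, an irreducible cuspidal representation $\Ind_K^{G}\sigma$ of $G_{(i)}(Y_0,Y_1)$, and moreover shows that non-equivalent seeds produce non-equivalent induced representations. Conversely, given a cuspidal $\pi$, one extracts a seed by considering the smallest subgroup in the family for which $\Hr{\pi}^{K}\neq 0$ and showing that $K$ acts irreducibly on a canonical subspace of $\Hr{\pi}^{K}$ using the factorization.

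The main obstacle will be the factorization step: unlike in the Tits independence setting exploited by \cite{Olshanskii1977,Amann2003}, the subtrees hanging off a vertex of $T$ cannot be moved independently by $G_{(i)}(Y_0,Y_1)$, and the approach via the property ${\rm IP}_k$ of \cite{BanksElderWillis2015,Semal2021O} is known to fall short of capturing all cuspidal representations. The design of the enriched basis must therefore be tight enough that the boundary decoration exactly neutralises the coupling introduced by $Y_0,Y_1$, so that genuine independence is recovered past a compact core. Carrying this out will rely crucially on Radu's explicit local description of $G_{(i)}(Y_0,Y_1)$ and will constitute the technical heart of Section \ref{Section classification des rep de simple radu groups}.
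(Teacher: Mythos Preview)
Your overall blueprint---extract a seed from each cuspidal representation via the machinery of \cite{Semal2021O} and induce back---is exactly what the paper does. But you misjudge the choice of filtration, and this leads you to propose something unnecessarily elaborate.

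You dismiss the fixators of balls $B_T(e,n)$ and $B_T(v,n)$ as ``too coarse'' because the constraints $Y_0,Y_1$ couple local actions, and you propose to enrich the family with boundary decorations. The paper does \emph{not} do this: it works with the plain filtration
\[
\mathcal{S}_0=\{\Fix_G(\mathcal{T})\mid \mathcal{T}\in\mathfrak{T}_0\},\quad \mathfrak{T}_0=\{B_T(v,r)\}\sqcup\{B_T(e,r)\},
\]
and shows directly that $\mathcal{S}_0$ factorizes$^+$ at every depth $l\geq 1$. The coupling you worry about is real, but the paper absorbs it in the \emph{proof} of factorization rather than in the \emph{definition} of the filtration. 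Concretely (Proposition \ref{Proposition la premiere etape de la fcatorization pour GY1Y_2} and Lemmas \ref{Lemme Omega 0 est non vide}, \ref{Lemme Omega n est non vide}): given $\alpha\in W=\Fix_G(\mathcal{T}_W)$ one constructs $\alpha_0\in U=\Fix_G(\mathcal{T})$ agreeing with $\alpha$ on $\mathcal{T}'$ by starting from any extension in $\Aut(T)^+$ and then \emph{correcting} the sign conditions $\Sgn_{(i)}(\cdot,S_{Y_t}(v))$ one shell at a time, using auxiliary automorphisms $h_{(v)}$ with prescribed odd local action at a single vertex far enough from $\gamma$. A compactness argument on a descending chain $\Omega_n$ of such partially-corrected automorphisms produces the desired $\alpha_0\in G^+_{(i)}(Y_0,Y_1)$.

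So your plan is not wrong in spirit, but the ``enriched basis with boundary decorations'' is a detour: the naive filtration already works, and the technical heart is the sign-correction compactness argument, not the design of a more refined family of subgroups.
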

	Among other things, this proves the cuspidal representations are induced from compact open subgroups and therefore square-integrable. Since \cite[Corollary of Theorem 2]{Harish1970} ensures, for every compact open subgroup $K\leq G$, the existence a positive integer $k_K$ such that $\dim(\Hr{\pi}^K)\leq k_K$ for all square-integrable representations $\pi$ of $G$ and as a consequence of the classification of the spherical representations (see Section \ref{Section spherical and special rep}) this leads to the conclusion that the groups $G_{(i)}(Y_0,Y_1)$ are uniformly admissible (see Section \ref{section radu groups are Type I}). On the other hand, when $d_0,d_1\geq 6$ Radu's classification ensure that every Radu group $G$ belongs to a finite chain $H_n\geq... \geq H_0$ with $n\in \{0,1,2,3\}$ such that $H_n=G$, $\lb H_t: H_{t-1} \rb =2$ for all $t$ and $H_0$ is conjugate in the group of type-preserving automorphisms $\Aut(T)^+$ to one of those $G_{(i)}^+(Y_0,Y_1)$. Since Mackey's machinery allows one to describe the irreducible representation of a locally compact $G$ in terms of the irreducible representations of any of its closed subgroup $H$ of index $2$ (see Appendix \ref{section irreducvtible pour des groupes d'indice 2}), this leads to a description of the cuspidal representations of any other Radu group. Furthermore, this also shows that every other Radu group is uniformly admissible see Lemma \ref{lemma G admissible iff  H is admissible}.

	\subsection*{Structure of the paper}
	In Section \ref{Section spherical and special rep} we recall the classification of spherical and special representations of any closed non-compact subgroup $G\leq \Aut(T)$ acting transitively one the boundary see \cite{Matsumoto}, \cite{Olshanskii1977} and \cite{Olshanskii1980}. The purpose of Section \ref{section Radu groups} is to recall Radu's classification of Radu groups \cite{Radu2017} and the definition of the $G_{(i)}(Y_0,Y_1)$. In Section \ref{Section classification des rep de simple radu groups}, we recall the notion of \textbf{Ol'shanskii's factorization} developed in \cite{Semal2021O} and obtain a classification of the cuspidal representations of the $G_{(i)}(Y_0,Y_1)$. The complete classification of the irreducible representations of $G_{(i)}(Y_0,Y_1)$ resulting from Sections \ref{Section spherical and special rep} and \ref{Section classification des rep de simple radu groups} is then used in Section \ref{section radu groups are Type I} to prove  uniform admissibility. Finally, the purpose of Appendix \ref{section irreducvtible pour des groupes d'indice 2} is to recall the procedure allowing one to describe the irreducible representation of a locally compact $G$ in terms of the irreducible representations of any of its closed subgroup $H$ of index $2$. In particular, this appendix provides a way to obtain the irreducible representations of any Radu groups from the irreducible representations of the abstractly simple Radu groups $G_{(i)}(Y_0,Y_1)$ and shows that other Radu groups are also uniformly admissible. 
	\subsection*{Acknowledgements}
	I warmly thank Pierre-Emmanuel Caprace for all the insightful discussions we shared and for his comments on preliminary versions of this paper. 
	\section{Spherical and special representations}\label{Section spherical and special rep}
	Let $T$ be a $(d_0,d_1)$-semi-regular tree with $d_0,d_1\geq 3$. We recall that a tree $T$ is called $(d_0,d_1)$-\tg{semi-regular} if there exists a bipartition $V(T)=V_0\sqcup V_1$ of $T$ such that each vertex of $V_i$ has degree $d_i$ and every edge of $T$ contains exactly one vertex in each $V_i$. As explained in the introduction, the irreducible representations of any closed subgroup $G\leq \Aut(T)$ of the group of automorphisms of such a tree splits in three categories. Those representations are either  \tg{spherical}, \tg{special} or \tg{cuspidal}. The purpose of the present section is to recall the classification of spherical and special representations of any closed non-compact subgroup $G\leq \Aut(T)$ acting transitively on the boundary (this covers every Radu group). This classification is a classical result known since the end of the 70's and we claim no originality. Furthermore, we refer to \cite{Matsumoto}, \cite{Olshanskii1977} and \cite{FigaNebbia1991} for details. 
	
	The details of this classification are gathered in Theorems \ref{thm la classification des spherique cas trans}, \ref{thm la classification des spherique cas 2 orbites}, \ref{thm la classification des speciale} below. We now recall preliminaries required for the statement of those results. Given a locally compact group $G$ and a compact subgroup $K\leq G$, we say that $(G, K)$ is a \textbf{Gelfand pair} if the convolution algebra $C_c(K \backslash  G \slash K)$ of compactly supported, continuous $K$-bi-invariant functions on $G$ is commutative. Now, let $(G,K)$ be a Gelfand pair and let $\mu$ be the left-Haar measure of $G$ renormalised in such a way that $\mu(K)=1$. A function $\varphi: G\rightarrow \C$ is called $K$\textbf{-spherical} if it is a $K$-bi-invariant continuous function with $\varphi(1_G)=1$ and $$\int_{K}\varphi(gkg')\qq \diff \mu(k)=\varphi(g)\varphi(g') \q \forall g,g'\in G.$$ 
	The interests of those notions lay in the following result.
	\begin{theorem}[{\cite[Chapter IV. \textsection{3}, Theorem $3$ and $9$]{Serge1985}}]
		Let $(G,K)$ be a Gelfand pair. For every irreducible representation $\pi$ of $G$ we have that $\dim(\Hr{\pi}^K)\leq 1$. Furthermore, there is a bijective correspondence $\pi \rightarrow \varphi_\pi$ with inverse map given by the GNS construction between the equivalence classes of irreducible representations of $G$ with non-zero $K$-invariant vectors and the $K$-spherical functions of positive type on $G$ (the function $\varphi_\pi$ is the function $\varphi_{\pi}(g)=\prods{\pi(g)\xi}{\xi}$ corresponding to any unit vector $\xi \in \Hr{\pi}^K$).
	\end{theorem}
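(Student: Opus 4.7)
The plan is to establish the dimension bound first, and then construct the bijection using matrix coefficients in one direction and the GNS construction in the other.

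First, I would set $P = \pi(\mathbf{1}_K)$, which is the orthogonal projection from $\Hr{\pi}$ onto $\Hr{\pi}^K$ (this uses $\mu(K)=1$ and the invariance of the Haar measure under inversion on $K$). For any $f \in C_c(K \backslash G / K)$, the operator $\pi(f)$ preserves $\Hr{\pi}^K$, so $f \mapsto \pi(f)|_{\Hr{\pi}^K}$ defines a $*$-representation of the commutative algebra $C_c(K \backslash G / K)$ on $\Hr{\pi}^K$. I would argue that this restricted representation is irreducible: if $V \subseteq \Hr{\pi}^K$ is a closed invariant subspace and $w \in V^{\perp} \cap \Hr{\pi}^K$, then for any $v \in V$ the $K$-bi-invariant matrix coefficient $g \mapsto \prods{\pi(g)v}{w}$ satisfies $\int f(g)\prods{\pi(g)v}{w}\,d\mu(g) = \prods{\pi(f)v}{w} = 0$ for every $f \in C_c(K \backslash G / K)$, which forces the coefficient to vanish identically. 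The irreducibility of $\pi$ then implies $w = 0$. Since a commutative $*$-algebra acting irreducibly on a Hilbert space can only do so on a space of dimension at most one, it follows that $\dim \Hr{\pi}^K \leq 1$.

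For the correspondence, given an irreducible representation $\pi$ of $G$ with $\Hr{\pi}^K \neq 0$, I would pick a unit vector $\xi \in \Hr{\pi}^K$ (unique up to a unimodular scalar) and set $\varphi_\pi(g) = \prods{\pi(g)\xi}{\xi}$. The $K$-bi-invariance, the equality $\varphi_\pi(1_G) = 1$ and the positivity of type are immediate, while the spherical function equation follows from the computation
\begin{equation*}
\int_K \varphi_\pi(gkg')\,d\mu(k) = \prods{\pi(g)P\pi(g')\xi}{\xi} = \varphi_\pi(g)\varphi_\pi(g'),
\end{equation*}
where the last step uses that $\Hr{\pi}^K = \C\xi$ forces $P\pi(g')\xi = \varphi_\pi(g')\xi$.

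In the opposite direction, given a $K$-spherical function of positive type $\varphi$, I would invoke the GNS construction to obtain a cyclic representation $(\pi_\varphi, \Hr{\varphi}, \xi_\varphi)$ satisfying $\varphi(g) = \prods{\pi_\varphi(g)\xi_\varphi}{\xi_\varphi}$, and the $K$-bi-invariance of $\varphi$ makes $\xi_\varphi$ a $K$-invariant vector. The main obstacle, which I expect to be the most delicate step, is to prove irreducibility of $\pi_\varphi$. My plan is to use the spherical function equation in the GNS model to deduce that $P\pi_\varphi(g')\xi_\varphi = \varphi(g')\xi_\varphi$ for every $g' \in G$ (testing both sides against $\pi_\varphi(g)\xi_\varphi$ and invoking cyclicity), and then combine this with cyclicity of $\xi_\varphi$ to conclude $\Hr{\varphi}^K = \C\xi_\varphi$; any bounded operator commuting with $\pi_\varphi(G)$ will then send $\xi_\varphi$ into $\Hr{\varphi}^K$ and hence act as a scalar on $\xi_\varphi$, which propagates to all of $\Hr{\varphi}$ by cyclicity. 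Combined with the uniqueness of $\xi$ up to a unimodular phase, this shows that the two constructions are mutually inverse and yields the claimed bijection.
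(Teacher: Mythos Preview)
The paper does not supply its own proof of this theorem: the statement is quoted from \cite{Serge1985} and used as a black box, so there is nothing in the paper to compare your argument against. Your proposal follows the standard route (projection onto $\Hr{\pi}^K$, irreducibility of the induced $C_c(K\backslash G/K)$-module to get the dimension bound, GNS for the inverse map, and the spherical functional equation to recover irreducibility and one-dimensionality of the $K$-fixed space), and the steps you outline are correct.
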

	We are finally ready to recall the details of the classification of spherical and special representations for any non-compact closed subgroups $G\leq\Aut(T)$ acting transitively on the boundary of $T$. We recall that those groups act transitively on the edges of $T$ and have therefore either one or two orbits of vertices. We treat those cases separately. 

	\begin{theorem}[{\cite[Chapter II]{Nebbia1999}}]\label{thm la classification des spherique cas trans}
		Let $T$ be a $d$-regular tree, let $v\in V(T)$ and let $G\leq \Aut(T)$ be a closed non-compact subgroup acting transitively on the vertices of $T$ and the boundary $\partial T$. Then, $(G, \Fix_G(v))$ is a Gelfand pair and every spherical representation of $G$ admits a non-zero $\Fix_G(v)$-invariant vector. Furthermore, the equivalence classes of spherical representations of $G$ are in bijective correspondence with the interval $\lb -1; 1\rb$ via the map $\phi_v:\pi\mapsto \varphi_\pi(\tau_v)$ where $\tau_v$ is any element of $G$ such that $d(\tau_v v,v)=1$ and $\varphi_\pi$ is the unique $\Fix_G(v)$-spherical function of positive type attached $\pi$. Under this correspondence, the trivial representation corresponds to $1$.
	\end{theorem}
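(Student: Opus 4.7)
The plan is to prove the three assertions in turn, setting $K := \Fix_G(v)$ throughout and relying on the $2$-transitivity of $G$ on $\partial T$ together with the quoted Gelfand pair theorem.

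\textbf{Gelfand pair and $K$-invariance.} I would first show that $K$ acts transitively on every sphere $S_n(v):=\{w\in V(T):d(w,v)=n\}$, by induction on $n$: the $2$-transitivity of $G$ on $\partial T$ implies in particular transitivity of $K$ on $\partial T$ and hence on the set of geodesic rays based at $v$, from which transitivity on each $S_n(v)$ follows by truncation. This identifies $K\backslash G\slash K$ with $\N$ via $g\mapsto d(gv,v)$. Since $d(g^{-1}v,v)=d(v,gv)$, each double coset is self-inverse, so the anti-involution $f^*(g):=\overline{f(g^{-1})}$ acts trivially on $C_c(K\backslash G\slash K)$, forcing the convolution algebra to be commutative (Gelfand's trick). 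Hence $(G,K)$ is a Gelfand pair. The second assertion is immediate from vertex-transitivity: given $\eta\neq 0$ fixed by $\Fix_G(w)$, choose $g\in G$ with $gv=w$; then $\pi(g)^{-1}\eta$ is $K$-fixed.

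\textbf{Recursion and injectivity of $\phi_v$.} By the quoted Gelfand pair theorem, spherical representations correspond bijectively with $K$-spherical functions of positive type $\varphi$. Each such $\varphi$ is $K$-bi-invariant and therefore only depends on $n:=d(gv,v)$; write $\varphi_n:=\varphi(g)$. Applying the functional equation $\int_K \varphi(gkg')\diff\mu(k)=\varphi(g)\varphi(g')$ with $g=\tau_v$, $g'$ chosen with $d(g'v,v)=n$, and computing the left-hand side by counting the proportion of $k\in K$ for which $d(\tau_v k g'v,v)$ equals $n+1$ or $n-1$, yields the three-term recurrence
\begin{equation*}
d\,\varphi_1\varphi_n \;=\; \varphi_{n-1}+(d-1)\varphi_{n+1}\qquad (n\geq 1),
\end{equation*}
together with $\varphi_0=1$. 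Thus $z:=\varphi_1=\varphi_\pi(\tau_v)$ uniquely determines $\varphi$, so $\phi_v$ is injective. Positivity of type forces $|\varphi(g)|\leq\varphi(1_G)=1$, giving $z\in[-1,1]$; and the trivial representation has $\varphi\equiv 1$, so $z=1$ in that case.

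\textbf{Surjectivity onto $[-1,1]$.} For each $z\in[-1,1]$ I would construct a matching positive-type spherical function via the classical boundary-realization. Set $q:=d-1$ and parametrize $z=(q^s+q^{1-s})/(q+1)$; build a $1$-parameter family of representations on $L^2(\partial T,\nu)$, where $\nu$ is the unique $K$-invariant probability measure on $\partial T$, whose matrix coefficients at a distinguished $K$-fixed vector produce $\varphi$. Values $|z|\leq 2\sqrt{q}/(q+1)$ are covered by the principal series ($s$ purely imaginary, unitarity from spectral theory on $\partial T$); the remaining values by the complementary series ($s$ real in a critical strip, unitarity by an explicit inner product); the endpoints $z=\pm 1$ correspond to the trivial and sign representations.

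\textbf{Main obstacle.} The only technically delicate step is positivity of type throughout the complementary-series range: the principal series is handled by spectral theory on the boundary, but the complementary range requires a careful positivity estimate on the associated intertwining kernel. This is the classical content of \cite{FigaNebbia1991}; everything else follows from $2$-transitivity on $\partial T$ and basic Gelfand pair theory.
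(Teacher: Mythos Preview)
The paper does not give its own proof of this statement: it is quoted as a classical result, with explicit references to \cite{Matsumoto}, \cite{Olshanskii1977}, and \cite{FigaNebbia1991} for details. Your sketch is correct and is precisely the standard argument found in those sources (especially Fig\`a-Talamanca--Nebbia): Gelfand's involution trick for commutativity of $C_c(K\backslash G/K)$ via self-inverse double cosets, the three-term recurrence $d\,\varphi_1\varphi_n=\varphi_{n-1}+(d-1)\varphi_{n+1}$ for injectivity of $\phi_v$, and the principal/complementary series construction on the boundary for surjectivity. You have also correctly isolated the one genuinely delicate point, namely positivity of the intertwining kernel in the complementary range; everything else is soft. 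One small remark: the ``sign'' character at $z=-1$ is the homomorphism $g\mapsto(-1)^{d(gv,v)}$, which is well defined for any vertex-transitive $G$ since distances in a tree satisfy $d(x,z)\equiv d(x,y)+d(y,z)\pmod 2$; it may be worth saying this explicitly rather than calling it the sign representation.
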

	The following theorem is obtained from \cite{Matsumoto} but is formulated differently for coherence of our expository. 
	\begin{theorem}[{\cite[Chapter 5, Section 6]{Matsumoto}}]\label{thm la classification des spherique cas 2 orbites}
		Let $T$ be a $(d_0,d_1)$-semi-regular tree with $d_0,d_1\geq 3$, let $v\in V(T)$, let $v'$ be any vertex at distance one from $v$ and let $G\leq \Aut(T)$ be a closed non-compact subgroup of type-preserving automorphisms acting transitively on the boundary $\partial T$. Then, there is exactly one spherical representation $\pi_v$ of $G$ with a non-zero $\Fix_G(v)$-invariant vector but no non-zero $\Fix_G(v')$-invariant vector. Furthermore, $(G, \Fix_G(v))$ is a Gelfand pair and apart from the two exceptional representations $\pi_v$ and $\pi_{v'}$, every spherical representation of $G$ admits, for all $w\in V(T)$, a non-zero $\Fix_G(w)$-invariant vector. In addition, if $v'$ has degree $d'$, the equivalence classes of spherical representations admitting a non-zero $\Fix_G(v)$-invariant vector are in bijective correspondence with the interval $\big\lb -\frac{1}{d'-1};1\big\rb$ via the map $\phi_v:\pi \mapsto \varphi_\pi(\tau_v)$ where $\tau_v$ is an element of $G$ such that $d(\tau_v v,v)=2$. Under this correspondence, the exceptional spherical representation $\pi_v$ corresponds to $-\frac{1}{d'-1}$ and the trivial representation corresponds to $1$. Finally, if $\pi$ is a non-exceptional spherical representation of $G$ we have that 
		$$\phi_{v'}(\pi)=\frac{d(d'-1)}{d'(d-1)}\phi_v(\pi) + \frac{d-d'}{d'(d-1)}.$$ 
	\end{theorem}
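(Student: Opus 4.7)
The plan is to follow Matsumoto's approach via the Hecke algebra $H(G,K_v) := C_c(K_v \backslash G / K_v)$, where $K_v := \Fix_G(v)$. First I would establish that $(G, K_v)$ is a Gelfand pair. Because $G$ is type-preserving and $2$-transitive on $\partial T$, standard arguments (see \cite[Lemma 3.1.1]{BurgerMozes2000}) imply that $K_v$ acts transitively on each sphere $S_v(n) := \{w \in V(T) : d(v,w) = n\}$ and that $G \cdot v$ consists exactly of the vertices of the same type as $v$. Hence double cosets in $K_v \backslash G / K_v$ correspond bijectively to even spheres $S_v(2n)$, $n \geq 0$. Since $gv$ and $g^{-1}v$ always lie in a common sphere, every double coset is self-inverse, so the antiautomorphism $g \mapsto g^{-1}$ implements the Gelfand trick and establishes commutativity of $H(G,K_v)$. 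The theorem quoted just above then gives $\dim \Hr{\pi}^{K_v} \leq 1$ for every irreducible $\pi$, and the bijection with $K_v$-spherical functions of positive type.

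Next I would parametrize these spherical functions. For each $n \geq 0$, pick $\tau_n \in G$ with $d(v, \tau_n v) = 2n$ and write $\tau_v := \tau_1$; a $K_v$-spherical function $\varphi$ is then determined by the sequence $\varphi_n := \varphi(\tau_n)$. Substituting $g = \tau_n$ and $g' = \tau_v$ into the defining convolution identity and evaluating the integral via the transitivity of $K_v$ on $S_v(2)$ yields a three-term recurrence of the form
$$\alpha \varphi_{n+1} + \beta \varphi_n + \gamma \varphi_{n-1} = z \varphi_n, \quad z := \varphi_1,$$
whose coefficients depend only on $d_0, d_1$. This produces a one-parameter family of $K_v$-spherical functions indexed by $z \in \C$, and a standard analysis of boundedness of its solutions pins down the positive-type subfamily as the interval $[-1/(d'-1), 1]$, with the trivial representation corresponding to $z = 1$.

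I would then treat the exceptional representations and the conversion formula. At the endpoint $z = -1/(d'-1)$, an averaging argument over $K_{v'}/B$ (with $B := \Fix_G(e)$, $e = \{v, v'\}$) shows that the projection of the unit $K_v$-invariant vector onto $\Hr{\pi_v}^{K_{v'}}$ vanishes; consequently $\pi_v$ admits no non-zero $K_{v'}$-invariant vector, and uniqueness of $\pi_v$ follows from the bijection in the previous step. A symmetric analysis at $v'$ produces $\pi_{v'}$. For every other spherical $\pi$, the averaging argument yields simultaneous $K_v$- and $K_{v'}$-invariant vectors; iterating, together with $G$-transitivity on each $V_i$, gives a $K_w$-invariant vector for every $w \in V(T)$. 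Finally, writing $d$ for the degree of $v$, the affine conversion formula between $\phi_v(\pi)$ and $\phi_{v'}(\pi)$ follows from expressing $\tau_{v'}$ in terms of $\tau_v$ and averaging over $K_v$- and $K_{v'}$-orbits; the exact coefficients $d(d'-1)/[d'(d-1)]$ and $(d-d')/[d'(d-1)]$ arise from the intersection cardinalities of these orbits on $S_v(2)$ and $S_{v'}(2)$.

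The main obstacle is the combinatorial bookkeeping: computing the recurrence coefficients $\alpha, \beta, \gamma$ and the exact constants in the conversion formula requires careful counts of vertices at each distance in the semi-regular tree. Matsumoto's treatment in \cite[Chapter 5, Section 6]{Matsumoto} circumvents part of this difficulty by working through the $BN$-pair presentation of $H(G, B)$, whose two quadratic Iwahori generators make the subalgebra $H(G, K_v) \subseteq H(G, B)$ transparent and the spectral analysis tractable; reconstructing the precise formulas without this intermediate structure would be considerably more delicate.
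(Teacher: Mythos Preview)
The paper does not prove this statement at all: it is quoted from \cite[Chapter 5, Section 6]{Matsumoto} with the remark that it has been ``formulated differently for coherence of our expository,'' and no argument is supplied. So there is no paper-side proof to compare against.

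Your sketch is a faithful outline of the standard route (and of Matsumoto's treatment): the Gelfand trick via self-inverse double cosets indexed by even radii, the three-term recurrence for $\varphi_n$ obtained from the spherical functional equation, the positive-definiteness analysis pinning down the interval, and the averaging over $K_{v'}/B$ to isolate the exceptional endpoint. You also correctly identify where the honest work lies, namely the combinatorial determination of the recurrence coefficients and of the conversion constants, and you note that Matsumoto's $BN$-pair presentation of $H(G,B)$ with its two Iwahori generators is what makes these computations clean. One small caution: the claim that the positive-type subfamily is exactly $[-1/(d'-1),1]$ does not follow from boundedness alone; one needs either the explicit eigenfunction expansion of $\varphi$ on $\partial T$ (the Harish-Chandra $c$-function analysis) or Matsumoto's classification of the unitary principal and complementary series for the affine Hecke algebra. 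Your sketch gestures at ``a standard analysis'' here, but that step is the substantive spectral input and would need to be spelled out in a full proof.
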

	To describe the special representations, let $T$ be a $(d_0,d_1)$-semi-regular tree with $d_0,d_1\geq 3$, let $e\in E(T)$ and let $G\leq \Aut(T)$ be a closed subgroup acting transitively on the edges of $T$. We define $\mathcal{L}(e)$ as the subspace of $\Fix_{G}(e)$-right invariant square-integrable functions $\varphi : G \rightarrow \C$ satisfying $$\int_{\Fix_G(v)}\varphi(gk)\qq \diff \mu(k)=0\q \forall g\in G, \forall v\in e.$$ 
	Notice that $\mathcal{L}(e)$ is a closed left invariant subspace of $L^2(G)$ and let $\sigma:G\rightarrow \mathcal{U}(\mathcal{L}(e))$ be the unitary representation of $G$ defined by $\sigma(t)\varphi(g)=\varphi(t^{-1}g)$ $\forall g,t\in G$, $\forall \varphi\in \mathcal{L}(e)$. If $G$ is transitive on the vertices of $T$, we choose an  inversion  $h\in G$ of the edge $e$ and consider the map $\nu:\mathcal{L}(e) \rightarrow \mathcal{L}(e)$ defined by $\nu(\varphi)(g)=\varphi(gh)$ $\forall \varphi\in \mathcal{L}(e)$, $\forall g\in G$. This map is well defined since for all $\varphi\in \mathcal{L}(e)$, for all $g\in G$ and every $v\in e$ we have 
	\begin{equation*}
	\begin{split}
	\int_{\Fix_G(v)} (\nu\varphi)(gk)\diff \mu(k)&=\int_{\Fix_G(v)} \varphi(gkh)\diff \mu(k)\\
	&=\int_{\Fix_G(v)} \varphi(ghh^{-1}kh)\diff \mu(k)\\
	&=\int_{\Fix_G(h^{-1}v)} \varphi(ghk)\diff \mu(k)=0.
	\end{split}
	\end{equation*} 
	On the other hand, since every element of $\mathcal{L}(e)$ is $\Fix_G(e)$-right invariant, notice that $\nu$ is an involution and that it does not depend on our choice of inversion of the edge $e$. For every $\epsilon\in \{-1,1\}$, we let $\mathcal{L}(e)_{\epsilon}$ be the eigenspace of $\nu$ and we $\sigma^\epsilon:G\rightarrow \mathcal{U}(\mathcal{L}(e)_{\epsilon})$ be the unitary representation of $G$ defined by $\sigma^\epsilon(t)\varphi(g)=\varphi(t^{-1}g)$ $\forall g,t\in G$, $\forall \varphi\in \mathcal{L}(e)_{\epsilon}$. We are now ready to state the classification of special representations.
	\begin{theorem}[{\cite[Chapter III, Section 2]{FigaNebbia1991}}, {\cite[Section 5.6]{Matsumoto}}]\label{thm la classification des speciale}
		Let $T$ be a $(d_0,d_1)$-semi-regular tree with $d_0,d_1\geq 3$, let $e\in E(T)$ and let $G\leq \Aut(T)$ be a closed non-compact subgroup acting transitively on the boundary $\partial T$. Every special representation of $G$ is square integrable and admits a $\Fix_G(f)$-invariant vector for every $f\in E(T)$. Furthermore:
		\begin{enumerate}
			\item If $G$ acts transitively on $V(T)$, $(\sigma^{-1},\mathcal{L}(e)_{-1})$ and $(\sigma^{+1},\mathcal{L}(e)_{+1})$ are representatives of the two equivalence classes of special representations.
			\item If $G$ has two orbits on $V(T)$, $(\sigma,\mathcal{L}(e))$ is a representative of the unique equivalence class of special representations.
		\end{enumerate}
	\end{theorem}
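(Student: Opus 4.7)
The plan is to combine an explicit realization of the candidate representations inside $L^2(G)$ with an appeal to the Iwahori-Hecke algebra of the B-N pair $(G, \Fix_G(e))$ to pin down irreducibility and exhaustiveness.

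First I would verify that $\mathcal{L}(e)$ is a well-defined closed left-$G$-invariant subspace of $L^2(G)$: closedness follows by writing it as the intersection of the kernels of the bounded averaging operators $P_v\varphi(g) = \int_{\Fix_G(v)}\varphi(gk)\diff\mu(k)$ for $v\in e$, and left-invariance is immediate since the defining conditions involve only right translations. In the vertex-transitive case, a short computation using the unimodularity of $G$ and the fact that any two inversions of $e$ differ by right multiplication by an element of $\Fix_G(e)$ shows that $\nu$ is a well-defined unitary involution commuting with the left $G$-action; this gives the orthogonal decomposition $\mathcal{L}(e)=\mathcal{L}(e)_{+1}\oplus\mathcal{L}(e)_{-1}$ into closed subrepresentations, so the three representations $\sigma$, $\sigma^{+1}$, $\sigma^{-1}$ make sense.

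Next I would produce explicit $\Fix_G(e)$-invariant vectors by searching among linear combinations of the form $\varphi = \alpha\,\mathds{1}_{\Fix_G(e)} + \beta\,\mathds{1}_{\Fix_G(v)} + \gamma\,\mathds{1}_{\Fix_G(v')}$: these functions are bi-$\Fix_G(e)$-invariant (because $\Fix_G(e)\subseteq\Fix_G(v)\cap\Fix_G(v')$), and for a suitable non-trivial choice of coefficients the conditions $P_v\varphi = P_{v'}\varphi = 0$ are satisfied, giving a non-zero vector of $\mathcal{L}(e)$ that is fixed under $\sigma(\Fix_G(e))$. Projecting onto the eigenspaces of $\nu$ yields non-zero fixed vectors in each of $\mathcal{L}(e)_{\pm 1}$ in the vertex-transitive case. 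That the constructed representations contain no $\Fix_G(v)$-fixed vector follows by combining the right averaging condition with the fact that a $\Fix_G(v)$-fixed vector for $\sigma$ is left-$\Fix_G(v)$-invariant as a function, and arguing via a coset decomposition of $G$ indexed by the edges of $T$ that such a function must vanish identically. Square integrability should come from an explicit matrix-coefficient computation, in the spirit of \cite{FigaNebbia1991}, showing geometric decay in the tree distance which dominates the exponential growth of balls.

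Finally, irreducibility and exhaustiveness should both follow from Matsumoto's theory. By \cite[Corollary 3.6]{CapraceCiobotaru2015}, $G$ (or its type-preserving index-two subgroup) carries a B-N pair with $B=\Fix_G(e)$, so the space $\Hr{\pi}^{\Fix_G(e)}$ attached to any irreducible representation $\pi$ is a module over the Iwahori-Hecke algebra $\mathcal{H}=C_c(\Fix_G(e)\backslash G/\Fix_G(e))$, and \cite[Chapter 5, Section 6]{Matsumoto} identifies this assignment with a bijection onto the irreducible $\mathcal{H}$-modules. Since $\mathcal{H}$ is an affine Hecke algebra of type $\tilde{A}_1$ whose finite-dimensional irreducibles are classified explicitly, one reads off that the special classes are exactly two in the vertex-transitive case and one in the two-orbit case, and matches them with the candidates built above. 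The main obstacle is showing that the $\Fix_G(e)$-invariant vectors produced in the previous step generate the whole of $\mathcal{L}(e)_\epsilon$ (respectively $\mathcal{L}(e)$) under the $G$-action, so that irreducibility on the Hecke side transports to irreducibility of the global representation; this is where the distinction between the vertex-transitive and two-orbit regimes really enters, the two-orbit case being degenerate in that $\nu$ is unavailable and $\mathcal{L}(e)$ is already irreducible.
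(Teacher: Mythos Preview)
The paper does not actually prove this theorem: it is stated as a classical result and attributed to \cite[Chapter III, Section 2]{FigaNebbia1991} and \cite[Section 5.6]{Matsumoto}, with the explicit disclaimer at the start of Section~\ref{Section spherical and special rep} that ``we claim no originality'' and ``refer to \cite{Matsumoto}, \cite{Olshanskii1977} and \cite{FigaNebbia1991} for details.'' So there is no in-paper proof to compare against.

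That said, your sketch is a reasonable reconstruction of how the cited references proceed. The two-step strategy---an explicit realisation of candidate special representations inside $L^2(G)$ in the style of Fig\`a-Talamanca--Nebbia, followed by an appeal to Matsumoto's bijection between irreducible $B$-spherical representations and irreducible modules over the affine Hecke algebra of type $\tilde A_1$ to establish exhaustiveness and count the classes---is exactly the combination the paper is pointing to. Your identification of the ``main obstacle'' (that the $\Fix_G(e)$-fixed vector you build must be cyclic) is accurate, and this is precisely what the Hecke-module viewpoint buys: once you know the $\Fix_G(e)$-invariants form an irreducible Hecke module, cyclicity of any non-zero invariant vector is automatic.

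One caution on your explicit construction: the ansatz $\varphi=\alpha\,\mathds{1}_{\Fix_G(e)}+\beta\,\mathds{1}_{\Fix_G(v)}+\gamma\,\mathds{1}_{\Fix_G(v')}$ gives a two-parameter family (up to scaling) subject to two linear constraints $P_v\varphi=P_{v'}\varphi=0$, so existence of a non-trivial solution is not automatic and depends on the indices $[\Fix_G(v):\Fix_G(e)]$; you should check this explicitly rather than assert it. Likewise, your argument that $\mathcal L(e)$ contains no $\Fix_G(v)$-fixed vector needs the observation that a bi-$\Fix_G(v)$-invariant $L^2$-function satisfying both averaging conditions is forced to be harmonic in a sense that precludes square-integrability unless it vanishes; the coset decomposition you mention is the right tool, but the step is not quite as immediate as your one sentence suggests.
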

	
	\section{The Radu groups}\label{section Radu groups}
	Let $T$ be a $(d_0,d_1)$ semi-regular tree with $d_0,d_1\geq 4$ and bipartition $V(T)=V_0\sqcup V_1$ and let $\Aut(T)^+$ denote the group of type-preserving automorphisms of $T$ that is the set of automorphisms of $T$ which leave $V_0$ and $V_1$ invariants. The purpose of this section is to recall the classification of Radu groups \cite{Radu2017}. To this end, we set 
	\begin{equation*}
	\mathcal{H}_T=\{ G\leq\Aut(T)\mid G\mbox{ is closed and } 2\mbox{-transitive on }\partial T \}
	\end{equation*}
	and 
	\begin{equation*}
	\mathcal{H}_T^+=\{ G\leq\Aut(T)^+\mid G\mbox{ is closed and } 2\mbox{-transitive on }\partial T \}.
	\end{equation*} 
	If $d_0 \not= d_1$, notice that every automorphisms of $T$ is type-preserving so that $\mathcal{H}_T^+= \mathcal{H}_T$. We recall that for each vertex $v\in V(T)$, the stabilizer $\Fix_G(v)$ of $v$ acts on the set $E(v)$ of edges containing $v$ and that the image of $\Fix_G(v)$ in $\Sym(E(v))$ for this projection map (which  we denote by $\underline{G}(v)$) is called the \textbf{local action} of $G$ at $v$. Furthermore, we recall in the light of \cite[Lemma 3.1.1]{BurgerMozes2000}, that every group $G\in \mathcal{H}_T^+$ is transitive on $V_0$ and $V_1$. Hence, all the groups $\underline{G}(v)$ with $v\in V_0$ (respectively $v\in V_1$) are permutation isomorphic to the same group $F_0\leq \Sym(d_0)$ (respectively $F_1\leq \Sym(d_1)$). We recall that the groups $G\in \mathcal{H}_T$ such that $\underline{G}(v)\cong F_t\geq \Alt(d_t)$ for every vertex $v\in V_t(T)$ and for $t\in \{0,1\}$ are called \textbf{Radu groups}. Those groups have been extensively studied and classified by N.~Radu in \cite{Radu2017} when $d_0,d_1\geq 6$. The purpose of this section is to recall his classification.
	
	We start by recalling a few definitions needed to describe Radu groups. For every vertex $v\in V(T)$ and every positive integer $r\in \N$ let $$S(v,r)=\{w\in V(T)\mid d(v,w)=r\}$$ be the set of vertices of $T$ at distance $r$ from $v$.
	\begin{definition}
		A \tg{legal coloring} $i:V(T)\rightarrow \N$ of $T$ is the concatenation of a pair of maps $$i_0:V_0\rightarrow\{1,...,d_1\}\mbox{ and }i_1:V_1\rightarrow\{1,...,d_0\}$$ 
		such that $\restr{i_0}{S(v,1)}:S(v,1)\rightarrow \{1,...,d_1\}\mbox{ and }\restr{i_1}{S(w,1)}:S(w,1)\rightarrow \{1,...,d_0\}$ are bijections for all $v\in V_1$  and $w\in V_0$.
	\end{definition} 
	Given a legal coloring $i$ of $T$ and an automorphism $g\in \Aut(T)$, the \tg{local action} of $g$ at a vertex $v\in V(T)$ is defined as the following permutation: 
	\begin{equation*}
	\sigma_{(i)}(g,v)= \restr{i}{S(gv,1)}\circ g\circ \Big(\restr{i}{S(v,1)}\Big)^{-1}\in \begin{cases}
	\Sym(d_0)\q &\mbox{if }v\in V_0\\
	\Sym(d_1)\q &\mbox{if }v\in V_1
	\end{cases}.
	\end{equation*}  
	\begin{remark}
		If $d_0=d_1$, the tree $T$ is a regular tree and this notion of legal coloring and local action of an element differ from the notion of legal coloring and local action used to define the universals Burger Mozes groups in \cite{BurgerMozes2000}. Indeed, with our definition, the closed subgroup $G\leq \Aut(T)$ of all automorphisms of trees $g\in G$ such that $\sigma_{(i)}(g,v)=\id$ $\forall v\in V$ is not transitive on the set of vertices of $T$ (not even transitive on $V_0$).
	\end{remark}
	
	Now, let $T$ be a $(d_0,d_1)$-semi-regular tree with $d_0,d_1\geq 4$ and let $i$ be a legal coloring of $T$. For every vertex $v\in V(T)$ and every finite set $Y\subseteq \N$ let $$S_Y(v)=\bigcup_{r\in Y} S(v,r)$$ and for every set of vertices $B\subseteq V(T)$ let   
	$$\Sgn_{(i)}(g,B)=\prod_{w\in B}^{}\sgn(\sigma_{(i)}(g,w))$$
	where $\sgn(\sigma_{(i)}(g,w))$ is the sign of the local action $\sigma_{(i)}(g,w)$ of the automorphism $g$ at $w$ for the legal coloring $i$. 
	The following groups will have a central importance in the rest of this paper.
	\begin{definition}\label{definition de GiXY}
		For all (possibly empty) finite sets $Y_0, Y_1$ of $\N$ and every legal coloring $i$ of $T$, we set
		\begin{equation*}
		G^+_{(i)}(Y_0,Y_1)=\left\{g\in \Aut(T)^+\ \middle\vert \begin{array}{l}
		\Sgn_{(i)}(g,S_{Y_0}(v))=1\qq \mbox{ for each }v\in V_{t_0},\\ 
		\Sgn_{(i)}(g,S_{Y_1}(v))=1\qq \mbox{ for each }v\in V_{t_1}
		\end{array}\right\},
		\end{equation*}
		where $t_0=\max(Y_0)\mod2$,  $t_1=(1+\max(Y_1))\mod2$ and $\max(\es)=0$. 
	\end{definition}
	\begin{remark}\label{remark les sommets ont des types opposers}
		Notice, that the choices of $t_0$ and $t_1$ are made in such a way that the vertices of $S_{Y_0}(v)$ with $v\in V_{t_0}$ at maximal distance from $v$ and the vertices of $S_{Y_1}(w)$ with $w\in V_{t_1}$ at maximal distance from $w$ have opposite types.
	\end{remark}
	Notice that $G_{(i)}^+(\es,\es)=\Aut(T)^+$ is the full group of type-preserving automorphisms and that $G^+_{(i)}(\{0\},\{0\})$ is a subgroup of each $G^+_{(i)}(Y_0,Y_1)$. Furthermore, if $T$ is a $d$-regular tree notice that $G^+_{(i)}(\{0\},\{0\})$ is conjugate to $U(\Alt(d))^+$ where $G^+=G\cap \Aut(T)^+$ and $U(\Alt(d))$ is the universal Burger-Mozes group of the alternating group see \cite{BurgerMozes2000}.
	
	As we recall below, when $d_0,d_1\geq 6$, every abstractly simple Radu group is of the form $G_{(i)}^+(Y_0,Y_1)$ for some finite $Y_0,Y_1\subseteq \N$ and some legal coloring $i$ of $T$. Furthermore, when $d_0,d_1\geq 6$, every Radu group $G$ belongs to a finite chain $H_n\geq... \geq H_0$ with $n\in \{0,1,2,3\}$ such that $H_n=G$, $\lb H_t: H_{t-1} \rb =2$ for all $t$ and $H_0$ is conjugate in $\Aut(T)^+$ to one of those $G_{(i)}^+(Y_0,Y_1)$. In particular, using the Appendix \ref{section irreducvtible pour des groupes d'indice 2} the irreducible representations of every Radu groups can be obtained from the irreducible representations of the $G_{(i)}^+(Y_0,Y_1)$ and vice versa. We now recall more precisely the statements proved in \cite{Radu2017} that will be used in this paper.
	\begin{theorem*}[{\cite[Theorem A]{Radu2017}}]
		Let $T$ be a $(d_0,d_1)$-semi-regular tree with $d_0,d_1\geq 4$ and let $i$ be a legal coloring of $T$. Then, for every finite subsets $Y_0, Y_1\subseteq \N$ the group $G^+_{(i)}(Y_0,Y_1)$ belongs to $\mathcal{H}^+_T$ and is abstractly simple.
	\end{theorem*}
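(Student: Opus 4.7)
The plan is to verify, in sequence, that $G^+_{(i)}(Y_0,Y_1)$ is (i) a subgroup of $\Aut(T)^+$, (ii) closed, (iii) $2$-transitive on $\partial T$, and (iv) abstractly simple. The first two are largely bookkeeping. For (i), the tool is the cocycle identity $\sigma_{(i)}(gh,v)=\sigma_{(i)}(g,hv)\sigma_{(i)}(h,v)$, which together with multiplicativity of the sign character yields
\[
\Sgn_{(i)}(gh,S_{Y_0}(v))=\Sgn_{(i)}(g,S_{Y_0}(hv))\cdot\Sgn_{(i)}(h,S_{Y_0}(v)),
\]
where I use type-preservation of $h$ to identify $h\cdot S_{Y_0}(v)$ with $S_{Y_0}(hv)$ and to keep $hv\in V_{t_0}$; both factors on the right are then $+1$ whenever $g,h\in G^+_{(i)}(Y_0,Y_1)$. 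The case of inverses is analogous. For (ii), the function $g\mapsto\Sgn_{(i)}(g,S_{Y_j}(v))$ depends only on the restriction of $g$ to the ball of radius $\max(Y_j)+1$ around $v$; it is therefore locally constant, and the intersection of the resulting clopen conditions over all $v$ is a closed subgroup.

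For (iii), I would exploit the inclusion $G^+_{(i)}(\{0\},\{0\})\subseteq G^+_{(i)}(Y_0,Y_1)$: any automorphism whose local action is even at every vertex automatically has $\Sgn_{(i)}=+1$ on every subset of vertices. The subgroup $G^+_{(i)}(\{0\},\{0\})$ is $\Aut(T)^+$-conjugate to the semi-regular Burger--Mozes universal group with local action $\Alt(d_t)$ at each $v\in V_t$, which is a well-understood object. Since $\Alt(d_t)$ is $2$-transitive on $d_t$ points for $d_t\geq 4$, the Burger--Mozes criterion \cite[Lemma 3.1.1]{BurgerMozes2000} yields $2$-transitivity on $\partial T$ for this subgroup, and the property passes to $G^+_{(i)}(Y_0,Y_1)$.

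The principal difficulty is (iv), abstract simplicity. The plan is to adapt Tits' simplicity theorem. Minimality of the tree action and the absence of a fixed end are inherited from the cocompact subgroup $G^+_{(i)}(\{0\},\{0\})$. The main obstacle is that $G^+_{(i)}(Y_0,Y_1)$ does \emph{not} in general satisfy the Tits independence property: the sign constraint on a sphere $S_{Y_j}(v)$ couples the two half-trees determined by an interior edge, so Tits' argument cannot be invoked as a black box. To circumvent this, I would work directly on a non-trivial normal subgroup $N$. Starting from $n\in N\setminus\{e\}$, a standard translation-commutator manipulation with a hyperbolic element whose axis is displaced far from the support of $n$ produces elements of $N$ supported on one half-tree determined by an edge. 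The key observation to exploit is that each defining condition is a \emph{single} parity constraint on a sphere: given any target local action compatible with this parity, one may pair up odd local actions in the interior of the corresponding ball to realize it by a conjugate in $N$. This should allow me to show $N$ contains every edge fixator in $G^+_{(i)}(Y_0,Y_1)$. A connectivity argument together with the $2$-transitivity on $\partial T$ established in (iii) then promotes $N$ to all of $G^+_{(i)}(Y_0,Y_1)$, which is the hardest step to make fully rigorous and constitutes the real work in Radu's argument.
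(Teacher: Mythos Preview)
The paper does not prove this theorem: it is stated without proof as \cite[Theorem A]{Radu2017} in the expository Section~\ref{section Radu groups}, whose explicit purpose is to \emph{recall} Radu's classification. There is therefore no proof in the present paper to compare your proposal against.

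As for the proposal itself: parts (i)--(iii) are correct and essentially routine, and the paper does use the inclusion $G^+_{(i)}(\{0\},\{0\})\subseteq G^+_{(i)}(Y_0,Y_1)$ elsewhere in exactly the way you suggest. For (iv) you correctly identify the obstruction --- the groups do not satisfy the Tits independence property in general --- and you are candid that the ``pairing up odd local actions'' step is a sketch rather than a proof. That step is indeed where the substance lies in \cite{Radu2017}, and your outline does not yet contain a mechanism that guarantees the normal subgroup contains \emph{every} element of an edge fixator (rather than just some rich family); completing this would require the detailed combinatorial analysis carried out by Radu. So your proposal is a reasonable road map, but the simplicity portion remains a genuine gap that cannot be closed without importing or reproducing the argument from \cite{Radu2017}.
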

	The following results ensure that every Radu group contains a conjugate of such a $G^+_{(i)}(Y_0,Y_1)$ with finite index. To formulate this precisely, we introduce some notations. For every locally compact group $G$ we let $G^{(\infty)}$ be the intersection of all normal cocompact closed subgroups of $G$. We recall that for any group $H\in \mathcal{H}^+_T$, Burger and Mozes proved that $H^{(\infty)}$ belongs to $\mathcal{H}^+_T$ and is topologically simple \cite[Proposition 3.1.2]{BurgerMozes2000} (in our cases, it is even abstractly simple). Finally, we let $\mathcal{G}_{T}^+(i)$ be the set of groups $G^+_{i}(Y_0,Y_1)$ with non-empty finite $Y_0, Y_1\subseteq\N$ such that $y=\max(Y_t)\mod2$ for each $y\in Y_t$ with $y\geq \max(Y_{1-t})$ ($t\in \{0,1\}$). 
	\begin{theorem*}[{\cite[Theorem B]{Radu2017}}]
		Let $T$ be a $(d_0,d_1)$-semi-regular tree with $d_0,d_1\geq 6$, let $i$ be a legal coloring of $T$ and let $G\in \mathcal{H}^+_T$ be such that $\underline{G}(v)\cong F_0\geq \Alt(d_0)$ for each $v\in V_0$ and $\underline{G}(w)\cong F_1\geq \Alt(d_1)$ for each $w\in V_1$. Then, we have $\lb G: G^{(\infty)}\rb \in \{1,2,4\}$ and $G^{(\infty)}$ is conjugate in $\Aut(T)^+$ to an element of $\mathcal{G}_{T}^+(i)$.
	\end{theorem*}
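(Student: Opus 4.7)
The plan is to show that $G$ differs from an abstractly simple subgroup of the form $G_{(i)}^+(Y_0,Y_1)$ only through at most two independent sign characters, which bounds the index at $4$. My first move would be to arrange a normal form: since $G\in \mathcal{H}^+_T$, Lemma~3.1.1 of \cite{BurgerMozes2000} gives transitivity on each $V_t$, and the hypothesis on local actions lets me conjugate $G$ inside $\Aut(T)^+$ so that the legal coloring $i$ is compatible with $G$, meaning the identifications $i|_{S(v,1)}$ realize each $\underline{G}(v)$ as the correct subgroup of $\Sym(d_t)$. With this choice the base group $G_{(i)}^+(\{0\},\{0\})$ sits inside $G$ as an open cocompact subgroup.

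Next I would extract the sign characters. The cocycle identity $\sigma_{(i)}(gh,v) = \sigma_{(i)}(g,hv)\,\sigma_{(i)}(h,v)$ implies that $g \mapsto \Sgn_{(i)}(g,S_Y(v))$ is generally not multiplicative, since it recenters the sphere at $hv$ rather than at $v$. However, for finite $Y\subseteq \N$ satisfying the parity condition encoded in $\mathcal{G}_T^+(i)$, I would verify that $\chi_{Y,t}(g) := \Sgn_{(i)}(g,S_Y(v))$ is independent of $v\in V_t$ and is a genuine group homomorphism on $G$. The parity condition ensures that the outer-most sphere in $S_Y(v)$ lies in a single type-class, which is precisely what makes the product $G$-invariant under recentering. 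This produces at most two non-trivial characters $\chi_0,\chi_1$, one per type, and hence a map $G \to \{\pm 1\}^2$.

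For the identification with $G^{(\infty)}$, let $K$ be the common kernel of these characters. Then $K$ is an open normal subgroup of $G$ of index at most $4$, and by construction $K = G_{(i)}^+(Y_0,Y_1)$ for the minimal pair $(Y_0,Y_1)$ determined by which characters are non-trivial on $G$. By Proposition~3.1.2 of \cite{BurgerMozes2000}, $G^{(\infty)}$ is closed, cocompact, non-trivial, and belongs to $\mathcal{H}^+_T$; by the first Radu theorem cited above, $K$ is abstractly simple. Since $K$ is a cocompact closed normal subgroup of $G$, one has $G^{(\infty)}\leq K$, and since $G^{(\infty)}$ is then a non-trivial closed normal subgroup of the simple group $K$, I conclude $G^{(\infty)} = K = G_{(i)}^+(Y_0,Y_1)$, whence the index bound.

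The main obstacle will lie in the second step: precisely characterising which pairs $(Y_0,Y_1)$ arise and verifying that $\chi_{Y_0,0}$ and $\chi_{Y_1,1}$ together exhaust \emph{all} sign-type obstructions of $G$. This will require a rigidity argument exploiting that $\Alt(d_t)$ carries no non-trivial linear character (so locally only the parity sign can be detected), combined with the $2$-transitivity on $\partial T$ to force any surviving character of $G/G_{(i)}^+(\{0\},\{0\})$ to factor through a finite sphere-sign product; the parity constraint in the definition of $\mathcal{G}_T^+(i)$ is then precisely what makes such a product a well-defined character on $G$.
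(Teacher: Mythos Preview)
The paper does not prove this statement: it is quoted verbatim as \cite[Theorem B]{Radu2017} in Section~\ref{section Radu groups}, whose stated purpose is only to \emph{recall} Radu's classification. There is therefore no proof in the paper to compare your proposal against.

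That said, a few remarks on your sketch as a standalone attempt. The overall architecture---reduce to a simple group of the form $G^+_{(i)}(Y_0,Y_1)$ via sign characters, then invoke its simplicity to identify it with $G^{(\infty)}$---is the right shape and is indeed how Radu proceeds. However, two of your steps are underspecified in ways that matter. First, the assertion that $\chi_{Y,t}(g)=\Sgn_{(i)}(g,S_Y(v))$ is independent of the basepoint $v\in V_t$ and multiplicative is the heart of the matter, and your parenthetical about the ``parity condition'' is not yet an argument: one needs to exploit transitivity of $G$ on $V_t$ together with a careful bookkeeping of how the cocycle $\sigma_{(i)}$ transforms under translation of the center. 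Second, and more seriously, the passage from ``$K$ is the common kernel of the characters I wrote down'' to ``$K=G^+_{(i)}(Y_0,Y_1)$ for some specific $(Y_0,Y_1)$'' is where Radu's actual work lies; you have not explained why \emph{every} element of $\Aut(T)^+$ satisfying the two sign conditions already belongs to $G$, nor how the sets $Y_0,Y_1$ are determined from $G$. Your final paragraph correctly flags this as the main obstacle, but the proposal does not yet contain the rigidity argument needed to close it.
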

	When $T$ is a $d$-regular tree, a similar result holds for all $G\in \mathcal{H}_T- \mathcal{H}_T^+$.
	\begin{theorem*}[{\cite[Corollary C]{Radu2017}}]
		Let $T$ be a $d$-regular tree with $d\geq 6$ and let $i$ be a legal coloring of $T$ and let $G\in \mathcal{H}_{T}- \mathcal{H}^+_T$ be such that $\underline{G}(v)\cong F\geq\Alt(d)$ for each $v\in V(T)$. Then, we have $\lb G: G^{(\infty)} \rb\in \{2,4,8\} $ and $G^{(\infty)}$ is conjugate to ${G}^+_{(i)}(Y,Y)$ for some finite subset $Y$ of $\N$.
	\end{theorem*}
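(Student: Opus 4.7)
The plan is to reduce to the type-preserving case handled by the preceding Theorem B. Set $G^+ := G \cap \Aut(T)^+$; since $G \notin \mathcal{H}_T^+$ this is a closed, open, cocompact subgroup of index $2$ in $G$. A standard verification shows that $G^+\in\mathcal{H}_T^+$: non-compactness is inherited, and transitivity of $G^+$ on $\partial T$ follows from the observation that if $G^+$ had two orbits on $\partial T$ then a type-reversing element of $G$ would have to swap them as a whole, contradicting the $2$-transitivity of $G$; \cite[Lemma 3.1.1]{BurgerMozes2000} then upgrades transitivity to $2$-transitivity. The local action is preserved because every vertex stabilizer in $G$ is automatically type-preserving, so $\underline{G^+}(v)=\underline{G}(v)\supseteq \Alt(d)$ for each $v\in V(T)$. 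Applying Theorem B to $G^+$ yields an element $h\in\Aut(T)^+$ and finite non-empty $Y_0,Y_1\subseteq\N$ such that $h(G^+)^{(\infty)}h^{-1}=G^+_{(i)}(Y_0,Y_1)\in\mathcal{G}^+_T(i)$ and $[G^+:(G^+)^{(\infty)}]\in\{1,2,4\}$.

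The next step is the key identification $G^{(\infty)}=(G^+)^{(\infty)}$. One inclusion follows since $(G^+)^{(\infty)}$ is characteristic in $G^+\trianglelefteq G$, hence normal in $G$, and is cocompact in $G^+$ which is itself cocompact in $G$; so $G^{(\infty)}\leq(G^+)^{(\infty)}$. For the reverse, given any normal cocompact closed $N\trianglelefteq G$, the product $G^+N$ is open (because $G^+$ is open) hence closed, so $G^+/(G^+\cap N)\cong G^+N/N$ embeds as a closed subgroup of the compact group $G/N$, making $G^+\cap N$ cocompact and normal in $G^+$; hence $(G^+)^{(\infty)}\leq G^+\cap N\leq N$, and intersecting over all such $N$ gives $(G^+)^{(\infty)}\leq G^{(\infty)}$. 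Consequently $[G:G^{(\infty)}]=2\cdot[G^+:(G^+)^{(\infty)}]\in\{2,4,8\}$.

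It remains to force $Y_0=Y_1$. Pick any type-reversing $g\in G$; since $G^{(\infty)}$ is characteristic in $G$, the still type-reversing element $\tilde g:=hgh^{-1}$ normalizes $G^+_{(i)}(Y_0,Y_1)$. A direct computation with the sign cocycle $\Sgn_{(i)}$ (after precomposing the legal coloring by $\tilde g$ if needed) shows that conjugation by a type-reversing automorphism swaps $V_{t_0}\leftrightarrow V_{t_1}$ together with the parity parameters $t_0\leftrightarrow t_1$ in Definition \ref{definition de GiXY}, effectively transforming $G^+_{(i)}(Y_0,Y_1)$ into $G^+_{(i)}(Y_1,Y_0)$. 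Combined with the uniqueness, up to conjugation in $\Aut(T)^+$, of the representative of a given simple Radu group within $\mathcal{G}^+_T(i)$ — a by-product of Radu's classification underlying Theorem B — this forces $Y_0=Y_1=:Y$, so $G^{(\infty)}$ is conjugate to $G^+_{(i)}(Y,Y)$, as required.

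The hardest step is clearly the last one: carefully tracking how the sphere conditions $S_{Y_t}$ and the parity parameters $t_0, t_1$ in Definition \ref{definition de GiXY} transform under conjugation by a type-reversing automorphism, and invoking the uniqueness of the representative $(Y_0,Y_1)\in\mathcal{G}^+_T(i)$ to upgrade the swap $(Y_0,Y_1)\leftrightarrow(Y_1,Y_0)$ into an equality.
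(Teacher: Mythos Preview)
The paper does not prove this statement; it is quoted verbatim from \cite[Corollary C]{Radu2017} as part of the background in Section~\ref{section Radu groups}, with no accompanying argument. There is therefore no ``paper's own proof'' to compare against. That said, your reconstruction is the natural one and matches in spirit how Radu derives Corollary~C from his Theorem~B.

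Your steps up through the identification $G^{(\infty)}=(G^+)^{(\infty)}$ and the index computation are clean and correct. The final step---forcing $Y_0=Y_1$---is where the argument is thinnest, and you rightly flag it as the hardest part. Two points deserve more care. First, the assertion that conjugation by a type-reversing element $\tilde g$ carries $G^+_{(i)}(Y_0,Y_1)$ to $G^+_{(i)}(Y_1,Y_0)$ is not literally true with the \emph{same} coloring: what one actually gets is $\tilde g^{-1}G^+_{(i)}(Y_0,Y_1)\tilde g = G^+_{(i')}(Y_1,Y_0)$ for the transported coloring $i'=i\circ\tilde g$, and one then needs the (true, but not stated in this paper) fact that $G^+_{(i')}(Y_1,Y_0)$ and $G^+_{(i)}(Y_1,Y_0)$ are $\Aut(T)^+$-conjugate. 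Second, the ``uniqueness of the representative in $\mathcal{G}^+_T(i)$'' you invoke is a genuine ingredient of Radu's classification (the injectivity half, not just the surjectivity recorded here as Theorem~B); it holds, but it is not quoted in the present paper, so you should cite it explicitly from \cite{Radu2017} rather than treat it as a by-product of what is stated here. With those two points supplied, your argument goes through.
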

	The following theorem follows from Radu's description of Radu groups.
	\begin{theorem}\label{Corollary Radu simple then Radu}
		Let $T$ be a $(d_0,d_1)$-semi-regular tree with $d_0,d_1\geq 6$ and let $i$ be a legal coloring of $T$. Every Radu group $G$ admits a finite chain $H_n\geq... \geq H_0$ with $n\in \{0,1,2,3\}$ such that $H_n=G$, $\lb H_t: H_{t-1} \rb =2$ for all $t$ and $H_0$ is conjugate in $\Aut(T)^+$ to $G_{(i)}^+(Y_0,Y_1)$. 
	\end{theorem}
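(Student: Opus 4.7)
\emph{Proof proposal.} The plan is to deduce the statement directly from the two Radu classification results quoted just above, combined with a standard fact about finite $2$-groups. I split into two cases according to whether $G$ consists entirely of type-preserving automorphisms or not.

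If $G\in \mathcal{H}_T^+$, Theorem B of \cite{Radu2017} gives $\lb G:G^{(\infty)}\rb\in\{1,2,4\}$, with $G^{(\infty)}$ conjugate in $\Aut(T)^+$ to an element of $\mathcal{G}_T^+(i)$ and hence of the form $G_{(i)}^+(Y_0,Y_1)$. If instead $G\in \mathcal{H}_T- \mathcal{H}_T^+$, then $T$ is necessarily $d$-regular, since otherwise every automorphism of $T$ is type-preserving (as noted at the start of Section \ref{section Radu groups}); Corollary C of \cite{Radu2017} then gives $\lb G:G^{(\infty)}\rb\in\{2,4,8\}$ with $G^{(\infty)}$ conjugate in $\Aut(T)^+$ to some $G_{(i)}^+(Y,Y)$, which is again of the form $G_{(i)}^+(Y_0,Y_1)$.

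In either case, set $H_0:=G^{(\infty)}$. Being the intersection of the closed normal cocompact subgroups of $G$, the subgroup $G^{(\infty)}$ is closed and normal in $G$, so that $Q:=G/G^{(\infty)}$ is a finite $2$-group of order $2^n$ with $n\in\{0,1,2,3\}$. A standard induction on $\lvert P\rvert$, exploiting the non-triviality of the centre of a non-trivial $p$-group, shows that every finite $p$-group $P$ admits a subnormal series $\{1\}=K_0<K_1<\cdots<K_n=P$ with $\lb K_t:K_{t-1}\rb=p$ for each $t$. Applying this to $Q$ and pulling the chain back through the quotient map $G\twoheadrightarrow Q$ yields a chain $H_0\leq H_1\leq \cdots \leq H_n=G$ of closed subgroups of $G$ with $\lb H_t:H_{t-1}\rb =2$ for every $t$, which is the desired conclusion.

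No step poses any substantive obstacle once Radu's classification is invoked; the only mild subtlety is the dichotomy between the type-preserving and the non-type-preserving cases, and the observation that the latter requires the regularity of $T$ in order to apply Corollary C of \cite{Radu2017}.
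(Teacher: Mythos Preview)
Your proposal is correct and is precisely the argument the paper has in mind: the paper does not give a detailed proof but simply records the theorem as a direct consequence of Radu's Theorem B and Corollary C, and your write-up supplies exactly the missing details (the type-preserving/non-type-preserving dichotomy and the refinement of the finite $2$-group quotient into a chain of index-$2$ subgroups).
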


	\section{Cuspidal representations of the simple Radu groups}\label{Section classification des rep de simple radu groups}
	Let $T$ be a $(d_0,d_1)$-semi-regular tree with $d_0,d_1\geq 4$ and let $V(T)=V_0\sqcup V_1$ be the associated bipartition. Let $i$ be a legal coloring of $T$ and let $Y_0,Y_1\subseteq \N$ be two finite subsets. Recall from Section \ref{section Radu groups} that $G_{(i)}^+(Y_0,Y_1)$ (Definition \ref{definition de GiXY}) is a closed abstractly simple subgroups of $\Aut(T)^+$ acting $2$-transitively on the boundary $\partial T$ and whose local action at every vertex contains the alternating group. Furthermore, when $d_0,d_1 \geq 6$, Radu's classification ensures that every simple Radu group is of this form. Our current purpose is to describe the irreducible representations of $G_{(i)}^+(Y_0,Y_1)$ and show that this group is uniformly admissible, hence CCR.

	We recall from the introduction that the irreducible representations of $G_{(i)}^+(Y_0,Y_1)$ splits in three categories. Those are either \tg{spherical}, \tg{special} or \tg{cuspidal}. A classification of the spherical and special representations of any subgroup $G\leq \Aut(T)$ acting $2$-transitively on the boundary is already given in Section \ref{Section spherical and special rep}. In particular, this classification applies to the spherical and special representations of $G_{(i)}^+(Y_0,Y_1)$. Our current purpose is to give a description of the cuspidal representations of those groups. As announced in the introduction, our idea is to take advantage of the recent abstraction of Ol'shanskii's framework developed in \cite{Semal2021O} and the description of those groups provided by Radu. The main concept developed in \cite{Semal2021O}, is the concept of Ol'shanskii's factorization see Definition \ref{definition olsh facto} below. We recall that such a factorization leads to a description of the irreducible representations admitting particular invariant vectors as induced representations from compact open subgroups. 
	The author would like to recall that an Ol'shanskii's factorization for Radu groups is already provided by \cite[Section 4]{Semal2021O} since they satisfy a generalisation of the Tits independence property (the property ${\rm IP} _k$ defined in \cite{BanksElderWillis2015}). However, this approach never leads to a description of every cuspidal representations of $G_{(i)}^+(Y_0,Y_1)$ unless the property property ${\rm IP}_k$ coincides with the Tits independence property (that is $k=1$). By contrast, the approach developed in the present section relies on the independence provided by local conditions given by Definition \ref{definition de GiXY} rather than the property ${\rm IP}_k$ and a description of every cuspidal representation of $G_{(i)}^+(Y_0,Y_1)$ is obtained in Section \ref{section cuspidal rep description} below.
	
	\subsection{Preliminaries}
	The purpose of this section is to recall the axiomatic framework developed in \cite{Semal2021O} (we refer to this paper for details). This machinery will then be used in the following sections to obtain a description of the cuspidal representations of the Radu groups $G_{(i)}^+(Y_0,Y_1)$.
	
	Let $G$ be a totally disconnected locally compact group, let $\mathcal{B}$ denote the set of compact open subgroups of $G$, $P(\mathcal{B})$ denote the power set of $\mathcal{B}$ and let
	$$\mathcal{C}: \mathcal{B}\rightarrow P(\mathcal{B})$$
	be the map sending a compact open subgroup to its conjugacy class in $G$. Let $\mathcal{S}$ be a basis of neighbourhoods of the identity consisting of compact open subgroups of $G$ and let $\mathcal{F}_{\mathcal{S}}=\{ \mathcal{C}(U)\lvert U\in \mathcal{S}\}$. We equip $\mathcal{F}_\mathcal{S}$ with the partial order given by the reverse inclusion of representatives ($\mathcal{C}(U)\leq \mathcal{C}(V)$ if there exists $\tilde{U}\in \mathcal{C}(U)$ and $\tilde{V}\in \mathcal{C}(V)$ such that $\tilde{V}\subseteq \tilde{U}$). For a poset $(P, \leq)$ and an element $x\in P$, we recall that the \tg{height} of $x$ in $(P,\leq)$ is $L_x-1$ where $L_x$ is the maximal length of a strictly increasing chain in $P_{\leq x}=\{y\in P\lvert y\leq x\}$ if such a maximal length exists and we say that the height is infinite otherwise. 
	\begin{definition}
		A basis of neighbourhoods of the identity $\mathcal{S}$ consisting of compact open subgroups of $G$ is called a \tg{generic filtration} of $G$ if the height of every element in $\mathcal{F}_{\mathcal{S}}$ is finite.
	\end{definition} 
	
	Every generic filtration $\mathcal{S}$ of $G$ splits as a disjoint union $\mathcal{S}=\bigsqcup_{l\in \N}\mathcal{S}\lb l \rb$ where $\mathcal{S}\lb l\rb$ denotes the set of elements $U\in \mathcal{S}$ such that $\mathcal{C}(U)$ has height $l$ in $\mathcal{F}_{\mathcal{S}}$. The element of $\mathcal{S}\lb l \rb$ are called the elements at \tg{depth} $l$. Since $\mathcal{S}$ is a basis of neighbourhood of the identity consisting of compact open subgroups of $G$, notice that for every irreducible representation $\pi$ of $G$, there exists a group $U\in \mathcal{S}$ such that $\pi$ admits non-zero $U$-invariant vector. In particular, for every irreducible representation $\pi$ of $G$ there exists a smallest non-negative integer $l_\pi\in \N$ such that $\pi$ admits non-zero $U$-invariant vectors for some $U\in \mathcal{S}\lb l_\pi \rb$. This $l_\pi$ is called the \tg{depth} of $\pi$ with respect to $\mathcal{S}$.
	
	The key notion developed in \cite{Semal2021O} is the notion of \textbf{factorization} at depth $l$ for a generic filtration $\mathcal{S}$ that we now recall.
	\begin{definition}\label{definition olsh facto}
		Let $G$ be a non-discrete unimodular totally disconnected locally compact group, let $\mathcal{S}$ be a generic filtration of $G$ and let $l$ be a strictly positive integer. We say that $\mathcal{S}$ \tg{factorizes at depth} $l$ if the following conditions hold:
		\begin{enumerate}
			\item For all $U\in\mathcal{S}\lb l \rb$ and every $V$ in the conjugacy class of an element of $\mathcal{S}$ such that $V \not\subseteq U$, there exists $W$ in the conjugacy class of an element of $\mathcal{S}\lb l -1\rb$ such that: $$U\subseteq W \subseteq V U=\{vu\lvert u\in U, v\in V\}.$$
			\item For all $U\in\mathcal{S}\lb l \rb$ and every $V$ in the conjugacy class of an element of $\mathcal{S}$, the set
			\begin{equation*}
			N_G(U, V)= \{g\in G \lvert g^{-1}Vg\subseteq U\}
			\end{equation*}
			is compact.
		\end{enumerate} 
		Furthermore, the generic filtration $\mathcal{S}$ of $G$ is said to \tg{factorize$^+$ at depth} $l$ if in addition for all $U\in\mathcal{S}\lb l\rb$ and every $W$ in the conjugacy class of an element of $\mathcal{S}\lb l-1\rb$ such that $U\subseteq W$ we have
		\begin{equation*}
		W\subseteq N_G(U,U) =\{g\in G \mid g^{-1}Ug\subseteq U\}.
		\end{equation*} 
		Since $G$ is unimodular, notice that the set $N_G(U,U)$ coincides with the normalizer $N_G(U)$ of $U$ in $G$. 
	\end{definition} 
	The relevance of this notion is given by \cite[Theorem A]{Semal2021O} which lead to a description of the irreducible representations at height $l$ in terms of a family of irreducible representations of finite groups called $\mathcal{S}$-standard representations (Definition \ref{definitoin standard representations} below) if the generic filtration factorizes$^+$ at height $l$.
	
	\subsection{Generic filtration for $G^+_{(i)}(Y_0,Y_1)$}\label{chapter generic filtration for Gi+YY}
	
	Let $T$ be a $(d_0,d_1)$-semi-regular tree with $d_0,d_1\geq 4$ and let $V(T)=V_0\sqcup V_1$ be the associated bipartition. Let $i$ be a legal coloring of $T$ and let $Y_0,Y_1\subseteq \N$ be two finite subsets. The purpose of this section is to explicit a generic filtration for $G_{(i)}^+(Y_0,Y_1)$. To this end, let $\mathfrak{T}_0$ be the family of subtrees of $T$ defined by
	$$\mathfrak{T}_{0}=\{B_T(v,r)\lvert v\in V(T), r\geq 1\}\sqcup\{B_T(e,r)\lvert e\in E(T), r\geq 0\}$$
	and consider the basis of neighbourhoods of the identity given by the fixators of those trees
	\begin{equation*}
	\mathcal{S}_{0}=\{\Fix_G(\mathcal{T})\lvert \mathcal{T}\in \mathfrak{T}_{0}\}.
	\end{equation*}
	In \cite{Semal2021O}, one introduced the following definition:
	\begin{definition}\label{definition Hypothese H0}
		A group $G\leq\Aut(T)$ is said to satisfy the hypothesis \ref{Hypothese H0} if for all $\mathcal{T},\mathcal{T'}\in \mathfrak{T}_{0}$ we have that
		\begin{equation}\tag{$H_0$}\label{Hypothese H0}
		\Fix_G(\mathcal{T}')\leq \Fix_G(\mathcal{T})\mbox{ if and only if }\mathcal{T}\subseteq \mathcal{T}'. 
		\end{equation}
	\end{definition}
	\begin{lemma}[{\cite[Lemma 4.12]{Semal2021O}}]\label{Lemme la forme des elements de Sl}
			Let $G\leq \Aut(T)$ be a closed non-discrete unimodular subgroup satisfying the hypothesis \ref{Hypothese H0}. Then, $\mathcal{S}_{0}$ is a generic filtration of $G$ and the sets $\mathcal{S}_{0}\lb l \rb$ can be described as follows:
		\begin{itemize}
			\item If $l$ is even $\mathcal{S}_{0}\lb l\rb = \{\Fix_G(B_T(e,\frac{l}{2}))\lvert e\in E(T)\}.$
			\item If $l$ is odd $\mathcal{S}_{0}\lb l \rb=\{ \Fix_G(B_T(v,(\frac{l+1}{2})))\mid v\in V(T)\}.$
		\end{itemize} 
	\end{lemma}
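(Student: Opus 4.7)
The plan is first to interpret the order on $\mathcal{F}_{\mathcal{S}_{0}}$ as a reverse inclusion on subtrees using hypothesis \ref{Hypothese H0}, and then to attach to each $\mathcal{T}\in\mathfrak{T}_{0}$ a diameter invariant that governs the height of the corresponding conjugacy class.

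I would first observe that each $\mathcal{T}\in\mathfrak{T}_{0}$ is a finite subtree of the locally finite tree $T$, so that $\Fix_G(\mathcal{T})$ is compact and open, and since $\mathfrak{T}_{0}$ contains arbitrarily large balls, the family $\mathcal{S}_{0}$ is a basis of neighbourhoods of the identity. Since $g\Fix_G(\mathcal{T})g^{-1}=\Fix_G(g\mathcal{T})$ and $g\mathcal{T}\in\mathfrak{T}_{0}$ has the same type (edge-ball or vertex-ball) and same radius as $\mathcal{T}$, each conjugacy class $\mathcal{C}(\Fix_G(\mathcal{T}))$ corresponds to a $G$-orbit of subtrees in $\mathfrak{T}_{0}$, and applying \ref{Hypothese H0} I would conclude that $\mathcal{C}(\Fix_G(\mathcal{T}))\leq \mathcal{C}(\Fix_G(\mathcal{T}'))$ in $\mathcal{F}_{\mathcal{S}_{0}}$ if and only if some $G$-translate of $\mathcal{T}$ is contained in $\mathcal{T}'$.

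Next, I would attach to each $\mathcal{T}\in\mathfrak{T}_{0}$ the $G$-invariant $\delta(\mathcal{T})$ equal to $2r$ for $B_T(v,r)$ and to $2r+1$ for $B_T(e,r)$, and claim that the height of $\mathcal{C}(\Fix_G(\mathcal{T}))$ in $\mathcal{F}_{\mathcal{S}_{0}}$ equals $\delta(\mathcal{T})-1$. For the upper bound, a strict relation $\mathcal{C}(\Fix_G(\mathcal{T}_1))<\mathcal{C}(\Fix_G(\mathcal{T}_2))$ yields, up to translation, $\mathcal{T}_1\subsetneq \mathcal{T}_2$ with $\mathcal{T}_1,\mathcal{T}_2$ in distinct $G$-orbits, and a short case analysis in the tree shows $\delta(\mathcal{T}_2)>\delta(\mathcal{T}_1)$: same-type same-radius balls cannot be strictly contained one in the other (the vertex realizing the radius of the inner ball lies outside the outer one as soon as the centers differ); a strict inclusion $B_T(v,r)\subsetneq B_T(e,r')$ requires $r'\geq r$ and raises $\delta$ by at least one (the two diameters have opposite parities); and $B_T(e,r)\subsetneq B_T(v,r')$ requires $r'\geq r+1$, because the endpoint of $e$ farthest from $v$ contributes a vertex at distance $r+1$ from $v$ that must lie in the outer ball. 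For the lower bound, if $\mathcal{T}=B_T(v_0,r)$, fix any edge $e_0$ at $v_0$ and consider the strictly increasing chain
\begin{equation*}
B_T(e_0,0)\subsetneq B_T(v_0,1)\subsetneq B_T(e_0,1)\subsetneq B_T(v_0,2)\subsetneq\cdots\subsetneq B_T(v_0,r)=\mathcal{T},
\end{equation*}
of $\delta(\mathcal{T})=2r$ elements; a symmetric construction works when $\mathcal{T}=B_T(e_0,r)$.

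Since diameters are positive integers bounded above by $\delta(\mathcal{T})$, the upper-bound step implies that every chain below $\mathcal{C}(\Fix_G(\mathcal{T}))$ is finite, so $\mathcal{S}_{0}$ is a generic filtration; combined with the explicit chain I would conclude that the height is exactly $\delta(\mathcal{T})-1$. The description of $\mathcal{S}_{0}[l]$ then follows by parity: $l=2r$ selects the conjugacy classes with $\delta=2r+1$, i.e.\ the fixators of $B_T(e,r)$ for $e\in E(T)$; $l=2r-1$ selects those with $\delta=2r$, i.e.\ the fixators of $B_T(v,r)$ for $v\in V(T)$. The hard part is the diameter-monotonicity step across the two families of balls; once that case analysis is secured, everything else is straightforward bookkeeping.
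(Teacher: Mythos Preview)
The paper does not supply its own proof of this lemma; it is quoted from \cite[Lemma~4.12]{Semal2021O} and used as a black box. Your argument is correct and is the natural one: the invariant $\delta(\mathcal{T})$ you introduce is precisely the diameter of $\mathcal{T}$, and hypothesis~\ref{Hypothese H0} converts the reverse-inclusion order on $\mathcal{F}_{\mathcal{S}_0}$ into the inclusion order on $G$-orbits in $\mathfrak{T}_0$, after which the height computation reduces to elementary combinatorics of balls in a tree.

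Two small points of exposition. First, your case analysis for the upper bound omits the same-type different-radius case (for instance $B_T(v,r)\subsetneq B_T(v',r')$ with $r<r'$), which is of course immediate by diameter monotonicity; it would be cleaner to argue once that $\delta$ is the diameter, that diameter is weakly monotone under inclusion, and that two distinct members of $\mathfrak{T}_0$ of equal diameter are never comparable (your case~(a) together with the parity remark), so that any strict inclusion forces a strict increase of $\delta$. Second, for the lower bound you exhibit a strictly increasing chain of subtrees but do not say why it yields a \emph{strict} chain in $\mathcal{F}_{\mathcal{S}_0}$: this is because consecutive terms have distinct diameters, hence lie in distinct $G$-orbits (as $\delta$ is $G$-invariant), hence give distinct conjugacy classes. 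Both points are cosmetic; the proof is sound.
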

	We come back to our case  $G=G^+_{(i)}(Y_0,Y_1)$. 
	\begin{lemma}\label{lemma G satisfies hypotheses H0}
		The group $G^+_{(i)}(Y_0,Y_1)$ satisfies the hypothesis \ref{Hypothese H0}.
	\end{lemma}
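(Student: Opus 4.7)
The ``if'' direction of~\eqref{Hypothese H0} is immediate, as any automorphism fixing every vertex of $\mathcal{T}'$ fixes those of any subtree $\mathcal{T}\subseteq \mathcal{T}'$. For the converse I will prove the contrapositive: given $v \in V(\mathcal{T}) \setminus V(\mathcal{T}')$, I shall exhibit an element $g \in G^+_{(i)}(Y_0, Y_1)$ that fixes $\mathcal{T}'$ pointwise while $gv \neq v$. A useful preliminary observation is that any type-preserving automorphism whose local action at every vertex lies in the alternating group automatically belongs to $G^+_{(i)}(Y_0, Y_1)$ for \emph{every} choice of $Y_0, Y_1$, because then each factor $\sgn(\sigma_{(i)}(g, w))$ equals $1$ and hence $\Sgn_{(i)}(g, B) = 1$ for every finite $B \subseteq V(T)$.

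Since $\mathcal{T}'$ is a ball around a vertex or an edge, it is convex in $T$, so there is a unique vertex $u \in V(\mathcal{T}')$ minimizing $d(u, v)$. This $u$ necessarily lies on the boundary of $\mathcal{T}'$: exactly one edge at $u$ belongs to $\mathcal{T}'$, leaving $\deg(u) - 1 \geq 3$ \emph{outward} edges. One of these is the initial edge of the geodesic from $u$ to $v$, and the hypothesis $d_0, d_1 \geq 4$ lets me pick two further outward neighbors $v', v''$ of $u$, distinct from $v$ and from each other.

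I specify $g$ by prescribing its local actions with respect to $i$: at $u$, take $\sigma_{(i)}(g, u)$ to be the $3$-cycle cyclically permuting the three outward edges $(u, v), (u, v'), (u, v'')$ and fixing the remaining edges at $u$; at every other vertex require the trivial local action. Such a datum extends uniquely to a type-preserving automorphism: outside the three half-trees of $T \setminus \{u\}$ containing $v$, $v'$ or $v''$, every vertex is fixed, while on these three half-trees $g$ acts by the unique color-preserving isomorphisms induced by $i$. These isomorphisms are well-defined because the prescription ``trivial local action at every vertex'' propagates the map deterministically along the tree, using that the legal coloring restricts to a bijection on each neighborhood.

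It then suffices to check the three required properties. First, $g$ fixes $\mathcal{T}'$ pointwise: the $3$-cycle moves only outward edges at $u$, so every neighbor of $u$ belonging to $\mathcal{T}'$ is fixed, and trivial local action inside $\mathcal{T}'$ propagates the fixation to all vertices of $\mathcal{T}'$ by induction on their distance to $u$. Second, $gv = v' \neq v$ by construction. Third, every local action of $g$ is either the identity or a $3$-cycle, hence even, so by the preliminary observation $g \in G^+_{(i)}(Y_0, Y_1)$. The only genuinely delicate point is verifying that the prescribed local actions yield a single well-defined automorphism of $T$; this is a standard consequence of the tree structure combined with the deterministic propagation afforded by trivial local actions and the legal coloring, and is the one step where the hypothesis $d_0,d_1 \geq 4$ is actually used (via the availability of $v'$ and $v''$ needed for the $3$-cycle to be even).
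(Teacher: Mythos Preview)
Your argument is essentially the paper's: both exploit that any type-preserving automorphism whose local action at every vertex is even lies in $G^+_{(i)}(Y_0,Y_1)$, and both construct such an element fixing $\mathcal{T}'$ while moving $v$ by inserting a single non-trivial even local action on the geodesic joining $v$ to $\mathcal{T}'$ (the paper places it at the neighbour of $v$, you at the boundary vertex $u$ of $\mathcal{T}'$).

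One slip: you write the outward edge as ``$(u,v)$'' and assert $gv=v'$, implicitly assuming $v$ is adjacent to $u$, which need not hold. If $v_0$ denotes the neighbour of $u$ on the geodesic toward $v$, the $3$-cycle should permute the edges $(u,v_0),(u,v'),(u,v'')$; then $g$ carries the entire half-tree rooted at $v_0$ (which contains $v$) onto the disjoint half-tree rooted at $v'$, so $gv\neq v$ and your argument goes through unchanged.
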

	\begin{proof}
		For every set $X$, let $\Delta_X$ denote the diagonal $\Delta_X=\{(x,x)\lvert x\in X\}\subseteq X\times X$. We choose two functions 
		\begin{equation*}
		\psi_0 \fct{\{1,...,d_0\}\times \{1,...,d_0\}-\Delta_{ \{1,...,d_0\}}}{\Sym(d_0)}{(k,l)}{\psi_0(k,l)}
		\end{equation*}
		\begin{equation*}
		\psi_1 \fct{\{1,...,d_1\}\times\{1,...,d_1\}-\Delta_{ \{1,...,d_0\}}}{\Sym(d_1)}{(k,l)}{\psi_1(k,l)}
		\end{equation*}
		such that $\psi_t(k,l)$ is a non-trivial element of $\Alt(d_t)$ which fixes $k$ but not $l$. Notice that the existence of such functions is guaranteed from the fact that  $d_0,d_1\geq 4$. For shortening of the formulation we denote by $G$ be the group $G^+_{(i)}(Y_0,Y_1)$. Let $\mathcal{T},\mathcal{T}'$ be two subtrees of $\mathfrak{T}_0$. If $\mathcal{T}\subseteq \mathcal{T}'$, we clearly have that $\Fix_G(\mathcal{T}')\leq \Fix_G(\mathcal{T})$. Now, let us suppose that $\mathcal{T}\not \subseteq \mathcal{T}'$. In order to prove that $G$ satisfies the hypothesis \ref{Hypothese H0}, we need to show that $\Fix_G(\mathcal{T}')\not \subseteq \Fix_G(\mathcal{T})$. Since $\mathcal{T}\not \subseteq \mathcal{T}'$, there exists a vertex $v\in \mathcal{T}$ that does not belong to $\mathcal{T}'$. Let $\gamma$ be the smallest geodesic from $v$ to $\mathcal{T}'$, let $v'$ be the vertex of $\gamma$ that is adjacent to $v$ and let $t\in\{0,1\}$ be such that $v'\in V_t$. Let $w$ be the neighbour of $v'$ which is the closest to $\mathcal{T}'$. Notice that this vertex exists and is unique since $\mathcal{T}$ and $\mathcal{T}'$ are complete. The definition of $v'$ ensures that $\mathcal{T}'\subseteq T(v',v)=\{x\in V(T)\lvert d_T(x,v')<d_T(x,v)\}$. Now, notice the existence of an automorphism $g\in \Fix_{\Aut(T)^+}(T(v',v))$ such that $\sigma_{(i)}(g,v')=\psi_t(i(w),i(v))$ and $\sigma_{(i)}(g,x)$ is even for every $x\in V(T)$. In particular, $g\in G^+_{(i)}(Y_0,Y_1)\cap \Fix_{\Aut(T)^+}(\mathcal{T}')$. However, $g$ does not fix $v$ by construction. This implies that $g\not\in \Fix_G(\mathcal{T})$.  		
	\end{proof}
	\noindent In particular, Lemma \ref{Lemme la forme des elements de Sl} ensures that $\mathcal{S}_0$ is a generic filtration of $G^+_{(i)}(Y_0,Y_1)$. 
	\subsection{Factorization}\label{Section factorization}
	
	Let $T$ be a $(d_0,d_1)$-semi-regular tree with $d_0,d_1\geq 4$ and let $V(T)=V_0\sqcup V_1$ be the associated bipartition. Let $i$ be a legal coloring of $T$ and let $Y_0,Y_1\subseteq \N$ be two finite subsets. We have shown in Section \ref{chapter generic filtration for Gi+YY}, that $\mathcal{S}_0$ is a generic filtration of  $G_{(i)}^+(Y_0,Y_1)$. The purpose of the present section is to prove that this generic filtration factorizes$^+$ at all depth $l\geq 1$.
	
	We start with some notations that will be used in the proof. For every two distinct vertices $v,w\in V(T)$, let $\lb v,w\rb$ be the unique geodesic between $v$ and $w$. Suppose that $d(v,w)=n$, let $v=v_0,v_1,...,v_n=w$ be the sequence of vertices corresponding to $\lb v,w\rb$ in $T$, let 
	\begin{equation*}\label{definition Proj sur la geor}
	p_{\lb v,w\rb}\fct{\lb v,w\rb -\{v\}}{\lb v,w\rb}{v_i}{v_{i-1}}
	\end{equation*}
	and let
	\begin{equation*}
	\begin{split}
	T(v,w)&= \{x\in V(T)\lvert d_T(x,p_{\lb v,w\rb }(w))<d_T(x,w)\}\\
	&= \{x\in V(T)\lvert d_T(x,v_{n-1})<d_T(x,w)\}.
	\end{split}
	\end{equation*}
	\begin{figure}[H]\label{drawing00}\caption{The set $T(v,w)$}
		\begin{center}
			\includegraphics[scale=0.09]{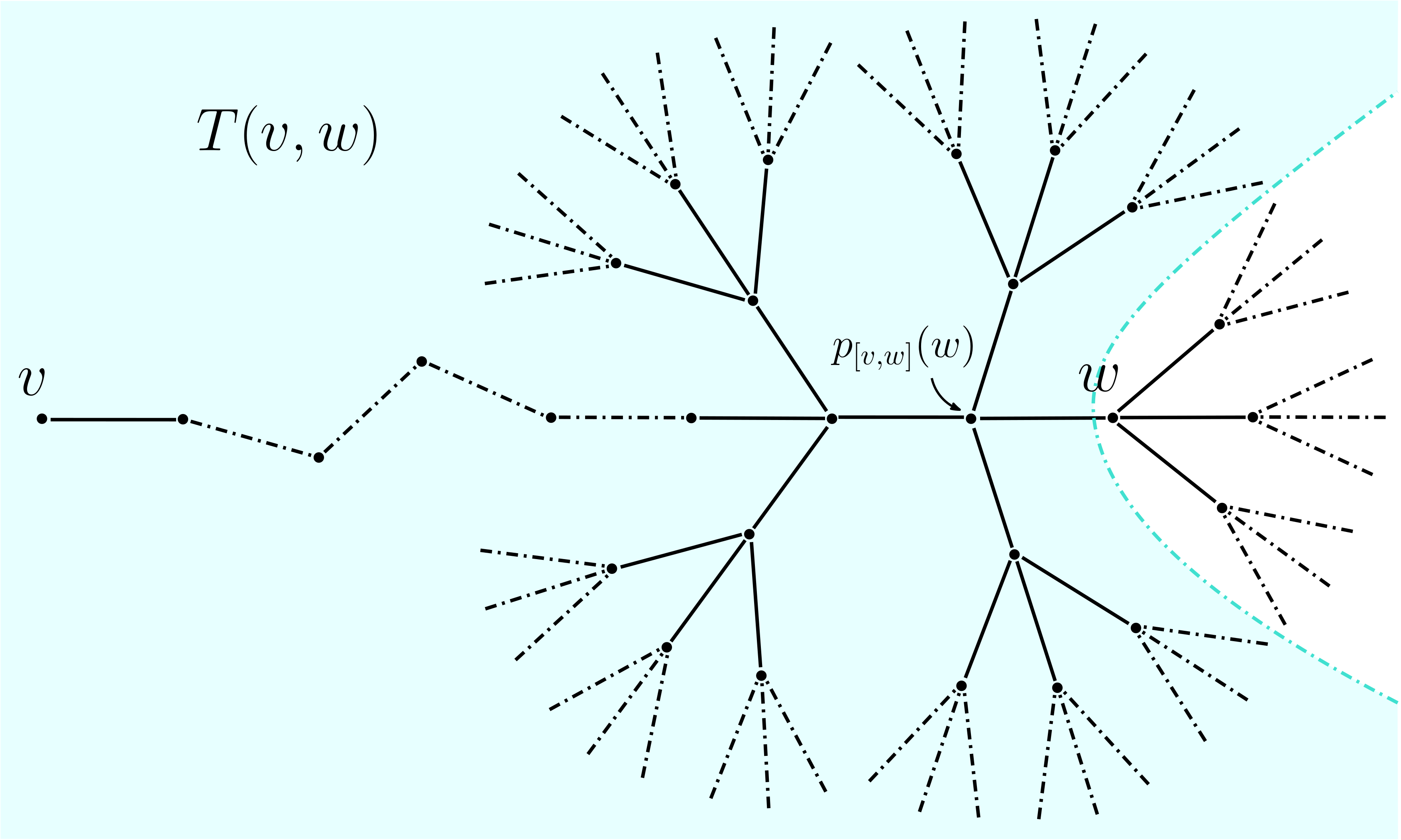}
		\end{center}
	\end{figure}
	\noindent The following intermediate result is the key ingredient required to prove the factorization of the generic filtration $\mathcal{S}_0$ of $G_{(i)}^+(Y_0,Y_1)$ at all depth $l\geq 1$.
	\begin{proposition}\label{Proposition la premiere etape de la fcatorization pour GY1Y_2}
		For all $l,l'\in \N$ such that $l\geq 1$ and $l'\geq l$, for all $U$ in the conjugacy class of an element of $\mathcal{S}_0\lb l \rb$ and every $V$ in the conjugacy class of an element of $\mathcal{S}_0\lb l'\rb$ such that $V\not \subseteq U$, there exists a subgroup $W\in\mathcal{S}_0\lb l-1\rb$ such that $U\subseteq W\subseteq VU$. 
	\end{proposition}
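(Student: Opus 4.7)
Write $U=\Fix_G(\mathcal{T}_U)$ and $V=\Fix_G(\mathcal{T}_V)$ with $\mathcal{T}_U,\mathcal{T}_V\in\mathfrak{T}_{0}$. By Lemma \ref{lemma G satisfies hypotheses H0} the assumption $V\not\subseteq U$ is equivalent to $\mathcal{T}_U\not\subseteq \mathcal{T}_V$, and the constraint $l'\geq l$ means that $\mathcal{T}_V$ has diameter at least that of $\mathcal{T}_U$. The plan is to exhibit a subtree $\mathcal{T}_W\in\mathfrak{T}_{0}$ satisfying $\mathcal{T}_V\cap\mathcal{T}_U\subseteq \mathcal{T}_W\subsetneq \mathcal{T}_U$ and of size one less than $\mathcal{T}_U$, and then set $W=\Fix_G(\mathcal{T}_W)$; Lemmas \ref{Lemme la forme des elements de Sl} and \ref{lemma G satisfies hypotheses H0} will then directly give $U\subseteq W\in\mathcal{S}_{0}\lb l-1\rb$, while the inclusion $\mathcal{T}_V\cap\mathcal{T}_U\subseteq \mathcal{T}_W$ is precisely the compatibility required to factor each $w\in W$ through $V$ and $U$.

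For the construction I would separate two cases. If $l=2r-1$ is odd, so $\mathcal{T}_U=B_T(v,r)$, I would show that $\mathcal{T}_V\cap S(v,r)$ lies in a single branch of $v$: when $v\notin \mathcal{T}_V$ this is clear from the connectedness of $\mathcal{T}_V$, and when $v\in \mathcal{T}_V$, writing $\mathcal{T}_V=B_T(c,s)$ for $c$ a vertex or an edge, the assumption $\mathcal{T}_U\not\subseteq \mathcal{T}_V$ combined with $l'\geq l$ forces $d(v,c)\geq s-r+1$, hence every $x\in S(v,r)$ in a branch of $v$ other than the one pointing towards $c$ satisfies $d(x,c)=r+d(v,c)>s$. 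Letting $v'$ be the neighbour of $v$ in the distinguished branch, set $\mathcal{T}_W=B_T(\{v,v'\},r-1)$. If $l=2r$ is even, so $\mathcal{T}_U=B_T(e,r)$ with $e=\{v,v''\}$, I would show analogously that $\mathcal{T}_V$ cannot meet both extreme spheres $B_T(e,r)\setminus B_T(v,r)$ and $B_T(e,r)\setminus B_T(v'',r)$, because two antipodal points $x_v,x_{v''}$ in those sets satisfy $d(x_v,x_{v''})=2r+1$ and, together with $\mathcal{T}_U\not\subseteq \mathcal{T}_V$, constrain the centre of $\mathcal{T}_V$ along the path $[x_v,x_{v''}]$ enough to yield a triangle-inequality contradiction. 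Picking $v'\in\{v,v''\}$ on the side whose extreme sphere misses $\mathcal{T}_V$, set $\mathcal{T}_W=B_T(v',r)$. In either case $\mathcal{T}_W\subsetneq \mathcal{T}_U$ and $\mathcal{T}_V\cap \mathcal{T}_U\subseteq \mathcal{T}_W$ by construction.

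To verify $W\subseteq VU$, fix $w\in W$ and attempt to build $\bar v\in V$ with $\bar v|_{\mathcal{T}_U}=w|_{\mathcal{T}_U}$; then $\bar v^{-1}w\in U$ and $w=\bar v\cdot(\bar v^{-1}w)\in VU$. The partial map on $\mathcal{T}_U\cup \mathcal{T}_V$ prescribed by $w|_{\mathcal{T}_U}$ and by the identity on $\mathcal{T}_V$ is well-defined, since the two prescriptions both reduce to the identity on $\mathcal{T}_U\cap \mathcal{T}_V\subseteq \mathcal{T}_W$ (as $w$ fixes $\mathcal{T}_W$), and injective: for $x\in \mathcal{T}_U\setminus \mathcal{T}_W$ the parent of $x$ in $\mathcal{T}_U$ belongs to $\mathcal{T}_W$ and is fixed by $w$, so $w(x)$ is a different neighbour of that parent, still in the peripheral set $\mathcal{T}_U\setminus \mathcal{T}_W$, which by construction of $\mathcal{T}_W$ is disjoint from $\mathcal{T}_V$.

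The main obstacle is then the final extension of this partial automorphism to a genuine element $\bar v\in V\cap G^+_{(i)}(Y_0,Y_1)$ rather than only to $\Aut(T)^+$. I would first extend to some $\bar v_0\in \Aut(T)^+$ coinciding with the identity outside a sufficiently large ball, which is standard from the semi-regularity of $T$, and then modify $\bar v_0$ by composing with elements of $\Fix_{\Aut(T)^+}(\mathcal{T}_U\cup \mathcal{T}_V)$ that independently toggle the local signs at each of the finitely many vertices $x$ where the condition $\Sgn_{(i)}(\bar v_0,S_{Y_{t}}(x))=1$ fails. Because $d_0,d_1\geq 4$, at any such $x$ the sign of $\sigma_{(i)}(\bar v_0,x)$ can be flipped by composing with a transposition of two leaves lying outside $\mathcal{T}_U\cup \mathcal{T}_V$; these corrections are independent and yield the required $\bar v\in V\cap G^+_{(i)}(Y_0,Y_1)$, completing the argument.
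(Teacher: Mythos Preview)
Your identification of $\mathcal{T}_W$ and the reduction to constructing an element $\bar v\in G$ that fixes $\mathcal{T}_V$ and agrees with $w$ on $\mathcal{T}_U$ are correct and match the paper's strategy. The genuine gap is in your final paragraph, where you claim the sign corrections ``are independent''. They are not.

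The defining conditions of $G^+_{(i)}(Y_0,Y_1)$ are indexed by base vertices $v\in V_{t_s}$ and read $\Sgn_{(i)}(g,S_{Y_s}(v))=1$, a \emph{product} of local signs over the set $S_{Y_s}(v)=\bigcup_{r\in Y_s}S(v,r)$. Flipping the local sign at a single vertex $x$ changes $\Sgn_{(i)}(\bar v_0,S_{Y_s}(v))$ for \emph{every} $v$ with $d(v,x)\in Y_s$, not just for $v=x$; in particular, if $0\notin Y_s$ then toggling at $x$ does not even affect the condition at $x$ itself. Thus your procedure of fixing the failing conditions one base vertex at a time by flipping at that same vertex simply does not repair the correct condition, and each flip perturbs many neighbouring conditions. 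A second, smaller problem: a base vertex $x$ at which the condition fails can lie in the interior of $\mathcal{T}_U\cup\mathcal{T}_V$, so it need not have two neighbours outside that subtree to transpose.

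This dependency is exactly why the paper's proof is lengthy. There one corrects the sign condition at a given base vertex $v$ by modifying the local action at a vertex $x$ at distance $\max(Y_s)$ from $v$, chosen in the branch of $v$ pointing away from $\gamma$ (and from $\mathcal{T}\cup\mathcal{T}'$), so that $x\in S_{Y_s}(v)$ but $x\notin S_{Y_s}(v')$ for the other base vertices already treated. The corrections are organized in layers moving outward from the geodesic $\gamma$ joining the centres of the two trees, alternating between the $Y_0$ and $Y_1$ conditions, and the whole scheme is packaged as a descending chain of nonempty compact sets $\Omega_n\subseteq\Aut(T)^+$ whose intersection yields the desired element. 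The technical Lemmas~\ref{Lemme Omega 0 est non vide} and~\ref{Lemme Omega n est non vide} carry out this layered correction; your last paragraph would have to be replaced by an argument of that kind.
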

	\begin{proof}
		To shorten the proof and for clarity of the argument, parts of the reasoning are proved in Lemmas \ref{Lemme Omega 0 est non vide} and \ref{Lemme Omega n est non vide} below. Since the proof is quite long and technical, we start by giving an idea of its structure. We begin the proof by identifying the group $W$ from $U$ and $V$. We then prove that each element of $W$ decomposes as a product of an element of $V$ and an element of $U$. The proof of this decomposition is where the technicalities come from. It is achieved by a compactness argument taking advantage from the fact that $G_{(i)}^+(Y_0,Y_1)$ is defined by local actions conditions.
		
		Let $G= G_{(i)}^+(Y_0,Y_1)$. As announced at the beginning of the proof, we start by identifying $W$. Notice that $\mathfrak{T}_0$ is stable under the action of $G$. Furthermore, for every $g\in \Aut(T)^+$ and for every subtree $\mathcal{T}$ of $T$, we have that $g\Fix_G(\mathcal{T})g^{-1}=\Fix_G(g\mathcal{T})$. In particular, there exist $\mathcal{T}$, $\mathcal{T}'\in \mathfrak{T}_0$ such that $U=\Fix_G(\mathcal{T})$ and $V=\Fix_G(\mathcal{T}')$. Since $V\not \subseteq U$, notice that $\mathcal{T}\not \subseteq \mathcal{T'}$. If $l$ is even, Lemma \ref{Lemme la forme des elements de Sl} ensures that $\mathcal{T}=B_T(e,\frac{l}{2})$ for some edge $e\in E(T)$. Furthermore, since $\mathcal{T}\not \subseteq \mathcal{T'}$ and since $l'\geq l$, there exists a unique vertex $v\in e$ such that $\mathcal{T}'\subseteq T(v,w)\cup B_T(v,\frac{l}{2})$ where $w$ denotes the other vertex of $e$. In that case, we let $\mathcal{T}_W=B_T(v,\frac{l}{2})$. If on the other hand $l$ is odd, Lemma \ref{Lemme la forme des elements de Sl} ensures that $\mathcal{T}=B_T(w,\frac{l+1}{2})$ for some vertex $w\in V(T)$. Furthermore, since $\mathcal{T}\not \subseteq \mathcal{T'}$ and since $l'\geq l$, there exists a unique vertex $v\in B_T(w,1)-\{w\}$ such that $\mathcal{T}'\subseteq T(v,w)\cup B_T(\{v,w\},\frac{l-1}{2})$. In that case, we let $\mathcal{T}_W=B_T(\{v,w\},\frac{l-1}{2})$. In both cases, we set $W=\Fix_G( \mathcal{T}_W)$. By construction, notice that $W\in \mathcal{S}_0\lb l-1\rb$ and that $U\subseteq W$ (since $\mathcal{T}_W\subseteq \mathcal{T}$). Our purpose is therefore to show that $W\subseteq V U$. To this end, let $\alpha\in W$ and let us show the existence of an element $\alpha_0\in U$ such that $\restr{\alpha}{\mathcal{T}'}=\restr{\alpha_0}{\mathcal{T}'}$. We start by explaining why the existence of $\alpha_0$ settles the proof. Indeed, if $\alpha_0$ exists, notice that the automorphism $\alpha_1=\alpha_0^{-1}\circ \alpha$ is an element of $G$ for which $\restr{\alpha_1}{\mathcal{T}'}=\restr{id}{\mathcal{T}'}$. In particular, we have that $\alpha_1\in \Fix_G(\mathcal{T}')$, $\alpha_0\in \Fix_G(\mathcal{T})$ and by construction $\alpha=\alpha_0 \circ \alpha_1$ which proves that $W\subseteq UV$. Applying the inverse map on both sides of the inclusion we obtain that $W\subseteq VU$ which settles the proof.  
		
		Now, let us prove the existence of $\alpha_0$. As announced, at the beginning of the proof, we are going to use a compactness argument taking advantage from the fact that $G_{(i)}^+(Y_0,Y_1)$ is defined by local actions conditions. To be more precise, we are going to define a descending chain of non-empty compact sets $\Omega_n\subseteq \Aut(T)^+$ and an increasing chain of finite subtrees $\mathcal{R}_n$ of $T$ such that $T=\bigcup_{n\in \N} \mathcal{R}_n$ and such that for all $h\in \Omega_n$ we have:
		\begin{itemize}
			\item $h\in \Fix_G(\mathcal{T})$ and $\restr{h}{\mathcal{T}'}=\restr{\alpha}{\mathcal{T}'}$
			\item $\Sgn_{(i)}(h,S_{Y_0}(v))=1$ for all $v$ in $V_{t_0}\cap \mathcal{R}_n$.
			\item $\Sgn_{(i)}(h,S_{Y_1}(v))=1$ for all  $v$ in $V_{t_1}\cap \mathcal{R}_n$.
		\end{itemize} 
		We recall that in the above $t_0=\max(Y_0)\mod2$,  $t_1=(1+\max(Y_1))\mod2$ and $\max(\es)=0$. Let us first show that this settles the existence of $\alpha_0$. Since the $\Omega_n$ form a descending chain of non-empty compact sets in a Hausdorff space we obtain $\bigcap_{n\in \N} \Omega_n\not =\es$. Let $\alpha_0 \in \bigcap_{n\in \N} \Omega_n$. Since $\alpha_0\in \Omega_0$, notice that $ \restr{\alpha_0}{\mathcal{T}}=\restr{\id}{\mathcal{T}}$, $\restr{\alpha_0}{\mathcal{T}'}=\restr{\alpha}{\mathcal{T}'}$. To see that $\alpha_0$ is as desired, we are left to show that $\alpha_0\in G_{(i)}^+(Y_0,Y_1)$. However, for every $v\in V_{t}(T)$, there exists a positive integer $n\in \N$ such that $v\in \mathcal{R}_n$ and since $\alpha_0\in \Omega_{n}$ we have that $\Sgn_{(i)}(\alpha_0,S_{Y_t}(v))=1$. This proves that $\alpha_0$ is as desired.
		
		We are left to define the descending chain of non-empty compact sets $\Omega_n\subseteq \Aut(T)^+$. Suppose that $\max(Y_0)\leq \max(Y_1)$ (the proof for $\max(Y_1)\leq \max(Y_0)$ is similar). Let $\gamma$ be the smallest geodesic of $T$ containing both the centre of $\mathcal{T}$, the centre of $\mathcal{T}'$ and oriented from $\mathcal{T}$ to $\mathcal{T}'$ (note that the centre is either a vertex or an edge depending on the values of $l$ and $l'$). Since $\mathcal{T}\not \subseteq \mathcal{T}'$ and since $l'\geq l$ notice that $\gamma $ contains at least two vertices. 
		\begin{figure}[H]\label{drawing01}\caption{The tree $\mathcal{T}_W$ and the geodesic $\gamma$}
				\begin{center}
					\includegraphics[scale=0.08]{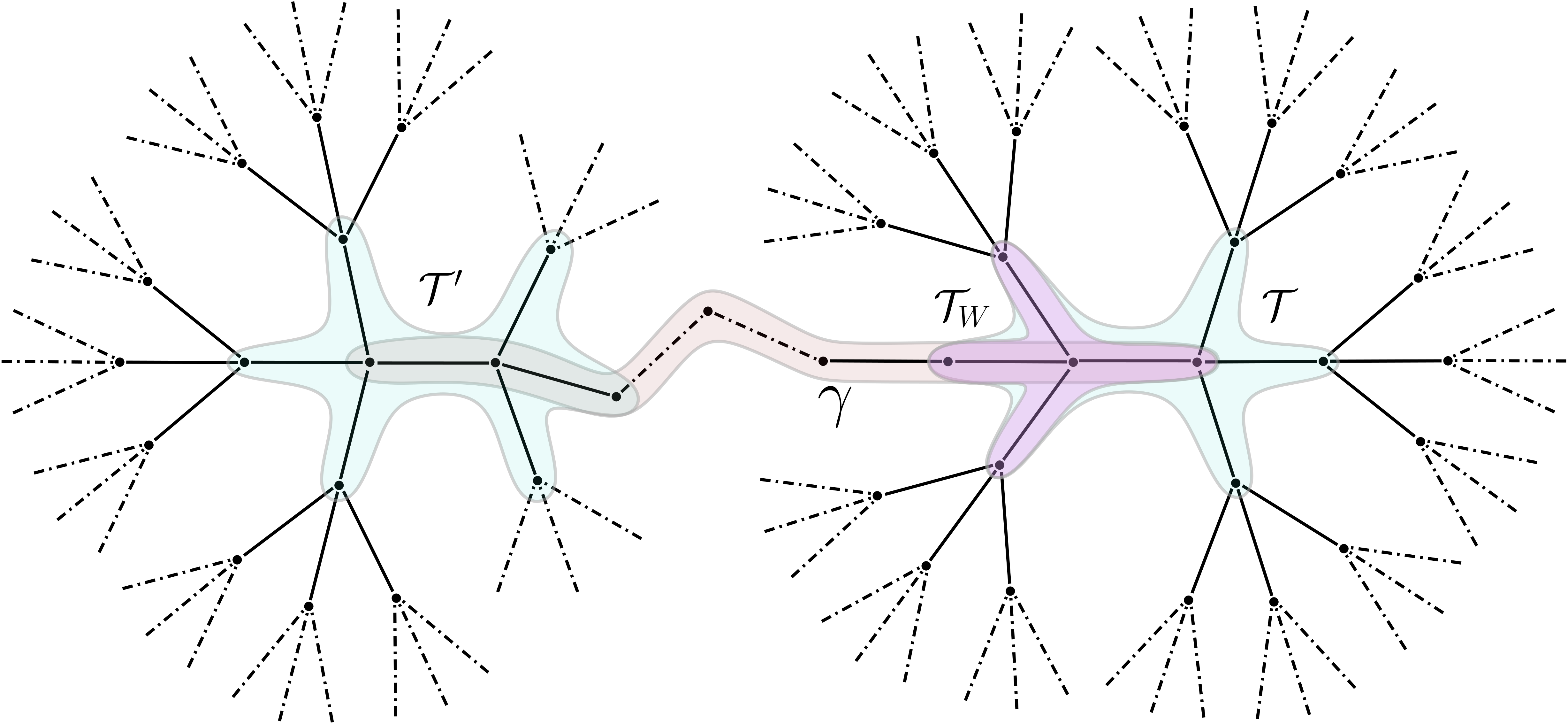}
				\end{center}
		\end{figure}
		The increasing chain of finite subtrees $\mathcal{R}_n$ of $T$ such that $T=\bigcup_{n\in \N} \mathcal{R}_n$ that we are going to use is $\mathcal{R}_n=B_T(\gamma,n)$. We let
		\begin{equation*}
		\Omega_{-1}\qq=\qq \left\{ h \in \Aut(T)^+ \middle\vert \begin{array}{l}\restr{h}{\mathcal{T}}=\restr{\id}{\mathcal{T}} \mbox{ and } \restr{h}{\mathcal{T}'}=\restr{\alpha}{\mathcal{T}'} 
		\end{array}\right\}.
		\end{equation*}
		Since $\alpha\in \Fix_G(\mathcal{T}_W)$ and since $\mathcal{T}_W$ contains every vertices of $\mathcal{T}\cap \mathcal{T}'$ notice that $\Omega_{-1}$ is not empty. Now, since $\max(Y_0)\leq \max(Y_1)$, notice that there exists a unique $r\in \N$ such that $\max(Y_0)+2r\leq \max(Y_1)\leq \max(Y_0)+2r +1$ (where one of this inequality is an equality). We let
		\begin{equation*}
		\Omega_0=\left\{h\in \Omega_{-1}\ \middle\vert \begin{array}{l}
		\Sgn_{(i)}(g,S_{Y_0}(v))=1\qq \mbox{ for each }v\in B_T(\gamma,2r)\cap V_{t_0},\\ 
		\Sgn_{(i)}(g,S_{Y_1}(v))=1\qq \mbox{ for each }v\in B_T(\gamma,0)\cap V_{t_1}
		\end{array}\right\}.
		\end{equation*} 
		Lemma \ref{Lemme Omega 0 est non vide} below ensures that this set is not empty. From there, we define the sets $\Omega_n$ by induction on $n$. For every $n\geq 1$, let $h_n$ an element of $\Omega_{n-1}$ and let
		\begin{equation*}
		\Omega_n\qq=\qq \left\{ h \in \Omega_{n-1}\qq \ \middle\vert \begin{array}{l}
		\q\q \restr{h}{B_T(\gamma,n+{\max(Y_1)})}=\restr{h_{n}}{B_T(\gamma,n+{\max(Y_1)})},\\
		\qq\Sgn_{(i)}(h,S_{Y_0}(w))=1\q \forall w\in B_T(\gamma,n+2r)\cap V_{t_0},\\
		\qq\Sgn_{(i)}(h,S_{Y_1}(w))=1\q \forall w\in B_T(\gamma,n)\cap V_{t_1}
		\end{array}\right\}.
		\end{equation*} 
		For this induction to make sense, it is important for $\Omega_{n}$ to be not empty for all $n\geq 1$. This is proved by Lemma \ref{Lemme Omega n est non vide} below which ensures that $\Omega_{n}$ is a non-empty compact set. The result follows.
	\end{proof}

	Our current purpose is to prove Lemmas \ref{Lemme Omega 0 est non vide} and \ref{Lemme Omega n est non vide}. To this end, we introduce some formalism that will be used in both proofs. For all $v\in V(T)$, we are going to need an automorphism $h_{(v)}\in \Aut(T)^+$ that will be used to create an element of $\omega_{n+1}$ from an element of $\Omega_n$. We start by choosing four functions:
	\begin{equation*}
	\phi_0 \fct{\{1,...,d_0\}}{\Sym(d_0)}{k}{\phi_0(k)}
	\end{equation*}
	\begin{equation*}
	\phi_1 \fct{\{1,...,d_1\}}{\Sym(d_1)}{k}{\phi_1(k)}
	\end{equation*}
	\begin{equation*}
	\tilde{\phi}_0 \fct{\{1,...,d_0\}\times\{1,...,d_0\}}{\Sym(d_0)}{(k,l)}{\tilde{\phi}_0(k,l)}
	\end{equation*}
	\begin{equation*}
	\tilde{\phi}_1 \fct{\{1,...,d_1\}\times\{1,...,d_1\}}{\Sym(d_1)}{(k,l)}{\tilde{\phi}_1(k,l)}
	\end{equation*}
	such that $\phi_t(k)$ is an odd permutation of $\Sym(d_t)$ which fixes $k$ and $\tilde{\phi}_t(k,l)$ is an odd permutation of $\Sym(d_t)$ which fixes $k$ and $l$. 
	
	If $v\in V(T)-\gamma$ we choose $w\in \gamma$ and let $h_{(v)}\in \Aut(T)^+$ be such that:
		\begin{enumerate}
			\item $h_{(v)}\in \Fix_{\Aut(T)^+}(T(p_{\lb w,v\rb }(v),v))$.
			\item $\sigma_{(i)}(h_{(v)},v)=\phi_t(i(p_{[w,v]}(v)))$ where $t\in\{0,1\}$ is such that $v\in V_t$.
		\end{enumerate}
	Notice that for all $v\in V(T)-\gamma$ and every $w,w'\in \gamma$ $p_{\lb w,v\rb}(v) = p_{\lb w',v\rb}(v)$ (recall the definition of $p_{\lb w,v\rb}$ from page \pageref{definition Proj sur la geor}) so that our choice of $w\in \gamma$ does not change the two properties that $h_{(v)}$ must satisfy. 
		\begin{figure}[H]\label{drawing02}\caption{The automorphism $h_{(v)}$}
			\begin{center}
				\includegraphics[scale=0.08]{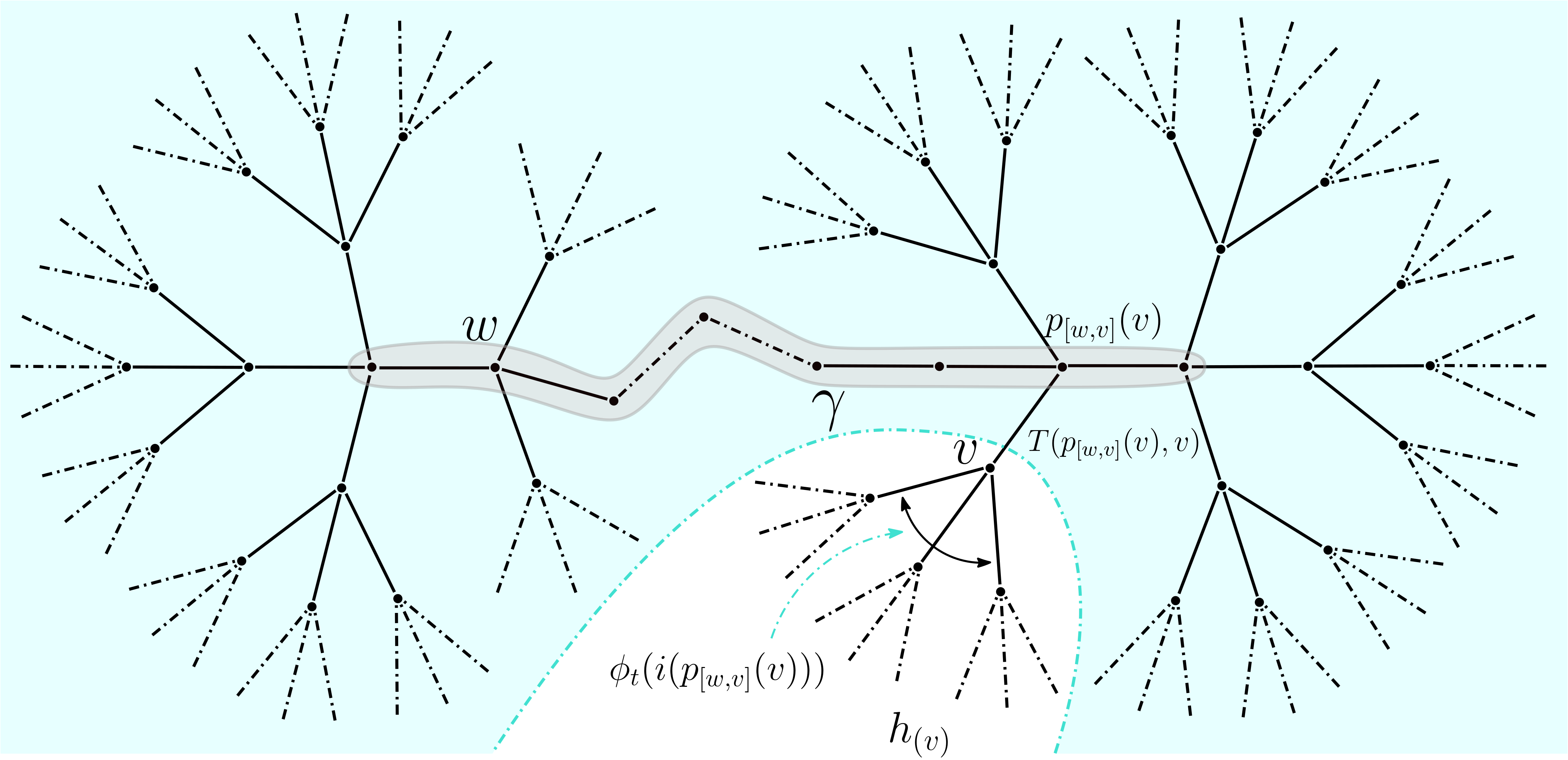}
			\end{center}
		\end{figure}
	
	If $v\in \gamma$, we have two cases. Remember that $\gamma$ has at least two vertices. If $v$ is an end of $\gamma$ let $w$ be the unique vertex of $\gamma$ that is adjacent to $v$ and choose an automorphism $h_{(v)}\in \Aut(T)^+$ such that:
	\begin{enumerate}
		\item $h_{(v)}\in \Fix_{\Aut(T)^+}(T(w,v))$.
		\item $\sigma_{(i)}(h_{(v)},v)=\phi_t(i(w))$ where $t\in\{0,1\}$ is such that $v\in V_t$.
	\end{enumerate}
	On the other hand, if $v$ is not an end of $\gamma$, let $w_1,w_2$ be the two neighbours of $v$ which belong to $\gamma$ and choose an automorphism $h_{(v)}\in \Aut(T)^+$ such that:
	\begin{enumerate}
		\item $h_{(v)}\in\Fix_{\Aut(T)^+}(T(w_1,v)\cup T(w_2,v))$.
		\item $\sigma_{(i)}(h_{(v)},v)=\tilde{\phi}_t(i(w_1),i(w_2))$ where $t\in\{0,1\}$ is such that $v\in V_t$.
	\end{enumerate}
	We are now ready to prove  Lemmas \ref{Lemme Omega 0 est non vide} and \ref{Lemme Omega n est non vide}.
	\begin{lemma}\label{Lemme Omega 0 est non vide}
		The set
		\begin{equation*}
		\Omega_0=\left\{h\in \Omega_{-1}\ \middle\vert \begin{array}{l}
		\Sgn_{(i)}(g,S_{Y_0}(v))=1\qq \mbox{ for each }v\in B_T(\gamma,2r)\cap V_{t_0},\\ 
		\Sgn_{(i)}(g,S_{Y_1}(v))=1\qq \mbox{ for each }v\in B_T(\gamma,0)\cap V_{t_1}
		\end{array}\right\}.
		\end{equation*} 
		is not empty.
	\end{lemma}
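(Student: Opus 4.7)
My plan is to start from any $h_{-1}\in \Omega_{-1}$ (which is non-empty because $\alpha$ fixes $\mathcal{T}_W\supseteq \mathcal{T}\cap \mathcal{T}'$, so that the partial map which is the identity on $\mathcal{T}$ and equals $\alpha$ on $\mathcal{T}'$ is a consistent partial isomorphism on $\mathcal{T}\cup \mathcal{T}'$ and extends to a type-preserving automorphism of $T$) and then to successively correct the failing sign conditions defining $\Omega_0$ by right-multiplication with automorphisms of the form $h_{(x)}$ constructed immediately before the statement.

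The basic computation I rely on is the following: for any vertex $x\in V(T)$ chosen on a branch emanating from some vertex of $\gamma$, the automorphism $h_{(x)}$ fixes the halfspace containing $\mathcal{T}\cup \mathcal{T}'$, so $h_{-1}\circ h_{(x)}\in \Omega_{-1}$; moreover, $h_{(x)}$ can be arranged so that $\sigma_{(i)}(h_{(x)},u)$ is an odd permutation at $u=x$ and the identity at every other $u$ in its support, by letting $h_{(x)}$ permute the branches below $x$ via color-preserving isomorphisms of subtrees. Combining this with the fact that each sphere $S_{Y_t}(v)$ with $v$ separated from the support of $h_{(x)}$ by $x$ is $h_{(x)}$-invariant (its intersection with the support being a union of spheres around $x$), I conclude that right-composition by $h_{(x)}$ flips $\Sgn_{(i)}(\cdot,S_{Y_t}(v))$ precisely when $d_T(v,x)\in Y_t$ and the vertex types match. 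I will then correct the failing $Y_0$-conditions at $v\in B_T(\gamma,2r)\cap V_{t_0}$ in order of decreasing distance $d(v,\gamma)$: for each such $v$, I pick $x_v\in S(v,\max(Y_0))$ in a fresh outgoing branch of $v$ pointing away from $\gamma$, which is feasible since $d_0,d_1\geq 4$ ensures at least three alternative directions. An analogous procedure handles the $Y_1$-conditions at $w\in \gamma\cap V_{t_1}$ using vertices in $S(w,\max(Y_1))$.

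The main obstacle is the combinatorial bookkeeping showing that each $x_v$ can be chosen ``private'' to the single failing condition it is meant to correct, namely that $x_v\notin S_{Y_0}(v')$ for any other tracked $v'\in B_T(\gamma,2r)\cap V_{t_0}$ and $x_v\notin S_{Y_1}(w)$ for every $w\in \gamma\cap V_{t_1}$. The outward-first ordering together with the freshness of the chosen branch of $v$ handles the first constraint. For the second, the relation $\max(Y_0)+2r\leq \max(Y_1)\leq \max(Y_0)+2r+1$ places $x_v$ at distance at most $\max(Y_1)+1$ from $\gamma$, and the type-opposition observed in Remark \ref{remark les sommets ont des types opposers} forces $d_T(w,x_v)$ to have parity opposite to $\max(Y_1)$, so the extremal value $\max(Y_1)$ is automatically excluded; the remaining potentially problematic values in $Y_1$ (of the opposite parity and strictly smaller than $\max(Y_1)$) can be avoided by perturbing the choice of branch among the $d_0-1$ or $d_1-1$ alternatives available at each intermediate vertex along the geodesic $[v,x_v]$. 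Once this bookkeeping is settled, iterating the single-flip corrections produces an explicit element of $\Omega_0$, proving that the set is non-empty.
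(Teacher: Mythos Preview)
Your approach is the same as the paper's: start from any element of $\Omega_{-1}$ and correct the failing sign conditions by right-composition with the local flips $h_{(x)}$. Your observation that $h_{(x)}$ can be built with \emph{trivial} local action at every vertex other than $x$ (by propagating via colour-preserving bijections) is a clean simplification that the paper leaves implicit.

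The gap is in your processing order. With the \emph{decreasing} distance order you propose, when you reach a vertex $v$ at distance $k$ from $\gamma$, the already-processed vertices $v'$ are exactly those with $d(v',\gamma)\ge k$. All the descendants of $v$ in the tree rooted at $\gamma$ (i.e.\ the $v'\in B_T(\gamma,2r)\cap V_{t_0}$ whose geodesic to $\gamma$ passes through $v$) have been processed, and as soon as $2r-k\ge 2$ these descendants sit in \emph{every} outward branch of $v$. Hence a ``fresh outgoing branch'' containing no already-processed $v'$ need not exist, and for $v'$ in the same branch as $x_v$ the distance $d(v',x_v)=d(v,v')+\max(Y_0)-2d(v,u)$ (with $u$ the branching point) can land in $Y_0$. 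Your appeal to the bound $d_0,d_1\ge 4$ yielding ``at least three directions'' does not resolve this, because the obstruction is not a single forbidden direction but potentially all of them.

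The paper avoids this by sweeping in \emph{increasing} distance: at step $\nu$ the failing vertices $w_{\nu,j}$ lie on the sphere $S(\gamma,\nu)$, and the correction point $v_{\nu,j}$ is chosen at distance $\max(Y_0)$ from $w_{\nu,j}$ in a branch pointing away from $\gamma$. All previously-fixed vertices $w\in B_T(\gamma,\nu)\cap V_{t_0}$ then satisfy $d(w,v_{\nu,j})=d(w,w_{\nu,j})+\max(Y_0)>\max(Y_0)$ automatically, because the geodesic $[w,v_{\nu,j}]$ is forced through $w_{\nu,j}$. No freshness bookkeeping is needed. If you simply reverse your order to increasing distance, your argument goes through.

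A minor point: the constraint you discuss in your last paragraph, $x_v\notin S_{Y_1}(w)$ for $w\in\gamma\cap V_{t_1}$, is not the one that matters. During the $Y_0$ phase it is harmless to disturb the $Y_1$-signs, since the $Y_1$ phase comes afterwards. The relevant interaction is the reverse: during the $Y_1$ phase one must check that the correction point $x_w\in S(w,\max(Y_1))$ satisfies $x_w\notin S_{Y_0}(v)$ for all $v\in B_T(\gamma,2r)\cap V_{t_0}$. This follows directly from $d(v,x_w)\ge \max(Y_1)-2r\ge \max(Y_0)$ together with the type-opposition of Remark~\ref{remark les sommets ont des types opposers}, which rules out equality; no branch-perturbation argument is needed.
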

	\begin{proof}
		We recall that $\max(Y_0)\leq \max(Y_1)$, that $r\in \N$ is the unique integer such that $\max(Y_0)+2r\leq \max(Y_1)\leq \max(Y_0)+2r +1$, that $t_0=\max(Y_0) \mod 2$ and that $t_1=\max(Y_1)+1 \mod 2$. Remember from the proof of Proposition \ref{Proposition la premiere etape de la fcatorization pour GY1Y_2} that $\Omega_{-1}$ is not empty and let $h_0\in \Omega_{-1}$. We are going to modify the element $h_0$ with the automorphisms $h_{(v)}$ in order to obtain an element of $\Omega_{0}$. A concrete example of the procedure is given on a $4$-regular tree with $Y_0=\{0\}$ and $Y_1=\{1,2\}$ by figures \ref{drawing03}, \ref{drawing04} and \ref{drawing05}. In those figures:
		\begin{itemize}
			\item The hollow vertices are those concerned by the current and previous steps.
			\item The vertices circled in purple are the vertices for which we desire to change the sign $\Sgn_{(i)}(g,S_{Y_0}(v))$ or $\Sgn_{(i)}(g,S_{Y_1}(v))$ (depending on the step) without affecting the sign of other hollow vertices.
			\item The vertices circled in yellow are the vertices for which a change of the local action is applied in order to fulfil the desired change of sign (note that for our choice $Y_0=\{0\}$ those vertices are also the vertices circled in purple). 
		\end{itemize}
		Let $\{w_{0,0},...,w_{0,m_0}\}$ be the set of vertices $w\in B_T(\gamma,0)\cap V_{t_0}$ such that $\Sgn_{(i)}(h_0,S_{Y_0}(w))=-1$. For all $j=0,1,...,m_0$ we choose a vertex $$v_{0,j}\in \bigcap_{w\in \gamma-\{w_{0,j}\}}{}T(w_{0,j},w)$$ such that $d(v_{0,j},w_{0,j})=\max(Y_0)$. 	
		In particular, notice that $v_{0,j}\in S_{Y_0}(w_{0,j})$ but that $v_{0,j}\not \in S_{Y_0}(w)$ for every $w \in B_T(\gamma,0)\cap V_{t_0} $. Furthermore, since $\Sgn_{(i)}(h_0,S_{Y_0}(w_{0,j}))=-1$, $\restr{h_0}{\mathcal{T}}= \restr{\id}{\mathcal{T}}$, $\restr{h_0}{\mathcal{T}'}= \restr{\alpha}{\mathcal{T}'}$ and due to the form of $\mathcal{T}$ and $\mathcal{T}'$, the vertices $v_{0,j}$ must be such that the automorphisms $h_{(v_{0,0})},..., h_{(v_{0,m_0})}$ fix $\mathcal{T}\cup \mathcal{T}'$ pointwise. In particular, the automorphism 
		$$h_{0,0}= h_0 \circ h_{(v_{0,0})}\circ ...\circ h_{(v_{0,m_0})}$$ satisfies  $\restr{h_{0,0}}{\mathcal{T}}=\restr{h_0}{\mathcal{T}}=\restr{\id}{\mathcal{T}}$, $ \restr{h_{0,0}}{\mathcal{T}'}=\restr{h_0}{\mathcal{T}'}=\restr{\alpha}{\mathcal{T}'}$ and $$\Sgn_{(i)}(h_{0,0},S_{Y_0}(w))=1\qq \forall w\in \gamma\cap V_{t_0}.$$
		\begin{figure}[H]\caption{Step I of the proof of Lemma \ref{Lemme Omega 0 est non vide}} \label{drawing03}
			\includegraphics[scale=0.08]{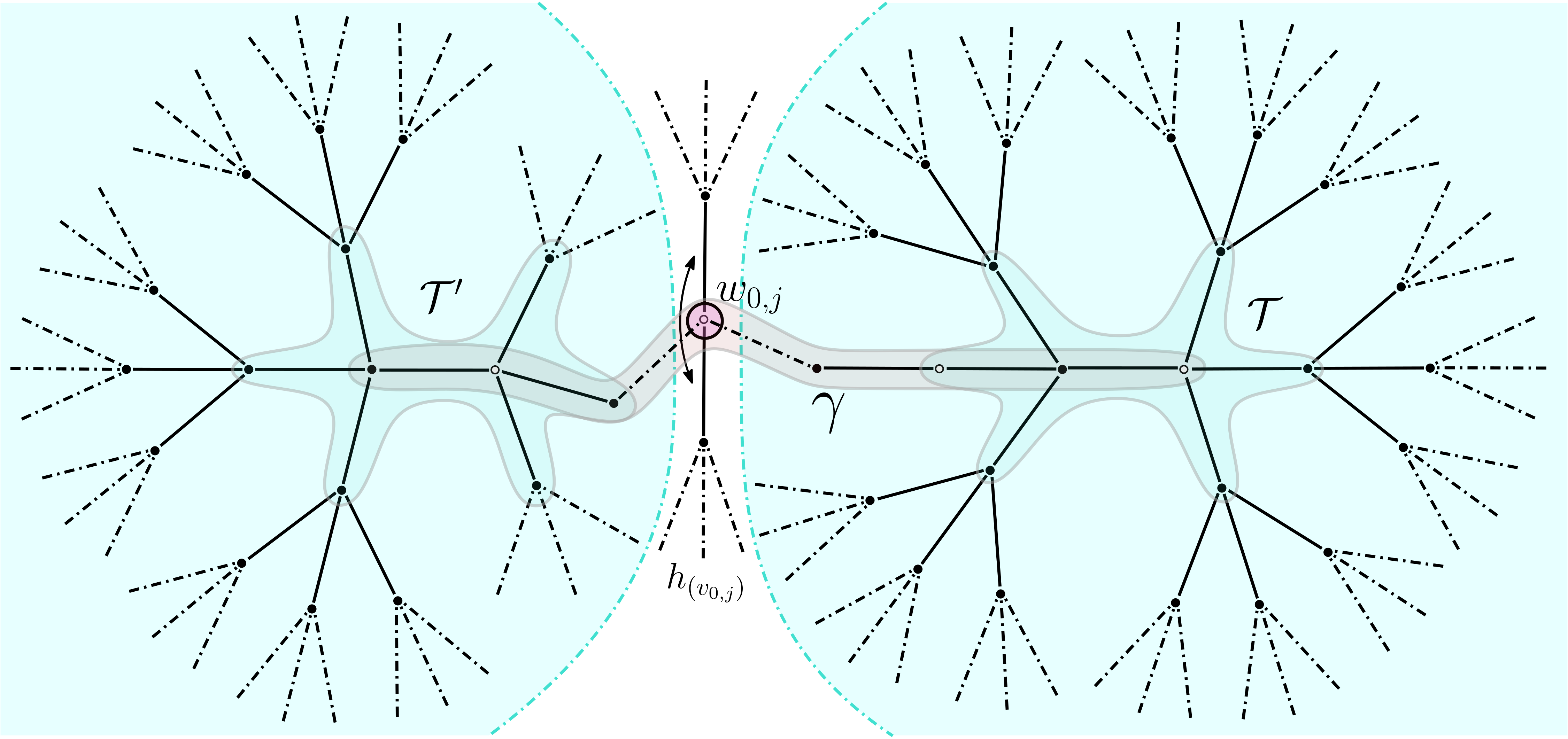}
		\end{figure}
		
		If $r\not=0$, we iterate this procedure. For every $1\leq \nu\leq 2r$, let $\{w_{\nu,0},...,w_{\nu,m_{\nu}}\}$ be the set of vertices $w\in B_T(\gamma,\nu)\cap V_{t_0}$ such that $$\Sgn_{(i)}(h_{\nu-1,0},S_{Y_0}(w))=-1.$$ 
		For all $j=0,1,...,m_\nu$ we choose a vertex $$v_{\nu,j}\in \bigcap_{w\in B_T(\gamma,\nu)\cap V_{t_0}-\{w_{\nu,j}\}}{}T(w_{\nu,j},w)$$ such that $d(v_{\nu,j},w_{\nu,j})=\max(Y_0)$. Hence, notice that $v_{\nu,j}\in S_{Y_0}(w_{\nu,j})$ but that $v_{\nu,j}\not \in S_{Y_0}(w)$ for every $w\in B_T(\gamma,\nu)\cap V_{t_0}-\{w_{\nu,j}\}$. Furthermore, notice that the automorphisms $h_{(v_{\nu,0})}, ..., h_{(v_{\nu,m_\nu})}$ fix $\mathcal{T}\cup \mathcal{T}'$ pointwise. In particular, the automorphism  
		$$h_{\nu,0}=h_{\nu-1,0} \circ h_{(v_{\nu,0})}\circ ...\circ h_{(v_{\nu,m_\nu})}$$ satisfies that  $\restr{h_{\nu,0}}{\mathcal{T}}=\restr{h_{\nu-1,0}}{\mathcal{T}}=\restr{\id}{\mathcal{T}}$, $ \restr{h_{\nu,0}}{\mathcal{T}'}=\restr{h_{\nu-1,0}}{\mathcal{T}'}=\restr{\alpha}{\mathcal{T}'}$ and $$\Sgn_{(i)}(h_{\nu,0},S_{Y_0}(w))=1\qq \forall w\in B_T(\gamma,\nu)\cap V_{t_0}.$$
		\begin{figure}[H]\caption{Step II of the proof of Lemma \ref{Lemme Omega 0 est non vide}} \label{drawing04}
			\includegraphics[scale=0.08]{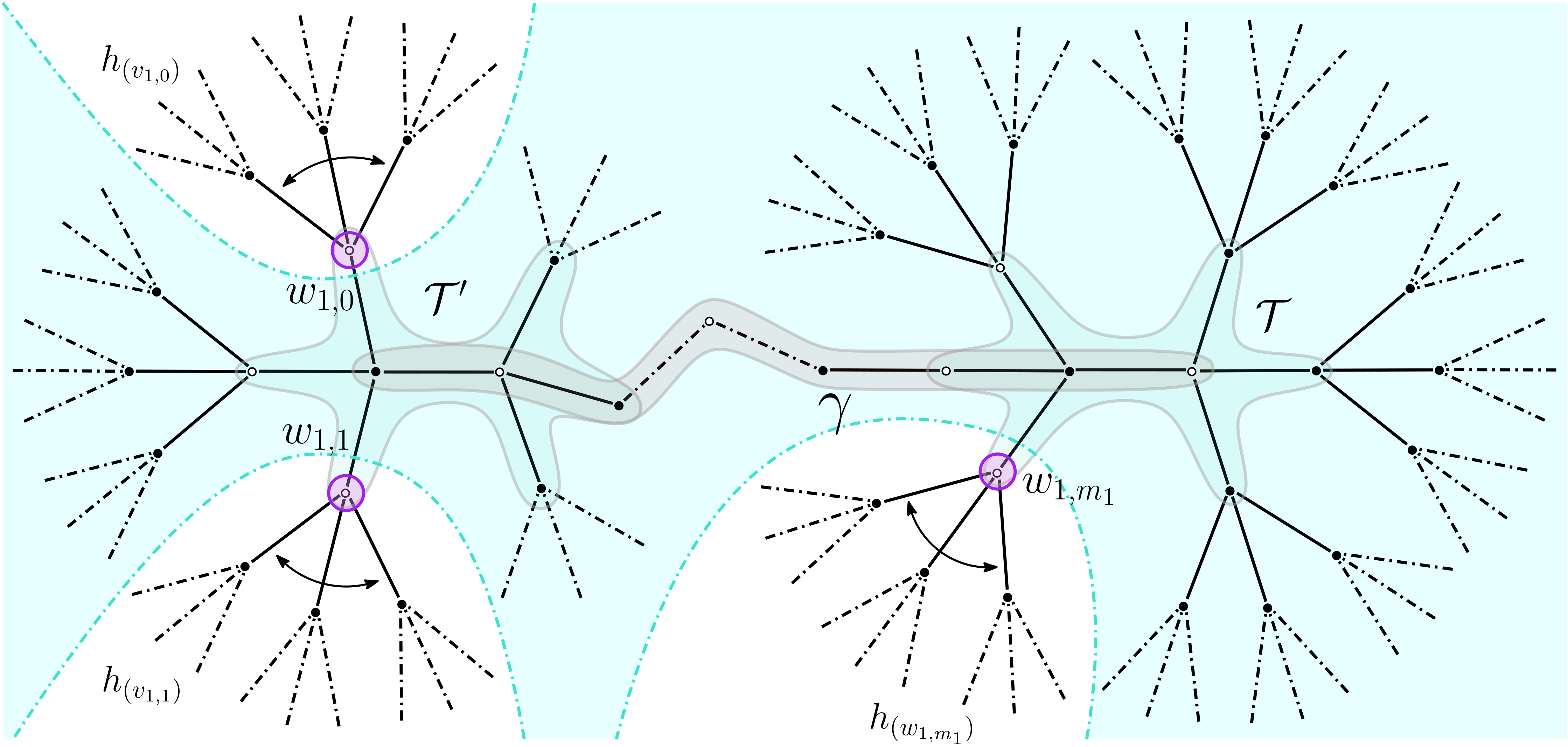}
		\end{figure}
		Consider the element $h_{2r,0}$ that we have just constructed. This element behaves as desired for the condition given by $Y_0$ on the vertices of $B_T(\gamma,2r)\cap V_{t_0}$. However, nothing ensures that the condition given by $Y_1$ on the vertices of $B_T(\gamma,0)\cap V_{t_1}$ is yet satisfied. We now take care of this task. Let $\{w_0,...,w_m\}$ be the set of vertices $w\in B_T( \gamma,0)\cap V_{t_1}$ such that $\Sgn_{(i)}(h_{2r,0},S_{Y_1}(w))=-1$. For all $j=0,1,...,m$ let $v_j\in\bigcap_{w\in \gamma-\{w_j\}} T(w_j,w)$ such that $d(v_j,w_j)=\max(Y_1)$.
		In particular, notice that $v_{j}\in S_{Y_1}(w_{j})$ but that $v_{j}\not \in S_{Y_1}(w)$ for every $w\in B_T(\gamma,0) \cap V_{t_1}$. Furthermore, since $\max(Y_0)+2r\leq \max(Y_1)$ notice  from Remark \ref{remark les sommets ont des types opposers} that $v_j\not \in S_{Y_0}(v)$ for every $v\in B_T(\gamma,2r)\cap  V_{t_0}$. On the other hand, just as before, the automorphisms $h_{(v_{0})}, ..., h_{(v_{m})}$ fix $\mathcal{T}\cup \mathcal{T}'$ pointwise. In particular, $h_{1}= h_{2r,0}\circ h_{(v_{0})}\circ ...\circ h_{(v_{m})}$ satisfies:
		\begin{itemize}
			\item $\restr{h_{1}}{\mathcal{T}}=\restr{h_{2r,0}}{\mathcal{T}}=\restr{\id}{\mathcal{T}}$ and $\restr{h_{1}}{\mathcal{T}'}=\restr{h_{2r,0}}{\mathcal{T}'}=\restr{\alpha}{\mathcal{T}'}$.
			\item $\Sgn_{(i)}(h_{1},S_{Y_0}(w))=1\qq \forall w\in B_T(\gamma,2r)\cap V_{t_0}.$
			\item $\Sgn_{(i)}(h_{1},S_{Y_1}(w))=1\qq \forall w\in B_T(\gamma,0)\cap V_{t_1}.$
		\end{itemize} 
		\begin{figure}[H]\caption{Step III of the proof of Lemma \ref{Lemme Omega 0 est non vide}} \label{drawing05}
			\includegraphics[scale=0.08]{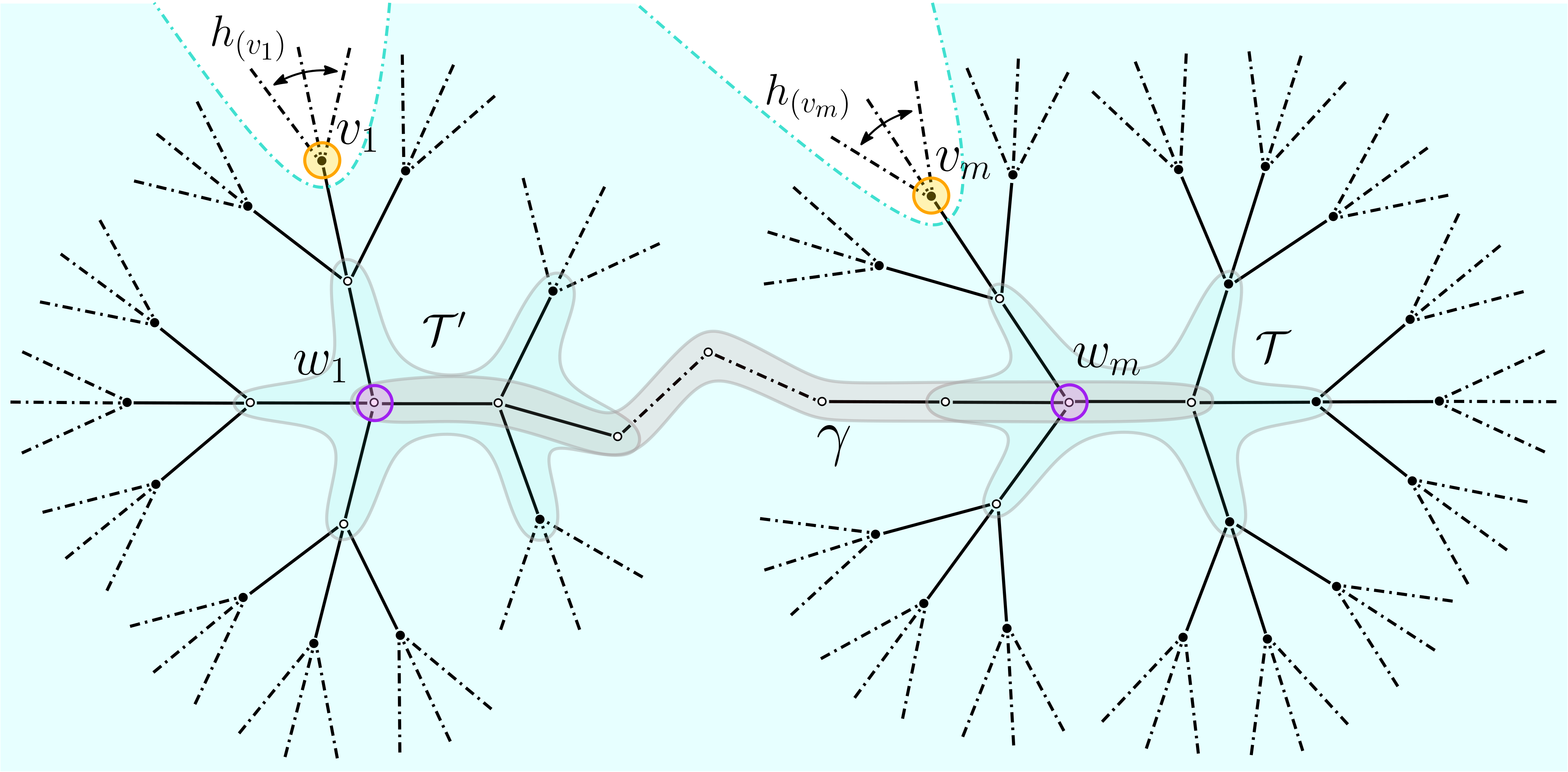}
		\end{figure}
		This proves that $h_1\in \Omega_0$ and therefore that $\Omega_0$ is not empty. 
	\end{proof}
	\begin{lemma}\label{Lemme Omega n est non vide}
		For all $n\geq 1$, the set	
		\begin{equation*}
		\Omega_n\qq=\qq \left\{ h \in \Omega_{n-1}\qq \ \middle\vert \begin{array}{l}
		\q\q \restr{h}{B_T(\gamma,n+{\max(Y_1)})}=\restr{h_{n}}{B_T(\gamma,n+{\max(Y_1)})},\\
		\qq\Sgn_{(i)}(h,S_{Y_0}(w))=1\q \forall w\in B_T(\gamma,n+2r)\cap V_{t_0},\\
		\qq\Sgn_{(i)}(h,S_{Y_1}(w))=1\q \forall w\in B_T(\gamma,n)\cap V_{t_1}
		\end{array}\right\}.
		\end{equation*}
		is a non-empty compact subsets of $\Aut(T)^+$. 
	\end{lemma}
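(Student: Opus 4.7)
The plan is to establish compactness and non-emptiness separately, with the latter running by induction on $n$ and closely mirroring the construction of Lemma \ref{Lemme Omega 0 est non vide}.

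Compactness is immediate. The equality $\restr{h}{B_T(\gamma,n+\max(Y_1))}=\restr{h_n}{B_T(\gamma,n+\max(Y_1))}$ places $h$ in the coset $h_n\cdot \Fix_{\Aut(T)^+}(B_T(\gamma,n+\max(Y_1)))$, which is a compact subset of $\Aut(T)^+$ because the pointwise fixator of a finite subtree is a compact open subgroup. Each sign condition $\Sgn_{(i)}(h,S_{Y_t}(w))=1$ depends only on $\restr{h}{B_T(w,\max(Y_t)+1)}$, hence is clopen. Since $\Omega_{n-1}$ is compact by induction and therefore closed in $\Aut(T)^+$, the set $\Omega_n$ is a closed subset of a compact set, and thus compact.

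For non-emptiness, starting from $h_n\in\Omega_{n-1}$ I repair the signs on the ``new'' vertices: the $w\in V_{t_0}$ with $d(w,\gamma)=n+2r$ for the $Y_0$-condition and the $w\in V_{t_1}$ with $d(w,\gamma)=n$ for the $Y_1$-condition. For each offending $w$ of the relevant type, I pick an auxiliary $v$ at the prescribed distance $\max(Y_0)$ or $\max(Y_1)$ from $w$, going directly away from $\gamma$, lying in the intersection $\bigcap_{w'}T(w,w')$ over the other currently controlled vertices $w'$ of the same type. Such a $v$ belongs to no other sphere being controlled, so post-composition with $h_{(v)}$ flips exactly the sign at $w$ and leaves every other controlled sign unchanged. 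The critical extra requirement is that the subtree on which $h_{(v)}$ acts non-trivially --- the cone at $v$ pointing away from $\gamma$ --- be disjoint from $B_T(\gamma,n+\max(Y_1))$, so that the restriction condition is preserved. Since $v$ sits at distance $d(w,\gamma)+\max(Y_0)$ or $d(w,\gamma)+\max(Y_1)$ from $\gamma$, and the hypotheses $\max(Y_0)+2r\leq\max(Y_1)\leq\max(Y_0)+2r+1$ force this distance to reach $n+\max(Y_1)$, the cone sits outside the preserved ball. Iterating over the offending vertices in the three-step style of Lemma \ref{Lemme Omega 0 est non vide} produces an element of $\Omega_n$.

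The main obstacle is the combinatorial bookkeeping required to ensure that every modification is simultaneously compatible with the restriction to $B_T(\gamma,n+\max(Y_1))$, the sign conditions inherited from $\Omega_{n-1}$, and the signs corrected earlier in the same iteration. The tight case $\max(Y_1)=\max(Y_0)+2r+1$ is the most delicate, since the modified cone then only just avoids the preserved ball; tracking this inequality together with the fact that $h_{(v)}$ fixes by construction the entire half-tree on the $\gamma$-side of $v$ is enough to close the argument exactly as in Lemma \ref{Lemme Omega 0 est non vide}.
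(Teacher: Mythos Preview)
Your approach is essentially the paper's: repair the new $Y_0$-signs at layer $n+2r$, then the new $Y_1$-signs at layer $n$, by post-composing with the prescribed $h_{(v)}$, and read off compactness from the coset description. One inequality slips, however. In the tight case $\max(Y_1)=\max(Y_0)+2r+1$, the $Y_0$-correction vertex $v$ has $d(v,\gamma)=(n+2r)+\max(Y_0)=n+\max(Y_1)-1$, so the cone of $h_{(v)}$ (which starts at distance $d(v,\gamma)+1$ from $\gamma$) meets the outermost layer of $B_T(\gamma,n+\max(Y_1))$; it does \emph{not} ``only just avoid'' that ball, and the restriction condition with radius $n+\max(Y_1)$ is not literally preserved by your Step~I.

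The paper's own proof accordingly asserts only the weaker preservation $\restr{h_{n+1}}{B_T(\gamma,n-1+\max(Y_1))}=\restr{h_n}{B_T(\gamma,n-1+\max(Y_1))}$, which always holds since $d(v,\gamma)\geq n-1+\max(Y_1)$ in both tight cases. The constructed element still lies in $\Omega_{n-1}$ and satisfies all the sign conditions required of $\Omega_n$; since the reference element $h_n$ appearing in the definition of $\Omega_n$ is an arbitrary member of $\Omega_{n-1}$, one may take the constructed element itself as $h_n$, whereupon membership in $\Omega_n$ is automatic. Apart from this off-by-one in the radius, your sketch and the paper's argument coincide.
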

	\begin{proof}
		We show that $\Omega_n$ is not empty by induction. Lemma \ref{Lemme Omega 0 est non vide} ensures that $\Omega_0$ is not empty. Suppose that $\Omega_{n-1}$ is not empty and let $h_n\in \Omega_{n-1}$ be the automorphism appearing in the definition of $\Omega_n$. Just as in the proof of Lemma \ref{Lemme Omega 0 est non vide}, we are going to modify $h_n$ with the automorphisms $h_{(v)}$ in order to obtain an element of $\Omega_{n}$. A concrete example of the procedure is given by figures \ref{drawing201} and \ref{drawing202} on a $4$-regular tree with $Y_0=\{0\}$ and even $\max{(Y_1)}$ (with the same conventions as before and where the vertices concerned by the current step are circled in pink). Let $\{\tilde{w}_0,...,\tilde{w}_{k}\}$ be the set of vertices $w\in B_T(\gamma,n+2r)\cap V_{t_0}$ such that $\Sgn_{(i)}(h_n,S_{Y_0}(w))=-1$. For each $j=0,1,...,k$ we choose a vertex $\tilde{v}_{j}\in \bigcap_{w\in B_T(\gamma,n+2r)\cap V_{t_0}-\{\tilde{w}_j\}}{}T(\tilde{w}_{j},w)$ such that $d(\tilde{v}_{j},\tilde{w}_{j})=\max(Y_0)$. Hence, notice that $\tilde{v}_j\not \in S_{Y_0}(w)$ for all $w\in B_T(\gamma,n+2r)\cap V_{t_0}-\{\tilde{w}_j\}$. Furthermore, since $\max(Y_1)-1\leq 2r+\max(Y_0)$ notice from Remark \ref{remark les sommets ont des types opposers} that $\tilde{v}_j\not \in S_{Y_1}(w)$ for all $w\in  B_T(\gamma,n-1)\cap V_{t_1}$. Furthermore, since $\Sgn_{(i)}(h_{n},S_{Y_0}(\tilde{w}_{j}))=-1$, $\restr{h_{n}}{\mathcal{T}}= \restr{\id}{\mathcal{T}}$, $\restr{h_{n}}{\mathcal{T}'}= \restr{\alpha}{\mathcal{T}'}$ and due to the form of $\mathcal{T}$ and $\mathcal{T}'$, notice that the automorphisms $h_{(\tilde{v}_{0})}, ..., h_{(\tilde{v}_{k})}$ fix $\mathcal{T}\cup \mathcal{T}'$ pointwise. In particular, $\tilde{h}_n=h_{n} \circ h_{(\tilde{v}_{0})}\circ ...\circ h_{(\tilde{v}_{k})}$ satisfies:
		\begin{itemize}
			\item $\restr{\tilde{h}_n}{B_T(\gamma,n-1+\max{(Y_1)})}=\restr{h_{n}}{B_T(\gamma,n-1+\max(Y_1))}.$
			\item $\Sgn_{(i)}(h_{n},S_{Y_0}(w))=1\qq \forall w\in B_T(\gamma,n+2r)\cap V_{t_0}.$
			\item $\Sgn_{(i)}(h_{n},S_{Y_1}(w))=1\qq \forall w\in B_T(\gamma,n-1)\cap V_{t_1}.$
		\end{itemize} 
		\begin{figure}[H]\caption{Step I of the proof of Lemma \ref{Lemme Omega n est non vide}}\label{drawing201}
			\includegraphics[scale=0.075]{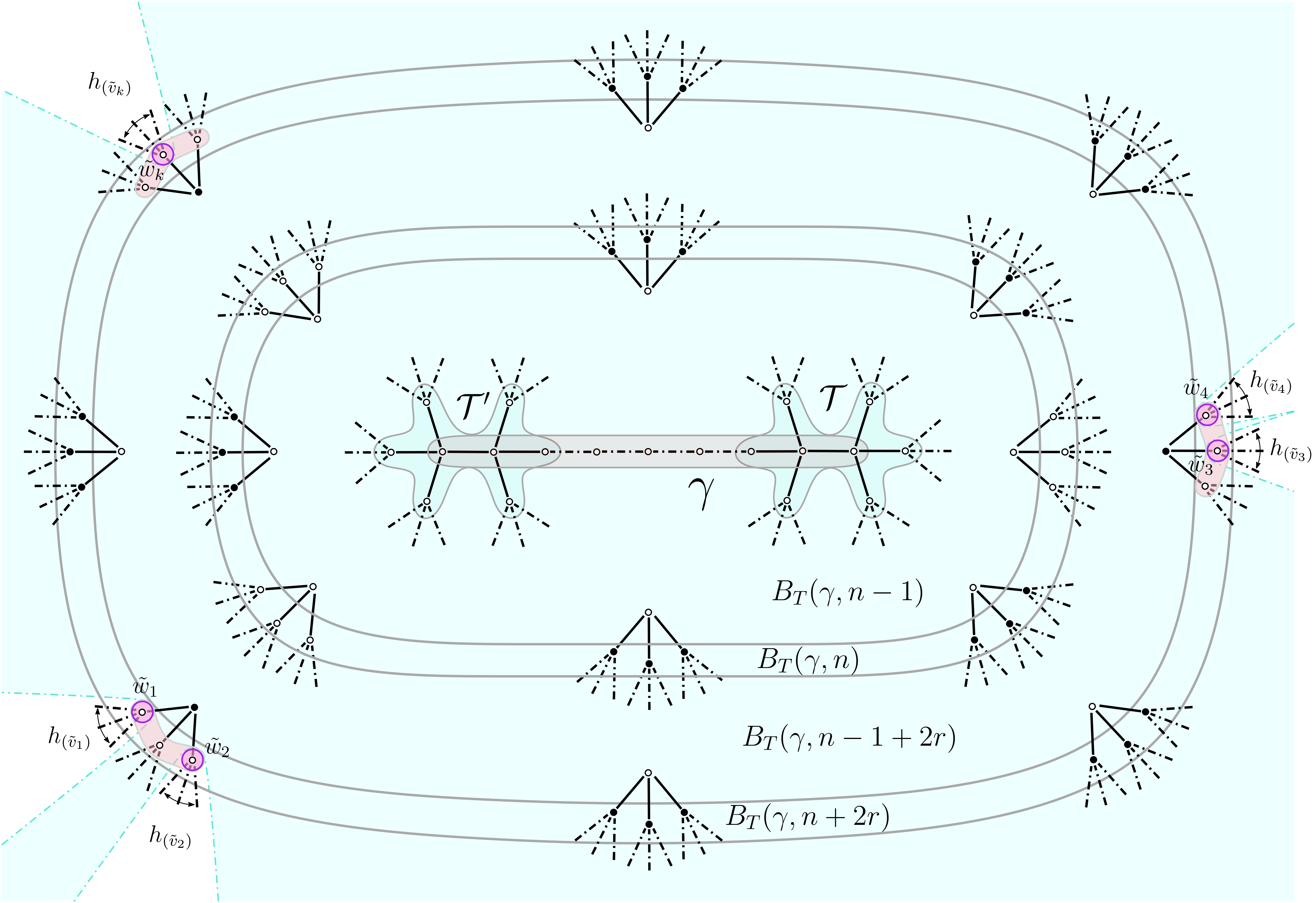}
		\end{figure}
		
		Now, let  $\{w_0,...,w_m\}$ be the set of vertices $w\in B_T(\gamma,n)\cap V_{t_1}$ such that $\Sgn_{(i)}(\tilde{h}_n,S_{Y_1}(w))=-1$. For each $j=0,1,...,m$, we choose a vertex $v_{j}\in \bigcap_{w\in B_T(\gamma,n)-\{w_j\}}{}T(w_{j},w)$ such that $d(v_{j},w_{j})=\max(Y_1)$. Since $\max(Y_0)+2r\leq\max(Y_1)$ notice from Remark \ref{remark les sommets ont des types opposers} that $v_j\not \in S_{Y_0}(w)$ for every $w\in B_T(\gamma,n+2r)\cap V_{t_0}$ and $v_j\not \in S_{Y_1}(w)$ for every $w\in B_T(\gamma,n)\cap V_{t_1}-\{w_j\}$. Just as before, notice that the automorphisms $h_{(v_{0})}, ...,h_{(v_{m})}$ fix $\mathcal{T}\cup \mathcal{T}'$ pointwise. In particular, $h_{n+1}=\tilde{h}_{n} \circ h_{(v_{0})}\circ ...\circ h_{(v_{m})}$ satisfies:
		\begin{itemize}
			\item $\restr{h_{n+1}}{B_T(\gamma,n-1+{\max(Y_1)})}=\restr{h_{n}}{B_T(\gamma,n-1+{\max(Y_1)})}.$
			\item $\Sgn_{(i)}(h_{n+1},S_{Y_0}(w))=1\qq \forall w\in B(\gamma,n+2r)\cap V_{t_0}.$
			\item $\Sgn_{(i)}(h_{n+1},S_{Y_1}(w))=1\qq \forall w\in B(\gamma,n)\cap V_{t_1}.$
		\end{itemize}
		\begin{figure}[H]\caption{Step II of the proof of Lemma \ref{Lemme Omega n est non vide}}\label{drawing202}
			\includegraphics[scale=0.075]{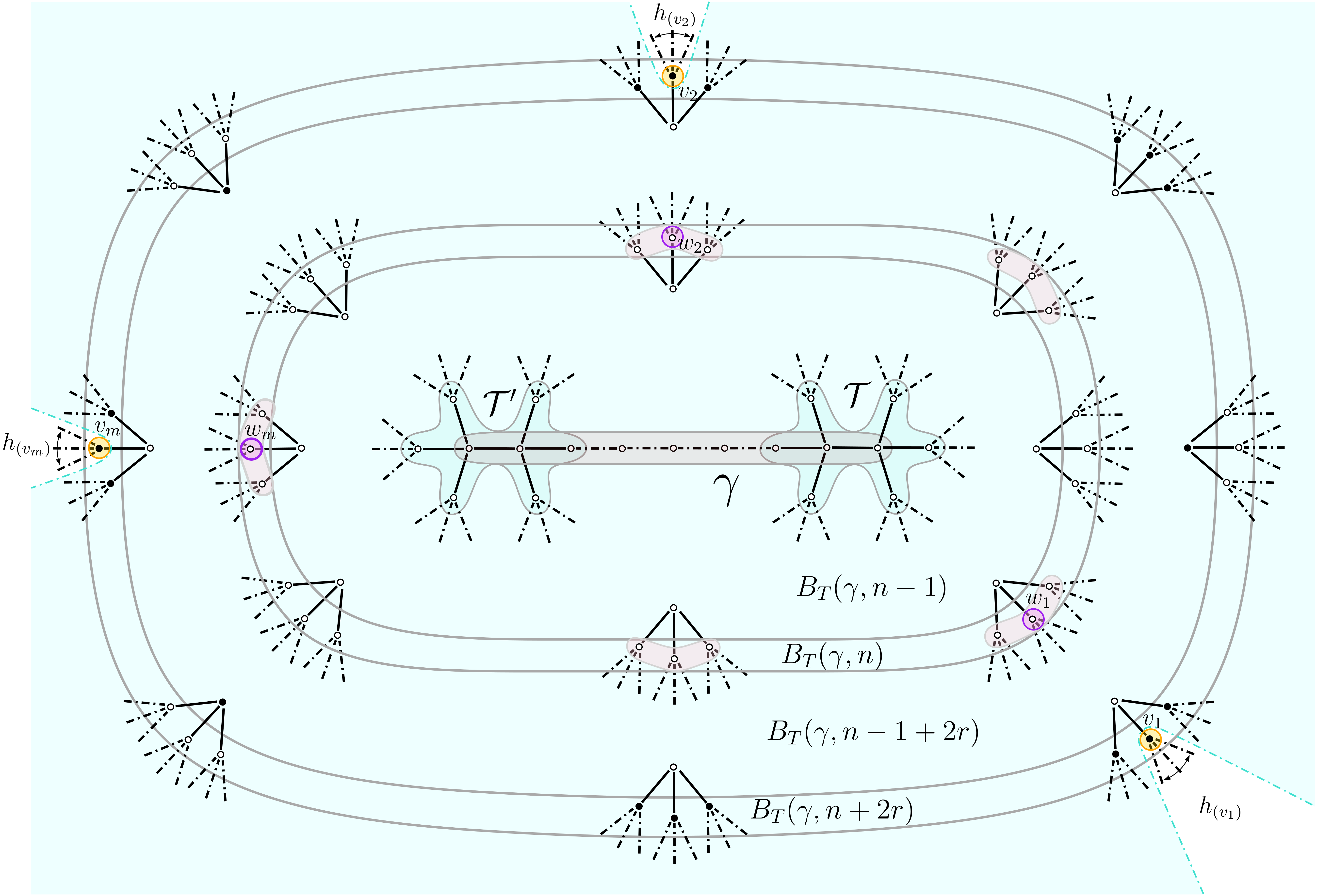}
		\end{figure}
		This proves that $\Omega_n$ is not empty. We now show that $\Omega_n$ is compact for every integer $n\geq 1$. To this end, notice that $\Omega_n$ is a closed subset of $\Aut(T)^+$ and that
		\begin{equation*}
		\Omega_n\qq \subseteq \qq h_{n}\qq \Fix_{\Aut(T)^+}(B_T(\gamma,n+\max(Y_1))).
		\end{equation*}
		Since the right hand side is a compact subset of $\Aut(T)^+$ the results follows.
	\end{proof}
	
	\noindent We are finally able to prove the result announced at the beginning of the Section \ref{Section factorization}.
	\begin{theorem}\label{Theorem S 0 est fertile }
		The generic filtration $\mathcal{S}_{0}$ of $G^+_{(i)}(Y_0,Y_1)$ factorizes$^+$ at all depth $l\geq 1$.
	\end{theorem}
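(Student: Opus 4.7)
The plan is to verify the three clauses in Definition \ref{definition olsh facto} at every depth $l \geq 1$, leaning on Proposition \ref{Proposition la premiere etape de la fcatorization pour GY1Y_2} for the central factorization inclusion and discharging the remaining requirements via elementary tree-geometry arguments anchored in hypothesis \ref{Hypothese H0}.

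For the first clause, Proposition \ref{Proposition la premiere etape de la fcatorization pour GY1Y_2} directly supplies the desired $W \in \mathcal{S}_0\lb l-1\rb$ whenever $V$ is conjugate to an element of $\mathcal{S}_0\lb l'\rb$ with $l' \geq l$. The residual case $l' < l$ is reduced to the previous one. Writing $U = \Fix_G(\mathcal{T})$ and $V = \Fix_G(\mathcal{T}')$ with $\mathcal{T}, \mathcal{T}' \in \mathfrak{T}_0$, Lemma \ref{lemma G satisfies hypotheses H0} translates $V \not\subseteq U$ into $\mathcal{T} \not\subseteq \mathcal{T}'$. Since $\mathcal{T}'$ has the smaller depth-$l'$ shape and the tree offers abundant branching (recall $d_0, d_1 \geq 4$), I construct a subtree $\tilde{\mathcal{T}} \in \mathfrak{T}_0$ of the depth-$l$ shape with $\mathcal{T}' \subseteq \tilde{\mathcal{T}}$ and $\mathcal{T} \not\subseteq \tilde{\mathcal{T}}$, obtained by centering $\tilde{\mathcal{T}}$ near the center of $\mathcal{T}'$ and, if necessary, shifting this center by one step in a direction transverse to $\mathcal{T}$ so that $\tilde{\mathcal{T}}$ does not engulf $\mathcal{T}$. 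Setting $\tilde{V} = \Fix_G(\tilde{\mathcal{T}})$, the group $\tilde{V}$ is conjugate to an element of $\mathcal{S}_0\lb l\rb$, satisfies $\tilde{V} \subseteq V$ and $\tilde{V} \not\subseteq U$, and applying Proposition \ref{Proposition la premiere etape de la fcatorization pour GY1Y_2} to the pair $(U, \tilde{V})$ yields $W \in \mathcal{S}_0\lb l-1\rb$ with $U \subseteq W \subseteq \tilde{V} U \subseteq VU$.

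For the second clause, hypothesis \ref{Hypothese H0} translates $g^{-1}Vg \subseteq U$ into the geometric statement $g\mathcal{T} \subseteq \mathcal{T}'$. Fixing any vertex $x_0 \in \mathcal{T}$, the set $\{g \in G \mid g x_0 \in \mathcal{T}'\}$ is a finite union of cosets of the compact open subgroup $\Fix_G(x_0)$ and is therefore compact; the closed subset $N_G(U, V)$ inherits this compactness. For the third clause (the ``$+$'' refinement), the explicit description of $\mathcal{S}_0\lb l-1\rb$ and $\mathcal{S}_0\lb l\rb$ provided by Lemma \ref{Lemme la forme des elements de Sl} makes the verification direct. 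If $l$ is even, then $U = \Fix_G(B_T(e, l/2))$ and $W = \Fix_G(B_T(v, l/2))$ for some vertex $v$ of $e$; since the other endpoint $w$ of $e$ lies in $B_T(v, 1) \subseteq B_T(v, l/2)$, every element of $W$ fixes both $v$ and $w$, hence stabilizes $B_T(e, l/2)$ and normalizes $U$ via hypothesis \ref{Hypothese H0}. If $l$ is odd, then $U = \Fix_G(B_T(v, (l+1)/2))$ and $W = \Fix_G(B_T(e, (l-1)/2))$ for some edge $e$ containing $v$; the group $W$ then fixes $v$, hence stabilizes every ball centered at $v$, and again normalizes $U$.

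Most of the technical work has been absorbed by Proposition \ref{Proposition la premiere etape de la fcatorization pour GY1Y_2} through the descending compactness argument over the sets $\Omega_n$; the only subtlety I expect to pin down in this theorem is the geometric extension producing $\tilde{\mathcal{T}}$ in the case $l' < l$ of clause one, and this is a routine case analysis comparing the finite subtrees $\mathcal{T}$ and $\mathcal{T}'$ given the freedom of branching in the tree.
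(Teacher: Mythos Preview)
Your proposal is correct and follows essentially the same route as the paper: invoke Proposition~\ref{Proposition la premiere etape de la fcatorization pour GY1Y_2} for the main inclusion in clause~1, and discharge clauses~2 and~3 via hypothesis~\ref{Hypothese H0} and the explicit shape of elements of $\mathcal{S}_0\lb l\rb$ from Lemma~\ref{Lemme la forme des elements de Sl}. The one organizational difference is in clause~1 when $l'<l$: the paper splits further into the subcases $\mathcal{T}'\subseteq\mathcal{T}$ and $\mathcal{T}'\not\subseteq\mathcal{T}$, handling the former directly by picking an intermediate $\mathcal{P}\in\mathfrak{T}_0$ at depth $l-1$ with $\mathcal{T}'\subseteq\mathcal{P}\subseteq\mathcal{T}$ (so that $W=\Fix_G(\mathcal{P})\subseteq V$ already, without a second appeal to Proposition~\ref{Proposition la premiere etape de la fcatorization pour GY1Y_2}), whereas you treat both subcases uniformly by enlarging $\mathcal{T}'$ to a depth-$l$ tree $\tilde{\mathcal{T}}\neq\mathcal{T}$ and then invoking the proposition again --- this works (there are always at least two depth-$l$ trees in $\mathfrak{T}_0$ containing a given $\mathcal{T}'$ of strictly smaller depth), though your phrase ``transverse to $\mathcal{T}$'' is imprecise in the subcase $\mathcal{T}'\subseteq\mathcal{T}$, where any shift of center suffices.
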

	\begin{proof}
		For shortening of the formulation, we let $G=G^+_{(i)}(Y_0,Y_1)$. To prove that $\mathcal{S}_0$ factorizes$^+$ a depth $l\geq 1$, we shall successively verify the three conditions of the Definition \ref{definition olsh facto}.
		
		First, we need to prove that for every  $U$ in the conjugacy class of an element of $\mathcal{S}_0\lb l \rb$ and every subgroup $V$ in the conjugacy class of an element of $\mathcal{S}_0$ with $V \not\subseteq U$, there exists a $W$ in the conjugacy class of an element of $\mathcal{S}_0\lb l -1\rb$ such that $$U\subseteq W \subseteq V U.$$ Let $U$ and $V$ be as above. If $V$ is conjugate to an element of $\mathcal{S}_0\lb l'\rb$ for some $l'\geq l$, the result follows directly from Proposition \ref{Proposition la premiere etape de la fcatorization pour GY1Y_2}. Therefore, let us suppose that $l'<l$. By the definition of $\mathcal{S}_0$ and since $\mathfrak{T}_0$ is stable under the action of $G$, there exist two subtrees $\mathcal{T},\mathcal{T}'\in \mathfrak{T}_0$ such that $U=\Fix_G(\mathcal{T})$ and $V=\Fix_G(\mathcal{T}')$. There are two cases. Either, $\mathcal{T}'\subseteq \mathcal{T}$ and there exists a subtree $\mathcal{P}\in \mathfrak{T}_{0}$ such that $\mathcal{T}'\subseteq \mathcal{P}\subseteq \mathcal{T}$ and $\Fix_G(\mathcal{R})\in \mathcal{S}_0\lb l-1\rb$. In that case $$\Fix_G(\mathcal{T})\subseteq \Fix_G(\mathcal{P})\subseteq \Fix_G(\mathcal{T}')\subseteq \Fix_G(\mathcal{T}')\Fix_G(\mathcal{T})$$ and the result follows. Or else, $\mathcal{T}'\not \subseteq \mathcal{T}$ and since $l'< l$, there exists a subtree $\mathcal{P}\in \mathfrak{T}_0$ such that $\mathcal{T}'\subseteq \mathcal{P}\not= \mathcal{T}$ and $\Fix_G(\mathcal{P})\in \mathcal{S}_0\lb l\rb$. In particular, Proposition \ref{Proposition la premiere etape de la fcatorization pour GY1Y_2} ensures the existence of a subgroup $W\in \mathcal{S}_0\lb l-1\rb$ such that $U\subseteq W \subseteq \Fix_G(\mathcal{P})U\subseteq \Fix_G(\mathcal{T}')U$. This proves the first condition.
		
		Next, we need to prove that $N_{G}(U, V)= \{g\in {G} \lvert g^{-1}Vg\subseteq U\}$ is compact for every $V$ in the conjugacy class of an element of $\mathcal{S}_0$. Just as before, notice that there exists a $\mathcal{T}'\in \mathfrak{T}_0$ such that $V = \Fix_G(\mathcal{T}')$. Since $G$ satisfies the hypothesis \ref{Hypothese H0}, notice that
		\begin{equation*}
		\begin{split}
		N_{G}(U, V)&= \{g\in G \lvert g^{-1}Vg\subseteq U\}=\{g\in G \lvert g^{-1}\Fix_G(\mathcal{T}')g\subseteq \Fix_G(\mathcal{T})\}\\ 
		&=\{g\in G \lvert \Fix_G(g^{-1}\mathcal{T}')\subseteq \Fix_G(\mathcal{T})\}= \{g\in G \lvert g\mathcal{T}\subseteq \mathcal{T}'\}.
		\end{split}
		\end{equation*}
		In particular, since both $\mathcal{T}$ and $\mathcal{T}'$ are finite subtrees of $T$, $N_G(U,V)$ is a compact subset of $G$ which proves the second condition.
		
		Finally, we need to prove for every $W$ in the conjugacy class of an element of $\mathcal{S}_0\lb l-1\rb$ with $U\subseteq W$ that 
		\begin{equation*}
		W\subseteq N_G(U,U) =\{g\in G \mid g^{-1}Ug\subseteq U\}.
		\end{equation*} 
		For the same reasons as before, there exists $\mathcal{R}\in \mathfrak{T}_0$ such that $W= \Fix_G(\mathcal{R})$. Furthermore, since $U\subseteq W$ and since $G$ satisfies the hypothesis \ref{Hypothese H0}, notice that $\mathcal{R}\subseteq \mathcal{T}$. Moreover, since $\Fix_G(\mathcal{R})$ has depth $l-1$, notice that $\mathcal{R}$ contains every interior vertex of $\mathcal{T}$. Since $G$ is unimodular and satisfies the hypothesis \ref{Hypothese H0}, this implies that
		
		\begin{equation*}
		\begin{split}
		\Fix_G(\mathcal{R})&\subseteq \{h\in G \lvert h\mathcal{T}\subseteq \mathcal{T}\}= \{h\in G \lvert \Fix_G(\mathcal{T})\subseteq\Fix_G(h\mathcal{T})\}\\
		&=\{h\in G \lvert h^{-1}\Fix_G(\mathcal{T})h\subseteq\Fix_G(\mathcal{T})\}= N_G(U,U)
		\end{split}
		\end{equation*}
		which proves the third condition.
	\end{proof}

	\subsection{Description of cuspidal representations}\label{section cuspidal rep description}
	
	The purpose of this section is to give a description of the cuspidal representations of $G_{(i)}^+(Y_0,Y_1)$. This is done by Theorem \ref{la classification des reresentations cuspidale} below but requires some preliminaries. We refer to \cite{Semal2021O} for proofs and details about the formalism.
	
	Let $G$ be a non-discrete unimodular totally disconnected locally compact group $G$ and let $\mathcal{S}$ be a generic filtration of $G$ factorizing$^+$ at depth $l$. Then, for every irreducible representation $\pi$ of $G$ at depth $l$, \cite[Theorem A]{Semal2021O} ensures the existence of a unique conjugacy class $C_\pi\in \mathcal{F}_{\mathcal{S}}=\{\mathcal{C}(U)\lvert U\in \mathcal{S}\}$ at height $l$ such that $\pi$ admits non-zero $U$-invariant vectors for any $U\in C_\pi$. The conjugacy class $C_\pi$ is called the \tg{seed} of $\pi$. We define the \textbf{group of automorphisms} $\Aut_{G}(C)$ \textbf{of the seed} $C$ as the quotient $N_G(U)/U$ corresponding to any $U\in C$. This group $\Aut_{G}(C)$ is finite and does not depend up to isomorphism on our choice of $U\in C$. Now, let $p_{U}: N_G(U)\mapsto N_G(U)/U$ denote the quotient map, let
	\begin{equation*}
	\tilde{\mathfrak{H}}_{\mathcal{S}}(U)= \{W \mid \exists g\in G\mbox{ s.t. }gWg^{-1}\in \mathcal{S}\lb l-1 \rb \mbox{ and } U \subseteq  W \}.
	\end{equation*}
	and set $$\mathfrak{H}_{\mathcal{S}}(C)=\{ p_U(W)\lvert W\in \tilde{\mathfrak{H}}_\mathcal{S}(U)\}.$$ Notice that $\mathfrak{H}_{\mathcal{S}}(C)$ does not depend on our choice of representative $U\in C$. 
	\begin{definition}\label{definitoin standard representations}
		An irreducible representation $\omega$ of $\Aut_{G}(C)$ is called $\mathcal{S}$-\tg{standard} if it has no non-zero $H$-invariant vector for any $H\in \mathfrak{H}_{\mathcal{S}}(C)$.
	\end{definition}
	The importance of this notion is given by \cite[Theorem A]{Semal2021O} which ensures that the irreducible representations of $G$ at depth $l$ with seed $C$ are obtained from the $\mathcal{S}$-standard representations of $\Aut_G(C)$ if $\mathcal{S}$ factorizes$^+$ at depth $l$. To be more precise, we recall that every irreducible representation $\omega$ of $\Aut_G(C)\cong N_G(U)/U$ can be lifted to an irreducible representation $\omega \circ p_U$ of $N_G(U)$ acting trivially on $U$ and with representation space $\Hr{\omega}$. The lifted representation can then be induced to $G$. The resulting representation $$T(U,\omega)=\Ind_{N_G(U)}^G(\omega\circ p_{U})$$ is an irreducible representation of $G$ with seed $\mathcal{C}(U)$. Conversely, if $\pi$ is an irreducible representation of $G$ with seed $C$, notice that $\Hr{\pi}^{U}$ is a non-zero $N_G(U)$-invariant subspace of $\Hr{\pi}$ for every $U\in C$. In particular, the restriction $(\restriction{\pi}{N_G(U)}, \Hr{\pi}^U)$ is a representation of $N_G(U)$ whose restriction to $U$ is trivial. This representation passes to the quotient group $N_G(U)/U$ and defines an $\mathcal{S}$-standard representation $\omega_\pi$ of $\Aut_{G}(C)$. 
	
	We now come back to the case we are interested in this paper. Let $T$ be a $(d_0,d_1)$-semi-regular tree with $d_0,d_1\geq 4$, let $V(T)=V_0\sqcup V_1$ be the associated bipartition and let 
	$$\mathfrak{T}_{0}=\{B_T(v,r)\lvert v\in V(T), r\geq 1\}\sqcup\{B_T(e,r)\lvert e\in E(T), r\geq 0\}.$$
	Let $i$ be a legal coloring of $T$, let $Y_0,Y_1\subseteq \N$ be two finite subsets and consider the group $G_{(i)}^+(Y_0,Y_1)$. For shortening of the formulation, let $G=G_{(i)}^+(Y_0,Y_1)$. We have shown in Sections \ref{chapter generic filtration for Gi+YY} and \ref{Section factorization} that
	\begin{equation*}
	\mathcal{S}_{0}=\{\Fix_G(\mathcal{T})\lvert \mathcal{T}\in \mathfrak{T}_{0}\}.
	\end{equation*}
	is a generic filtration of $G_{(i)}^+(Y_0,Y_1)$ that factorizes$^+$ at all depth $l\geq 1$. In particular, \cite[Theorem A]{Semal2021O} provides a bijective correspondence between irreducible representations of $G_{(i)}^+(Y_0,Y_1)$ at depth $l$ with seed $C \in \mathcal{F}_{\mathcal{S}_0}$ and the $\mathcal{S}_0$-standard representations of $\Aut_G(C)$. We start by identifying those seeds and show that the cuspidal representations of $G$ are precisely the irreducible representations of $G$ at depth $l\geq 1$ with respect to $\mathcal{S}_0$. In light of Lemma \ref{Lemme la forme des elements de Sl} we consider the partition $\mathfrak{T}_0=\bigsqcup_{l\in \N}\mathfrak{T}_0\lb l\rb$ where:
	\begin{itemize}\label{de la merde one definit mathfrak T 0}
		\item $\mathfrak{T}_0\lb l\rb = \{B_T(e,\frac{l}{2})\lvert e\in E(T)\}$ if $l$ is even.
		\item $\mathfrak{T}_0\lb l \rb=\{ B_T\big(v,(\frac{l+1}{2})\big)\mid v\in V(T)\}$ if $l$ is odd.
	\end{itemize} 
	Notice that for all $l\in \N$, $\mathfrak{T}_0\lb l\rb$ is stable under the action of $G$. Furthermore, notice if $l$ is even or if $l$ is odd and $G$ is transitive on the vertices that the set $\mathfrak{T}_0\lb l\rb$ consists of a single $G$-orbit. On the other hand, if $l$ is odd and $G$ has two orbits of vertices notice that the set $\mathfrak{T}_0\lb l\rb$ consists of two $G$-orbits namely $\{B_T\big(v,(\frac{l+1}{2})\big)\mid v\in V_0\}$ and $\{B_T\big(v,(\frac{l+1}{2})\big)\mid v\in V_1\}$. In particular, in light of Lemma \ref{lemma G satisfies hypotheses H0}, there are either one or two elements of $\mathcal{F}_{\mathcal{S}_0}=\{\mathcal{C}(U)\lvert U\in \mathcal{S}_0\}$ at height $l$ and each such element is of the form
	$$C=\{\Fix_G(\mathcal{T})\lvert \mathcal{T}\in \mathcal{O}\}$$
	where $\mathcal{O}$ is a $G$-orbit of $\mathfrak{T}_0\lb l \rb$. We deduce easily that the irreducible representations of $G_{(i)}^+(Y_0,Y_1)$ at depth $l\geq 1$ with respect to $\mathcal{S}_0$ are the cuspidal representations of $G$. Indeed, $\pi$ is an irreducible representation at depth $l\geq 1$ with respect to $\mathcal{S}_0$ if and only if $\pi$ does not admit a non-zero $V$-invariant vector for any $V$ in a conjugacy class $C$ at depth $0$ that is for any $V\in \{\Fix_G(e)\lvert e\in E(T)\}$. Now, let $\pi$ be a cuspidal representations of $G$, let $C_\pi\in \mathcal{F}_{\mathcal{S}_0}$ be the seed of $\pi$, let $U\in C_\pi$ and let $\mathcal{T}\in \mathfrak{T}_0$ be such that $U=\Fix_G(\mathcal{T})$. Since $\mathcal{S}_0$ factorizes $^+$ at all depth $l\geq 1$, \cite[Theorem A]{Semal2021O} ensures that $\pi$ is induced from an irreducible representation of $N_G(U)$ that passes to the quotient $\Aut_G(C_\pi)\cong N_G(U)/U$. Furthermore, since $G$ satisfies the hypothesis \ref{Hypothese H0}, notice that 
	\begin{equation*}
	\begin{split}
	N_G(U)= \{g\in G\lvert gUg^{-1}=U\}
	&=\{g\in G\lvert g\Fix_G(\mathcal{T})g^{-1}=\Fix_G(\mathcal{T})\}\\
	&=\{g\in G\lvert \Fix_G(g\mathcal{T})=\Fix_G(\mathcal{T})\}\\
	&=\{g\in G\lvert g\mathcal{T}=\mathcal{T}\}=\Stab_G(\mathcal{T})
	\end{split}
	\end{equation*}
	is exactly the stabilizer of $\mathcal{T}$ in $G_{(i)}^+(Y_0,Y_1)$. In particular, $\Aut_G(C_\pi)$ can be identified with the automorphism group of $\mathcal{T}$ obtained by restricting the action of $\Stab_G(\mathcal{T})$ to $\mathcal{T}$. Moreover, since $G$ satisfies the hypothesis \ref{Hypothese H0} notice that 
	\begin{equation*}
	\tilde{\mathfrak{H}}_{\mathcal{S}_0}(U)=\{\Fix_G(\mathcal{R})\lvert \mathcal{R}\in \mathcal{T}_0, \qq \mathcal{R}\subsetneq \mathcal{T} \mbox{ and }\mathcal{R}\mbox{ is maximal for this property}\}
	\end{equation*}
	and
	$$\mathfrak{H}_{\mathcal{S}_0}(C_\pi)=\{ p_U(W)\lvert W\in \tilde{\mathfrak{H}}_{\mathcal{S}_0}(U)\}$$ 
	is the set of fixators (in $\Aut_G(C_\pi)$) of subtrees $\mathcal{R}\in \mathcal{T}_0$ satisfying $\mathcal{R}\subsetneq \mathcal{T}$ and which are maximal for this property. In particular, the $\mathcal{S}_0$-\tg{standard} representations of $\Aut_G(C_\pi)$ are the irreducible representations of the group of automorphisms of $\mathcal{T}$ obtained by restricting the action of $\Stab_G(\mathcal{T})$ and which do not admit any non-zero invariant vector for the fixator of any subtree $\mathcal{R}$ of $\mathcal{T}$ which belongs to $\mathfrak{T}_0$ and is maximal for this property. The above discussion together with \cite[Theorem A]{Semal2021O}  leads to following description of the cuspidal representations of $G_{(i)}^+(Y_0,Y_1)$.
	
	\begin{theorem}\label{la classification des reresentations cuspidale}
		Let $T$ be a $(d_0,d_1)$-semi-regular tree with $d_0,d_1\geq 4$, let $i$ be a legal coloring of $T$, let $Y_0,Y_1\subseteq \N$ be two finite subsets, let $G=G_{(i)}^+(Y_0,Y_1)$, consider the generic filtration  $\mathcal{S}_0$ of $G$ (defined in Section \ref{chapter generic filtration for Gi+YY}) and let us use the above notations. Then, the cuspidal representations of $G$ are exactly the irreducible representations at depth $l\geq 1$ with respect to $\mathcal{S}_{0}$. Furthermore, if $\pi$ is a cuspidal representation at depth $l$ we have that:
		\begin{itemize}
			\item $\pi$ has no non-zero $\Fix_G(\mathcal{R})$-invariant vector for any $\mathcal{R}\in \bigsqcup_{r<l}\mathfrak{T}_0\lb r\rb$.
			\item There exists a unique conjugacy class $C_\pi\in  \mathcal{F}_{\mathcal{S}_0}$ at height $l$ such that $\pi$ admits a non-zero $U$-invariant vector for any (hence for all) $U\in C_\pi$. Equivalently, there exists a unique $G$-orbit $\mathcal{O}$ of $\mathfrak{T}_0\lb l \rb$  such that $\pi$ admits a non-zero $\Fix_G(\mathcal{T})$-invariant vector for any (hence for all) $\mathcal{T}\in \mathcal{O}$. Furthermore, $\mathcal{O}$ is the only orbit of $\mathfrak{T}_0$ under the action of $G$ such that $C_\pi=\{\Fix_G(\mathcal{T})\lvert \mathcal{T}\in \mathcal{O}\}$.
			\item If $\mathcal{O}$ is the unique $G$-orbit of $\mathfrak{T}_0\lb l \rb$ corresponding to $\pi$ and if $\mathcal{T}\in \mathcal{O}$, $\pi$ admits a non-zero diagonal matrix coefficient supported in $\Stab_G(\mathcal{T})$. In particular, $\pi$ is square-integrable its equivalence class is isolated in the unitary dual $\widehat{G}$ for the Fell topology.	
		\end{itemize}
		Furthermore for every $C\in \mathcal{F}_{\mathcal{S}_0}$ at height $l\geq 1$ with corresponding $G$-orbit $\mathcal{O}$ in $\mathfrak{T}_0\lb l \rb$ that is $C=\{\Fix_G(\mathcal{T})\lvert \mathcal{T}\in \mathcal{O}\}$, there exists a bijective correspondence between the equivalence classes of irreducible representations of $G$ with seed $C$ and the equivalence classes of $\mathcal{S}_0$-standard representations of $\Aut_G(C)$. More precisely, for every $\mathcal{T}\in \mathcal{O}$ the following holds:
		\begin{enumerate}
			\item If $\pi$ is a cuspidal representation of $G$ with seed $C$, $(\omega_\pi, \Hr{\pi}^{\Fix_G(\mathcal{T})})$ is an $\mathcal{S}_0$-standard representation of $\Aut_{G}(C)$ such that
			\begin{equation*}
			\pi \cong T(\Fix_G(\mathcal{T}),\omega_\pi)=\Ind_{\Stab_G(\mathcal{T})}^{G}(\omega_\pi \circ p_{\Fix_G(\mathcal{T})}).
			\end{equation*}
			\item If $\omega$ is an $\mathcal{S}_0$-standard representation of $\Aut_{G}(C)$, the representation $T(\Fix_G(\mathcal{T}),\omega)$ is a cuspidal of $G$ with seed in $C$. 
		\end{enumerate}
		Furthermore, if $\omega_1$ and $\omega_2$ are $\mathcal{S}_0$-standard representations of $\Aut_{G}(C)$, we have that $T(\Fix_G(\mathcal{T}),\omega_1)\cong T(\Fix_G(\mathcal{T}),\omega_2)$ if and only if $\omega_1\cong\omega_2$. In particular, the above two constructions are inverse of one an other.
	\end{theorem}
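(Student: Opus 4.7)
The plan is to apply \cite[Theorem A]{Semal2021O} to the generic filtration $\mathcal{S}_0$. Theorem \ref{Theorem S 0 est fertile } guarantees that $\mathcal{S}_0$ factorizes$^+$ at every depth $l \geq 1$, and Lemma \ref{Lemme la forme des elements de Sl} together with Lemma \ref{lemma G satisfies hypotheses H0} ensures that $\mathcal{S}_0$ is indeed a generic filtration. So the machinery of \cite{Semal2021O} applies in the stated range of depths.

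The first step is to identify depth $0$. By Lemma \ref{Lemme la forme des elements de Sl}, $\mathcal{S}_0\lb 0 \rb = \{\Fix_G(B_T(e,0))\mid e \in E(T)\} = \{\Fix_G(e)\mid e \in E(T)\}$. An irreducible representation $\pi$ of $G$ has depth $0$ if and only if it admits a non-zero $\Fix_G(e)$-invariant vector for some edge $e \in E(T)$, which by the definitions recalled in the introduction is precisely the case when $\pi$ is spherical or special. Hence, cuspidal representations are exactly the irreducible representations of $G$ at depth $l \geq 1$ with respect to $\mathcal{S}_0$. For such a cuspidal $\pi$ at depth $l$, the absence of non-zero $\Fix_G(\mathcal{R})$-invariant vectors for $\mathcal{R}\in \bigsqcup_{r<l} \mathfrak{T}_0\lb r\rb$ follows directly from the definition of depth, and the existence of a unique seed $C_\pi \in \mathcal{F}_{\mathcal{S}_0}$ at height $l$ is guaranteed by \cite[Theorem A]{Semal2021O}. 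The identification of this seed with the $G$-orbit $\mathcal{O}\subseteq \mathfrak{T}_0\lb l \rb$ and of $\Aut_G(C_\pi)$ with the automorphism group of $\mathcal{T} \in \mathcal{O}$ obtained by restricting $\Stab_G(\mathcal{T})$ was already carried out in the paragraph preceding the statement, using the hypothesis $H_0$ verified in Lemma \ref{lemma G satisfies hypotheses H0} to replace the normalizer $N_G(\Fix_G(\mathcal{T}))$ by $\Stab_G(\mathcal{T})$.

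The second step is the bijective correspondence between cuspidal representations with seed $C$ and $\mathcal{S}_0$-standard representations of $\Aut_G(C)$, which is exactly the content of \cite[Theorem A]{Semal2021O} applied to our $\mathcal{S}_0$ at depth $l \geq 1$. The induction formula $\pi \cong \Ind_{\Stab_G(\mathcal{T})}^{G}(\omega_\pi \circ p_{\Fix_G(\mathcal{T})})$ and the converse construction $\omega \mapsto T(\Fix_G(\mathcal{T}),\omega)$ are the explicit maps provided there, and their mutual inverse property is also part of \cite[Theorem A]{Semal2021O}. The identification of $\tilde{\mathfrak{H}}_{\mathcal{S}_0}(U)$ with the maximal proper subtrees $\mathcal{R} \subsetneq \mathcal{T}$ in $\mathfrak{T}_0$ again uses only the hypothesis $H_0$: the correspondence $\mathcal{T}\mapsto \Fix_G(\mathcal{T})$ is inclusion-reversing and injective, so the covering relation in $\mathcal{F}_{\mathcal{S}_0}$ translates into maximality among proper subtrees.

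It then remains to derive the additional analytic properties. Since $\mathcal{T}$ is a finite subtree of $T$ and $G\leq \Aut(T)$ is closed, $\Stab_G(\mathcal{T})$ is a compact open subgroup of $G$. Consequently the induced representation $T(\Fix_G(\mathcal{T}),\omega_\pi)$ is induced from a finite-dimensional representation of a compact open subgroup, so it admits a non-zero diagonal matrix coefficient supported in $\Stab_G(\mathcal{T})$ (a standard fact about induction from compact open subgroups, also stated in \cite[Theorem A]{Semal2021O}). In particular $\pi$ is square-integrable. The isolation of $\pi$ in the unitary dual $\widehat{G}$ for the Fell topology will follow from the combination of square-integrability with the fact that Radu groups are type I, as recalled in the introduction; alternatively it is part of the conclusion of \cite[Theorem A]{Semal2021O}. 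The main conceptual obstacle—producing the factorization of $\mathcal{S}_0$ at every depth $l \geq 1$—has already been dispatched in Theorem \ref{Theorem S 0 est fertile }, so the present proof is essentially a translation of the general framework of \cite{Semal2021O} into the concrete geometric language of subtrees of $T$.
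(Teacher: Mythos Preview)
Your proposal is correct and matches the paper's own approach: the theorem is stated as the direct consequence of the discussion preceding it together with \cite[Theorem A]{Semal2021O}, once Theorem \ref{Theorem S 0 est fertile } supplies the factorization$^+$ at every depth $l\geq 1$ and the hypothesis \ref{Hypothese H0} has been used to translate seeds, normalizers and the sets $\tilde{\mathfrak{H}}_{\mathcal{S}_0}(U)$ into the language of subtrees. Your identification of depth $0$ with edge fixators, of $N_G(\Fix_G(\mathcal{T}))$ with $\Stab_G(\mathcal{T})$, and your derivation of square-integrability and isolation from induction off a compact open subgroup are exactly the steps the paper carries out in the paragraphs before the statement.
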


	\subsection{Existence of cuspidal representations}\label{section existence des cuspidale}
	
	Just as in the above sections, let $T$ be a $(d_0,d_1)$-semi-regular tree with $d_0,d_1\geq 4$, let $V(T)=V_0\sqcup V_1$ be the associated bipartition and let 
	$$\mathfrak{T}_{0}=\{B_T(v,r)\lvert v\in V(T), r\geq 1\}\sqcup\{B_T(e,r)\lvert e\in E(T), r\geq 0\}.$$
	Let $i$ be a legal coloring of $T$, let $Y_0,Y_1\subseteq \N$ be two finite subsets and consider the group $G_{(i)}^+(Y_0,Y_1)$. For shortening of the formulation and when it leads to no confusion we let $G=G_{(i)}^+(Y_0,Y_1)$. We have shown in Sections \ref{chapter generic filtration for Gi+YY} and \ref{Section factorization} that
	\begin{equation*}
	\mathcal{S}_{0}=\{\Fix_G(\mathcal{T})\lvert \mathcal{T}\in \mathfrak{T}_{0}\}.
	\end{equation*}
	is a generic filtration of $G_{(i)}^+(Y_0,Y_1)$ that factorizes$^+$ at all depth $l\geq 1$. In particular, \cite[Theorem A]{Semal2021O} provided a bijective correspondence between the irreducible representations of $G_{(i)}^+(Y_0,Y_1)$ at depth $l$ with seed $C \in \mathcal{F}_{\mathcal{S}_0}$ and the $\mathcal{S}_0$-standard representations of $\Aut_G(C)$. This lead to a description of the cuspidal representations of $G_{(i)}^+(Y_0,Y_1)$ see Theorem \ref{la classification des reresentations cuspidale}. On the other hand, none of those results yet ensures the existence of a cuspidal representation of $G_{(i)}^+(Y_0,Y_1)$. The purpose of this section is to prove the existence of a cuspidal representation with seed $C$ for each conjugacy class $C\in \mathcal{F}_{\mathcal{S}_0}$ at height $l\geq 1$. From the description of cuspidal representations of $G^+_{(i)}(Y_0,Y_1)$ provided by Theorem \ref{la classification des reresentations cuspidale}, it is equivalent to prove the following theorem.
	\begin{theorem}
		Let $G=G^+_{(i)}(Y_0,Y_1)$ and let $C\in \mathcal{F}_{\mathcal{S}_0}$ be a conjugacy class at height $l\geq 1$. Then, there exists a ${\mathcal{S}_0}$-standard representations of $\Aut_G(C)$. 
	\end{theorem}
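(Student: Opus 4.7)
The plan is to construct the desired $\mathcal{S}_0$-standard representation explicitly via a tensor-product construction on a natural normal subgroup of $\Aut_G(C)$, then to lift it to $\Aut_G(C)$ using Clifford theory. First, I would pick a subtree $\mathcal{T}$ in the $G$-orbit $\mathcal{O}\subseteq \mathfrak{T}_0\lb l\rb$ associated with $C$, and identify $F := \Aut_G(C)$ with the finite group of automorphisms of $\mathcal{T}$ induced by $\Stab_G(\mathcal{T})$. From the analysis preceding Theorem~\ref{la classification des reresentations cuspidale}, the subgroups $K_1,\dots,K_m \in \mathfrak{H}_{\mathcal{S}_0}(C)$ are the pointwise fixators in $F$ of the maximal proper subtrees $\mathcal{R}_j \in \mathfrak{T}_0$ of $\mathcal{T}$. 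In both the vertex-centre and edge-centre cases each such $\mathcal{R}_j$ contains every interior vertex of $\mathcal{T}$, so each $K_j$ lies inside the normal subgroup $L\trianglelefteq F$ pointwise fixing the interior of $\mathcal{T}$; moreover $F$ permutes $\{K_1,\dots,K_m\}$ transitively by conjugation via its action on the ``directions'' emanating from the centre of $\mathcal{T}$.

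Next, I would analyse the structure of $L$. Since $L$ fixes the interior pointwise, it only permutes the leaves of $\mathcal{T}$, which partition into $m$ direction classes indexed by the neighbours of the centre vertex (resp. by the endpoints of the centre edge). Correspondingly $L$ embeds in a product $\prod_k L_k$ where each $L_k$ is a local leaf-permutation group containing an alternating factor of degree at least $\min(d_0,d_1)-1\geq 3$. The sign conditions from Definition~\ref{definition de GiXY} cut $L$ out of $\prod_k L_k$ as an at-most-finite-index subgroup, and the explicit perturbation arguments of Lemma~\ref{Lemme Omega 0 est non vide} and Lemma~\ref{Lemme Omega n est non vide} show that each $K_j$ acts trivially on the factor $L_j$ while surjecting modulo finite index onto $\prod_{k\neq j}L_k$.

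Given this product structure, I would choose for each $k$ a non-trivial irreducible representation $\rho_k$ of $L_k$ (existence is guaranteed by the alternating factor together with $d_0,d_1\geq 4$), set $\rho := \bigotimes_k \rho_k$, and select an irreducible constituent $\rho'$ of $\rho|_L$. By the surjectivity statement above, $(\rho')^{K_j}$ injects into $\rho_j\otimes \bigotimes_{k\neq j}(\rho_k)^{L_k}=0$, so $\rho'$ has no non-zero $K_j$-invariant vector for any $j$. I would then let $\omega$ be any irreducible constituent of $\Ind_L^F \rho'$; by Clifford theory $\omega|_L$ decomposes as a sum of $F$-conjugates $f\cdot \rho'$, and since $F$ permutes the $K_j$'s one has $f^{-1}K_jf=K_{j'}$ for some $j'$, so each summand satisfies $(f\cdot \rho')^{K_j}=(\rho')^{f^{-1}K_j f}=(\rho')^{K_{j'}}=0$. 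Hence $\omega^{K_j}=0$ for every $j$, and $\omega$ is the desired $\mathcal{S}_0$-standard representation of $\Aut_G(C)$. The main technical obstacle is the surjectivity assertion in the middle step: one must verify that the global sign conditions defining $G^+_{(i)}(Y_0,Y_1)$ do not prevent $K_j$ from reaching a sufficiently large part of $\prod_{k\neq j}L_k$, which is precisely where the local-modification techniques of Section~\ref{Section factorization} need to be reused.
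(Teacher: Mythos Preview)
Your approach is genuinely different from the paper's, and more constructive, but it has a real gap and an acknowledged one.

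The real gap is at depth $l=1$. There $\mathcal{T}=B_T(v,1)$, the interior is $\{v\}$, and the leaves are precisely the neighbours $w_1,\dots,w_d$ of $v$. Your direction classes are then singletons, so each $L_k$ is trivial, contradicting your claim that $L_k$ contains an alternating factor of degree $\geq \min(d_0,d_1)-1\geq 3$. The tensor-product construction collapses and produces nothing. The paper handles this case separately (Lemma~\ref{existence of standard for vertices}) by observing that $\Aut_G(C)\cong \underline{G}(v)$ is $2$-transitive on $E(B_T(v,1))$ and invoking \cite[Lemma 2.28]{Semal2021O}; you would need a comparable ad hoc argument.

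The acknowledged gap---the surjectivity of $K_j$ onto $\prod_{k\neq j}L_k$---is more delicate than you suggest. Even ``surjectivity modulo finite index'' is not enough: to conclude $(\rho')^{K_j}=0$ you need that the projection of $K_j$ to each $L_k$ with $k\neq j$ has no invariant vector in $\rho_k$, and the sign constraints defining $G^+_{(i)}(Y_0,Y_1)$ can in principle force that projection to land in a proper subgroup. Also, your claim that $F$ permutes the $K_j$'s \emph{transitively} is false for even $l$: since $G\leq \Aut(T)^+$ is type-preserving, $\Stab_G(B_T(e,r))$ fixes both endpoints of $e$, so the action on $\{K_0,K_1\}$ is trivial. (Your Clifford step only uses that $F$ permutes the $K_j$'s, not transitivity, so this is a misstatement rather than a fatal error.)

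By contrast, the paper avoids all of this structural analysis. It invokes the existence criterion \cite[Proposition 2.29]{Semal2021O} (Proposition~\ref{existence criterion} here) and its variants: one only needs a family of subtrees $\mathcal{T}_1,\dots,\mathcal{T}_s\subsetneq \mathcal{T}$ with $\mathcal{T}_i\cup\mathcal{T}_j=\mathcal{T}$ for $i\neq j$, permuted by $\Stab_G(\mathcal{T})$, and with strict inclusions $\Fix_G(\mathcal{T})\subsetneq\Fix_G(\mathcal{T}_i)\subsetneq\Stab_G(\mathcal{T})$. The strictness follows immediately from hypothesis~\ref{Hypothese H0} (Lemma~\ref{lemma G satisfies hypotheses H0}) and the inclusion $G^+_{(i)}(\{0\},\{0\})\leq G$; no detailed analysis of the sign conditions is required. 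Your approach would, if completed, yield a more explicit standard representation, but the paper's route is substantially shorter and sidesteps both of the difficulties above.
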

	The proof of this theorem is gathered in the few results below. We start by recalling a result from \cite{Semal2021O}. 
	\begin{proposition}[{\cite[Proposition $2.29$]{Semal2021O}}]\label{existence criterion}
		Let $T$ be a locally finite tree, let $G\leq \Aut(T)$ be a closed subgroup, let $\mathcal{T}$ be a finite subtree of $T$ and let $\{\mathcal{T}_1,\mathcal{T}_2,...,\mathcal{T}_s\}$ be a set of distinct finite subtrees of $T$ contained in $\mathcal{T}$ such that $\mathcal{T}_i\cup\mathcal{T}_j=\mathcal{T}$ for every $i\not=j$. Suppose that $\Stab_G(\mathcal{T})$ acts by permutation on the set $\{\mathcal{T}_1,\mathcal{T}_2,...,\mathcal{T}_s\}$ and that $\Fix_G(\mathcal{T})\subsetneq \Fix_G(\mathcal{T}_i)\subsetneq \Stab_G(\mathcal{T})$. Then, there exists an irreducible representation of $\Stab_G(\mathcal{T})/\Fix_G(\mathcal{T})$ without non-zero $\Fix_G(\mathcal{T}_i)/\Fix_G(\mathcal{T})$-invariant vector for every $i=1,...,s$.
	\end{proposition}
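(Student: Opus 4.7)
The plan is to exploit the hypothesis $\mathcal{T}_i\cup \mathcal{T}_j=\mathcal{T}$ to obtain a direct-product decomposition inside $Q:=\Stab_G(\mathcal{T})/\Fix_G(\mathcal{T})$, and then to construct the desired irreducible as a subrepresentation of a suitable induced representation. Let $H_i=\Fix_G(\mathcal{T}_i)/\Fix_G(\mathcal{T})$; the hypotheses state that each $H_i$ is a nontrivial proper subgroup of the finite group $Q$. The identity $\Fix_G(\mathcal{T}_i)\cap \Fix_G(\mathcal{T}_j)=\Fix_G(\mathcal{T}_i\cup \mathcal{T}_j)=\Fix_G(\mathcal{T})$ yields at once that $H_i\cap H_j=\{1\}$ whenever $i\neq j$.

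The first key step is to show $[H_i,H_j]=1$ for $i\neq j$. Since $Q$ acts faithfully on the finite tree $\mathcal{T}$, it suffices to check this vertex by vertex. Pick $v\in \mathcal{T}$, $h\in H_i$ and $h'\in H_j$. If $v\in \mathcal{T}_i\cap \mathcal{T}_j$, both $h$ and $h'$ fix $v$. If $v\in \mathcal{T}_i\setminus \mathcal{T}_j$, then $h(v)=v$ and, crucially, $h'(v)\in \mathcal{T}\setminus \mathcal{T}_j\subseteq \mathcal{T}_i$ because $\mathcal{T}_i\cup \mathcal{T}_j=\mathcal{T}$, so $h$ still fixes $h'(v)$ and $hh'(v)=h'(v)=h'h(v)$. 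The case $v\in \mathcal{T}_j\setminus \mathcal{T}_i$ is symmetric, and the remaining case is vacuous. Combined with the trivial-intersection condition, this shows that $H:=H_1H_2\cdots H_s$ is an internal direct product $H_1\times \cdots\times H_s$. The permutation hypothesis on $\Stab_G(\mathcal{T})$ says that $Q$ permutes $\{H_1,\dots,H_s\}$ by conjugation, so $H$ is normal in $Q$.

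Next, for each $i$ pick a nontrivial irreducible representation $\chi_i$ of $H_i$, which exists because $H_i\neq \{1\}$. Form the outer tensor product $\eta=\chi_1\boxtimes \cdots\boxtimes \chi_s$, an irreducible representation of the direct product $H$. Since $\chi_i$ is nontrivial and irreducible, its space of $H_i$-fixed vectors is zero, and hence $\eta^{H_i}=0$ for every $i$. Set $\pi=\Ind_H^Q\eta$. Because $H\trianglelefteq Q$, Mackey's formula gives $\Res_H\pi=\bigoplus_{xH\in Q/H}{}^x\eta$, with ${}^x\eta(h)=\eta(x^{-1}hx)$. Writing $h=h_1\cdots h_s$ and using that $x$ permutes the $H_i$'s as $x^{-1}H_ix=H_{\sigma(i)}$ for some permutation $\sigma$, the restriction of ${}^x\eta$ to $H_i$ is the pullback of $\chi_{\sigma(i)}$ along the group isomorphism $h_i\mapsto x^{-1}h_ix:H_i\to H_{\sigma(i)}$, which remains nontrivial. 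Thus $({}^x\eta)^{H_i}=0$ for each $i$ and each coset representative $x$, so $\pi^{H_i}=0$. Any irreducible subrepresentation $\rho$ of the nonzero representation $\pi$ then satisfies $\rho^{H_i}\subseteq \pi^{H_i}=0$, delivering the desired irreducible.

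The genuinely delicate step is the commutation $[H_i,H_j]=1$: it is what rules out problematic abstract configurations, such as $Q\cong (\mathbb{Z}/2\mathbb{Z})^2$ with the three order-$2$ subgroups taken as the $H_i$'s (where no irreducible of the required type exists), and it forces the direct-product structure on $H$. This is precisely where the tree-theoretic constraint $\mathcal{T}\setminus \mathcal{T}_j\subseteq \mathcal{T}_i$ intervenes, ensuring that $h'(v)$ lands in the fixed set of $h$. Once the direct product and its normality are established, the remainder is standard Clifford/Mackey bookkeeping.
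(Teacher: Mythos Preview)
The paper does not prove this proposition; it is quoted from \cite[Proposition~2.29]{Semal2021O} and used as a black box, so there is no in-paper argument to compare against.

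Your overall strategy is correct and the tree-theoretic commutation step is fine, but there is a gap in the passage to the direct product. Pairwise commutation together with \emph{pairwise} trivial intersection does not force $H=H_1\cdots H_s$ to be an internal direct product: your own example $Q\cong(\mathbb{Z}/2\mathbb{Z})^2$ with the three order-two subgroups satisfies both conditions (the ambient group is abelian, and any two of the subgroups meet trivially), yet $H_1H_2H_3=(\mathbb{Z}/2\mathbb{Z})^2\not\cong H_1\times H_2\times H_3$. So the remark in your final paragraph that the commutation $[H_i,H_j]=1$ is what rules out this configuration is off target; what actually rules it out, and what you need for $\eta=\chi_1\boxtimes\cdots\boxtimes\chi_s$ to be well defined, is the stronger condition $H_i\cap\prod_{j\neq i}H_j=\{1\}$.

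Fortunately this stronger condition follows from the same tree input. If $g\in H_i$ can also be written as $\prod_{j\neq i}h_j$ with $h_j\in H_j$, then $g$ fixes $\mathcal{T}_i$; and for any $v\in\mathcal{T}\setminus\mathcal{T}_i$ the hypothesis $\mathcal{T}_i\cup\mathcal{T}_j=\mathcal{T}$ forces $v\in\mathcal{T}_j$ for every $j\neq i$, so each $h_j$ fixes $v$ and hence $g(v)=v$. Thus $g$ fixes all of $\mathcal{T}$ and is trivial in $Q$. With this patch in place, the remainder of your argument (normality of $H$ from the permutation hypothesis, the Mackey decomposition $\Res_H^Q\pi\cong\bigoplus_x{}^x\eta$, the observation that each ${}^x\eta$ restricts to a nontrivial irreducible on $H_i$, and the passage to an irreducible summand of $\pi$) is correct.
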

	The following proposition ensures the existence of ${\mathcal{S}_0}$-standard representation of $\Aut_{G}(C)$ for all $C\in \mathcal{F}_{\mathcal{S}_0}$ with even height $l\geq 1$.
	\begin{proposition}\label{Proposition existence of standard for edges }
		Let $G=G^+_{(i)}(Y_0,Y_1)$ and let $C\in \mathcal{F}_{\mathcal{S}_0}$ be a conjugacy class at even height $l\geq 1$. Then, there exists a ${\mathcal{S}_0}$-standard representation of $\Aut_{G}(C)$.
	\end{proposition}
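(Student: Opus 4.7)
The plan is to realize $C$ concretely, identify the maximal proper subtrees of its canonical tree in $\mathfrak{T}_0$, and then apply Proposition \ref{existence criterion} to produce the required $\mathcal{S}_0$-standard representation. Concretely, since $l\geq 2$ is even, Lemma \ref{Lemme la forme des elements de Sl} implies that $C=\{\Fix_G(B_T(f,r))\mid f\in E(T)\}$ where $r=l/2\geq 1$. Pick any edge $e=\{v,v'\}\in E(T)$ and set $\mathcal{T}=B_T(e,r)$, $\mathcal{T}_1=B_T(v,r)$, $\mathcal{T}_2=B_T(v',r)$. Then $\mathcal{T}_1,\mathcal{T}_2\in\mathfrak{T}_0$, both are strictly contained in $\mathcal{T}$, and $\mathcal{T}_1\cup\mathcal{T}_2=\mathcal{T}$ because $d(w,e)=\min(d(w,v),d(w,v'))$. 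In view of Theorem \ref{la classification des reresentations cuspidale}, constructing an irreducible representation of $\Aut_G(C)\cong\Stab_G(\mathcal{T})/\Fix_G(\mathcal{T})$ that has no non-zero invariant vector under $\Fix_G(\mathcal{T}_1)/\Fix_G(\mathcal{T})$ nor under $\Fix_G(\mathcal{T}_2)/\Fix_G(\mathcal{T})$ will suffice, provided these two subgroups indeed exhaust $\mathfrak{H}_{\mathcal{S}_0}(C)$.

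The next step is therefore to check that $\mathcal{T}_1$ and $\mathcal{T}_2$ are the only $\mathcal{S}_0$-maximal proper subtrees of $\mathcal{T}$ in $\mathfrak{T}_0$. Any $\mathcal{R}=B_T(w,s)$ or $\mathcal{R}=B_T(f,s)$ contained in $B_T(e,r)$ satisfies $d(\cdot,\mathcal{R}\text{-centre})+d(\mathcal{R}\text{-centre},e)\leq r$, and a direct case analysis shows that enlarging $\mathcal{R}$ while keeping it inside $\mathcal{T}$ forces $\mathcal{R}\in\{B_T(v,r),B_T(v',r),B_T(e,r)\}$. Hence $\mathfrak{H}_{\mathcal{S}_0}(C)$ consists exactly of the images $p_{\Fix_G(\mathcal{T})}(\Fix_G(\mathcal{T}_1))$ and $p_{\Fix_G(\mathcal{T})}(\Fix_G(\mathcal{T}_2))$.

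It then remains to verify the hypotheses of Proposition \ref{existence criterion}. Since $G\leq\Aut(T)^+$ is type-preserving and $v,v'$ have opposite types, every element of $\Stab_G(\mathcal{T})$ must fix each of $v,v'$ setwise; consequently $\Stab_G(\mathcal{T})$ preserves the partition $\{\mathcal{T}_1,\mathcal{T}_2\}$ (in fact trivially). The inclusion $\Fix_G(\mathcal{T})\subsetneq\Fix_G(\mathcal{T}_i)$ is exactly Lemma \ref{lemma G satisfies hypotheses H0} (hypothesis \ref{Hypothese H0}). For $\Fix_G(\mathcal{T}_i)\subsetneq\Stab_G(\mathcal{T})$ one notes that the inclusion $\Fix_G(\mathcal{T}_i)\subseteq\Stab_G(\mathcal{T})$ is clear because an element fixing $\mathcal{T}_i$ fixes $v,v'$ and hence preserves each sphere of each radius around them, while strictness follows from the fact that $G_{(i)}^+(Y_0,Y_1)$ has local action at $v$ containing $\Alt(d_t)$, so one can construct an element of $G$ supported on a union of half-trees hanging off $v'$ that stabilises $\mathcal{T}$ but alters the $v$-neighbourhood.

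Applying Proposition \ref{existence criterion} to $\mathcal{T},\mathcal{T}_1,\mathcal{T}_2$ produces an irreducible representation $\omega$ of $\Stab_G(\mathcal{T})/\Fix_G(\mathcal{T})=\Aut_G(C)$ with no non-zero $\Fix_G(\mathcal{T}_i)/\Fix_G(\mathcal{T})$-invariant vector for $i=1,2$; by the above identification of $\mathfrak{H}_{\mathcal{S}_0}(C)$ this is the desired $\mathcal{S}_0$-standard representation. The main technical step, and where a careful check is needed, is the classification of the maximal proper elements of $\mathfrak{T}_0$ sitting below $\mathcal{T}$, since a priori one could worry about exotic subtrees $B_T(e',s)$ with $e'\neq e$; the argument however reduces to an elementary distance inequality in the tree.
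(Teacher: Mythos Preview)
Your proof is correct and follows essentially the same route as the paper: identify $\mathcal{T}=B_T(e,r)$ with its two maximal proper subtrees $B_T(v,r)$ and $B_T(v',r)$, verify the hypotheses of Proposition~\ref{existence criterion}, and apply it. One small remark: your justification of the strict inclusion $\Fix_G(\mathcal{T}_i)\subsetneq\Stab_G(\mathcal{T})$ via an element ``supported on half-trees hanging off $v'$ that alters the $v$-neighbourhood'' is slightly garbled (those two conditions are in tension when $r=1$); the paper simply appeals to hypothesis~\ref{Hypothese H0}, which gives $\Fix_G(\mathcal{T}_{3-i})\subseteq\Stab_G(\mathcal{T})$ but $\Fix_G(\mathcal{T}_{3-i})\not\subseteq\Fix_G(\mathcal{T}_i)$, and this cleaner argument is available to you as well.
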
 
	\begin{proof}
		Since $l$ is even, Lemma \ref{Lemme la forme des elements de Sl} ensures the existence of an edge $e\in E(T)$ and an integer $r\geq 1$ such that $B_T(e,r)\in \mathfrak{T}_0$ and $C=\mathcal{C}(\Fix_{G}(B_T(e,r))$. For shortening of the formulation, we let $\mathcal{T}=B_T(e,r)$. As observed in Section \ref{section cuspidal rep description} we have that $\Aut_G(C)\cong \Stab_G(\mathcal{T})/\Fix_G(\mathcal{T})$ and
		\begin{equation*}
		\begin{split}
		\mathfrak{H}_{\mathcal{S}_0}(\Fix_G(\mathcal{T}))&=\big\{\Fix_G(\mathcal{R})/\Fix_G(\mathcal{T})\lvert \mathcal{R}\in \mathfrak{T}_0, \qq \mathcal{R}\subsetneq \mathcal{T} \\
		&\mbox{ }\mbox{ }\mbox{ }\mbox{ }\mbox{ }\mbox{ }\mbox{ }\mbox{ }\mbox{ }\mbox{ }\mbox{ }\mbox{ }\mbox{ }\mbox{ }\mbox{ }\mbox{ }\mbox{ }\mbox{ }\mbox{ }\mbox{ }\mbox{ }\mbox{ }\mbox{ }\mbox{ }\mbox{ }\mbox{ }\mbox{ }\mbox{ }\mbox{ }\mbox{ }\mbox{ }\mbox{ }\mbox{ }\mbox{ }\mbox{ and } \mathcal{R}\mbox{ is maximal for this property}\big\}\\
		&=\{\Fix_G(B_T(v,r))/\Fix_G(B_T(e,r))\lvert v\in e\}.
		\end{split}
		\end{equation*} 
		Let $v_0,v_1$ be the two vertices of $e$, set $\mathcal{T}_i=B_T(v_i,r)$ and notice that $\mathcal{T}_0\cup \mathcal{T}_1=\mathcal{T}$. Moreover, notice that $\Stab_G(\mathcal{T})=\{g\in G \lvert ge=e\}$ acts by permutations on the set $\{\mathcal{T}_0, \mathcal{T}_1\}$. Furthermore, since $G$ satisfies the hypothesis \ref{Hypothese H0} notice that $\Fix_G(\mathcal{T})\subsetneq \Fix_G(\mathcal{T}_i)\subsetneq \Stab_G(\mathcal{T})$. In particular, the hypotheses of Proposition \ref{existence criterion} are satisfied and the result follows.
	\end{proof}
	A similar reasoning leads to a proof of the existence of ${\mathcal{S}_0}$-standard representation of $\Aut_{G}(C)$ for all $C\in \mathcal{F}_{\mathcal{S}_0}$ with odd height $l> 1$.
	\begin{lemma}\label{existence of standard for odd depth}
		Let $G=G^+_{(i)}(Y_0,Y_1)$ and let $C\in \mathcal{F}_{\mathcal{S}_0}$ be a conjugacy class at odd height $l>1$. Then, there exists a ${\mathcal{S}_0}$-standard representation of $\Aut_{G}(C)$.
	\end{lemma}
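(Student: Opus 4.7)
Set $r:=(l+1)/2\geq 2$. By Lemma \ref{Lemme la forme des elements de Sl} there exists $v\in V(T)$ such that $C=\mathcal{C}(\Fix_G(\mathcal{T}))$ for $\mathcal{T}:=B_T(v,r)$. Since every automorphism fixing $v$ preserves $B_T(v,r)$ setwise, $\Aut_G(C)\cong \Fix_G(v)/\Fix_G(\mathcal{T})$, and as observed prior to Theorem \ref{la classification des reresentations cuspidale}, $\mathfrak{H}_{\mathcal{S}_0}(C)$ consists of the subgroups $H_e:=\Fix_G(B_T(e,r-1))/\Fix_G(\mathcal{T})$ indexed by the edges $e\ni v$. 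The strategy of Proposition \ref{Proposition existence of standard for edges} cannot be mimicked here because, when $d_v\geq 4$, the maximal proper subtrees $B_T(e,r-1)\in\mathfrak{T}_0$ of $\mathcal{T}$ fail the covering hypothesis of Proposition \ref{existence criterion}: for distinct edges $e,f\ni v$, their pairwise union misses every sphere-$r$ vertex of $\mathcal{T}$ lying in a branch of $v$ other than those of $e$ and $f$. My plan is therefore to build the standard representation by hand, starting from the normal subgroup $N:=\Fix_G(B_T(v,r-1))/\Fix_G(\mathcal{T})$ of $\Aut_G(C)$ (which contains every $H_e$) and invoking Clifford theory.

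All vertices $y\in S:=S(v,r-1)$ share a common type $t_*\in\{0,1\}$ and degree $d_*:=d_{t_*}$. Since each element of $N$ fixes $y$ and its parent, the local-action map $g\mapsto \sigma_{(i)}(g,y)$ takes values in the stabilizer in $F_{t_*}$ of the $v$-direction slot, a subgroup containing $\Alt(d_*-1)$. Assembling these maps yields an injection $N\hookrightarrow \Sym(d_*-1)^{S}$, and I claim its image contains $N_0:=\Alt(d_*-1)^{S}$. To realize an arbitrary $(\tau_y)_{y\in S}\in N_0$, I plan to construct for each $y\in S$ an automorphism $g_y\in\Aut(T)^+$ that is the identity outside the subtree hanging off $y$, has local action $\tau_y$ at $y$, and has even local actions at every deeper vertex; this last choice is possible because at each vertex $z$ inside the subtree, the local action of $g_y$ may be prescribed independently within $F_{t_z}\supseteq\Alt(d_z)$. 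Taking the product $g:=\prod_{y\in S}g_y$ yields an automorphism fixing $B_T(v,r-1)$ pointwise with only even local actions, which therefore trivially satisfies every sign condition defining $G^+_{(i)}(Y_0,Y_1)$. Under this identification, $H_e\cap N_0=\Alt(d_*-1)^{S\setminus X_e}$, where $X_e\subseteq S$ denotes the sphere vertices lying in the $e$-branch of $v$.

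Fix a non-trivial irreducible representation $\rho$ of $\Sym(d_*-1)$ distinct from the sign character -- the standard representation of dimension $d_*-2$ suffices, since $d_*-1\geq 3$, and it satisfies $\rho^{\Alt(d_*-1)}=0$. The external tensor product $\sigma:=\bigotimes_{y\in S}\rho$ is an irreducible representation of $\Sym(d_*-1)^{S}$ satisfying
\[
\sigma^{H_e\cap N_0}=\bigotimes_{y\in X_e}\rho\otimes\bigotimes_{y\in S\setminus X_e}\rho^{\Alt(d_*-1)}=0
\]
for every edge $e\ni v$; \emph{a fortiori} $\sigma^{H_e}=0$ since $H_e\cap N_0\subseteq H_e$. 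Moreover $\sigma|_N$ is $\Aut_G(C)$-invariant as an isomorphism class, because the quotient $\Aut_G(C)/N$ acts on $S$ by permutation and $\sigma$, consisting of the same factor $\rho$ at every coordinate, is insensitive to such permutations. Picking any irreducible summand $\tau$ of $\sigma|_N$ and any irreducible summand $\pi$ of $\Ind_N^{\Aut_G(C)}\tau$, Clifford theory gives that $\pi|_N$ is a direct sum of $\Aut_G(C)$-conjugates of $\tau$, each of which remains a summand of $\sigma|_N$ by invariance; thus $(\pi|_N)^{H_e}\subseteq\sigma^{H_e}=0$, so $\pi$ is $\mathcal{S}_0$-standard. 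The main technical point is the construction of elements realizing $N_0$ in the second paragraph; I expect this to be routine, since it is a direct adaptation of the sign-compensation arguments already developed in Section \ref{Section factorization}.
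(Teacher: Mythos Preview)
Your argument is correct, but it follows a genuinely different route from the paper's. The paper does not build the representation by hand: instead it introduces auxiliary subtrees $\mathcal{T}_j=(B_T(v,1)\setminus\{w_j\})^{(r-1)}$ (one for each neighbour $w_j$ of $v$), observes that these \emph{do} satisfy $\mathcal{T}_j\cup\mathcal{T}_k=\mathcal{T}$ for $j\neq k$ and are permuted by $\Stab_G(\mathcal{T})=\Fix_G(v)$, and applies the existence criterion from \cite{Semal2021O} to obtain an irreducible representation of $\Aut_G(C)$ with no $\Fix_G(\mathcal{T}_j)/\Fix_G(\mathcal{T})$-invariants. Since every $B_T(e,r-1)$ is contained in some $\mathcal{T}_j$, the $H_e$-invariants vanish as well. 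So the obstruction you identify (that the maximal subtrees in $\mathfrak{T}_0$ fail the covering hypothesis) is real, but the paper circumvents it by passing to subtrees outside $\mathfrak{T}_0$ rather than by Clifford theory. The paper's approach is shorter and stays parallel to the even case; yours is more self-contained and produces an explicit representation.

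One small point to tighten in your write-up: your justification that $\sigma|_N$ is $\Aut_G(C)$-invariant is incomplete as stated. Conjugation by $g\in\Fix_G(v)$ does permute the factors indexed by $S$, but it also conjugates each factor by an element of $\Sym(d_*)$ sending one point-stabiliser to another; after your identifications this becomes an \emph{inner} automorphism of $\Sym(d_*-1)$, so $\rho$ is preserved and the invariance holds---but this deserves a sentence. Alternatively, you can drop the invariance claim entirely: since $gH_eg^{-1}=H_{g(e)}$ and you have already shown $\tau^{H_f}\subseteq\sigma^{H_f}=0$ for \emph{every} edge $f\ni v$, each Clifford conjugate $\tau^{g}$ satisfies $(\tau^g)^{H_e}=\tau^{H_{g(e)}}=0$ directly, and $(\pi|_N)^{H_e}=0$ follows without appealing to invariance of $\sigma|_N$.
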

	\begin{proof}
		Since $l$ is even, Lemma \ref{Lemme la forme des elements de Sl} ensures the existence of a vertex $v\in V(T)$ and an integer $r\geq 1$ such that $B_T(v,r+1)\in \mathfrak{T}_0$ and $C=\mathcal{C}(\Fix_{G}(B_T(v,r+1))$. For shortening of the formulation, we let $\mathcal{T}=B_T(v,r+1)$ of $T$.  As observed in Section \ref{section cuspidal rep description} we have that $\Aut_G(C)\cong \Stab_G(\mathcal{T})/\Fix_G(\mathcal{T})$ and
		\begin{equation*}
		\begin{split}
		\mathfrak{H}_{\mathcal{S}_0}(\Fix_G(\mathcal{T}))&=\big\{\Fix_G(\mathcal{R})/\Fix_G(\mathcal{T})\lvert \mathcal{R}\in \mathfrak{T}_0, \qq \mathcal{R}\subsetneq \mathcal{T} \\
		&\mbox{ }\mbox{ }\mbox{ }\mbox{ }\mbox{ }\mbox{ }\mbox{ }\mbox{ }\mbox{ }\mbox{ }\mbox{ }\mbox{ }\mbox{ }\mbox{ }\mbox{ }\mbox{ }\mbox{ }\mbox{ }\mbox{ }\mbox{ }\mbox{ }\mbox{ }\mbox{ }\mbox{ }\mbox{ }\mbox{ }\mbox{ }\mbox{ }\mbox{ }\mbox{ }\mbox{ }\mbox{ }\mbox{ }\mbox{ }\mbox{ and } \mathcal{R}\mbox{ is maximal for this property}\big\}\\
		&=\{\Fix_G(B_T(e,r-1))/\Fix_G(B_T(v,r))\lvert e\in E(B_T(v,1))\}.
		\end{split}
		\end{equation*} 
		Let $\{w_1,...,w_d\}$ be the leaves of $B_T(v,1)$. For every $i=1,...,d$ let $\mathcal{T}_i= (B_T(v,1)\backslash\{w_i\})^{(r-1)}$ where $\mathcal{R}^{(t)}=\{v\in V(T)\lvert \exists w\in \mathcal{R} \mbox{ s.t. }d_T(v,w)\leq t\}$. Notice that $\mathcal{T}_i\cup \mathcal{T}_j=\mathcal{T}$. On the other hand, in our case $$\Stab_G(\mathcal{T})=\{g\in G \lvert gv=v\}=\Fix_G(v)$$ and $\Fix_G(v)$ acts by permutation on the set $\{\mathcal{T}_1,...,\mathcal{T}_d\}$. Finally, notice that $v\in \mathcal{T}_i$ and therefore that $\Fix_G(\mathcal{T}_i)\subseteq \Stab_G(\mathcal{T})$ for all $i=1,...,d$. Furthermore, since $G$ contains $G^{+}_{(i)}(\{0\},\{0\})$, notice that $\Fix_G(\mathcal{T})\subsetneq \Fix_G(\mathcal{T}_i)\subsetneq \Stab_G(\mathcal{T})$. In particular, \cite[Proposition 2.27]{Semal2021O} ensures the existence of an irreducible representation of $\Aut_{G}(C)$ without non-zero $\Fix_G(\mathcal{T}_i)/\Fix_G(\mathcal{T})$-invariant vectors. The result follows from the fact that for every edge $e\in E(B_T(v,1))$ there exists some $i\in \{1,...,d\}$ such that $B_T(e,r)\subseteq \mathcal{T}_i$ and hence $\Fix_G(B_T(e,r))/\Fix_G(\mathcal{T})\subseteq \Fix_G(\mathcal{T}_i)/\Fix_G(\mathcal{T})$.
	\end{proof}
	The next lemma treats the remaining case $l=1$ where Proposition \ref{existence criterion} does not apply.
	\begin{lemma}\label{existence of standard for vertices}
		Let $G=G^+_{(i)}(Y_0,Y_1)$ and let $C\in \mathcal{F}_{\mathcal{S}_0}$ be a conjugacy class at height $1$. Then, there exists a $\mathcal{S}_{0}$-standard representation of $\Aut_{G}(C)$.
	\end{lemma}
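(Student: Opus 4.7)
The plan is to identify $\Aut_G(C)$ and the forbidden family $\mathfrak{H}_{\mathcal{S}_0}(U)$ concretely, and then reduce the problem to a statement about the ordinary representation theory of $\Alt(d_v)$ and $\Sym(d_v)$. The special feature of height $1$ that forces a separate treatment is that the maximal proper subtrees in $\mathfrak{T}_0$ inside $B_T(v,1)$ are single edges, for which Proposition \ref{existence criterion} is not directly applicable: indeed, any two such edges $\{v,w\}, \{v,w'\}$ have union $\{v,w,w'\} \subsetneq \mathcal{T}$ as soon as $d_v \geq 3$.

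First, using Lemma \ref{Lemme la forme des elements de Sl}, I would pick a representative $U = \Fix_G(\mathcal{T})$ of $C$ with $\mathcal{T} = B_T(v,1)$ for some $v \in V(T)$. Since $v$ is the unique center of $\mathcal{T}$, one has $\Stab_G(\mathcal{T}) = \Fix_G(v)$, and the kernel of the natural projection $\Fix_G(v) \to \Sym(E(v))$ is exactly $\Fix_G(B_T(v,1))$, so $\Aut_G(C) \cong \underline{G}(v)$. Since $G$ contains $G^{+}_{(i)}(\{0\},\{0\})$, whose local action at every vertex is the alternating group, $\underline{G}(v)$ contains $\Alt(d_v)$ and is therefore equal to $\Alt(d_v)$ or $\Sym(d_v)$.

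Second, I would determine $\tilde{\mathfrak{H}}_{\mathcal{S}_0}(U)$ explicitly by a direct case analysis on $\mathfrak{T}_0$: no subtree of the form $B_T(v',r')$ with $r' \geq 1$ fits strictly inside $B_T(v,1)$, while any $B_T(e,r)$ with $r \geq 1$ and $v \in e$ already contains $B_T(v,1)$; hence the maximal elements of $\mathfrak{T}_0$ properly contained in $\mathcal{T}$ are precisely the edges $\{v,w\}$ for $w \in S(v,1)$. Combined with Lemma \ref{lemma G satisfies hypotheses H0}, a short computation then shows that $\mathfrak{H}_{\mathcal{S}_0}(U) = \{\Fix_G(\{v,w\})/\Fix_G(B_T(v,1)) \mid w \in S(v,1)\}$ is precisely the collection of point stabilizers in the natural action of $\underline{G}(v)$ on $E(v)$, a set of cardinality $d_v$.

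Third, I would produce the standard representation from classical facts about $\Sym(d)$ and $\Alt(d)$. Since $\underline{G}(v) \supseteq \Alt(d_v)$ with $d_v \geq 4$, its action on the $d_v$ points of $E(v)$ is $2$-transitive, so the permutation representation $\C^{d_v} \cong \Ind_H^{\underline{G}(v)} \mathbf{1}$, where $H$ is any point stabilizer, decomposes as the direct sum of the trivial representation and an irreducible standard representation of dimension $d_v-1$. By Frobenius reciprocity, any irreducible representation of $\underline{G}(v)$ other than these two has no nonzero $H$-fixed vector and is therefore $\mathcal{S}_0$-standard. Such a representation exists in every case: for $d_v \geq 5$, both $\Alt(d_v)$ and $\Sym(d_v)$ admit more than two isomorphism classes of irreducible representations, while for $d_v = 4$ one can take the sign character of $\Sym(4)$, respectively either of the two non-trivial linear characters of $\Alt(4)$, both of which restrict non-trivially to a point stabilizer $\Alt(3)$. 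The main (but minor) obstacle is thus the verification of the small case $d_v = 4$, which is settled by inspection.
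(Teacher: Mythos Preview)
Your argument is correct and follows the same overall structure as the paper's proof: both identify $\Aut_G(C)\cong\underline{G}(v)$ acting on $E(v)$, both identify $\mathfrak{H}_{\mathcal{S}_0}(C)$ with the family of point stabilizers, and both then produce an irreducible representation with no nonzero fixed vector for any point stabilizer.

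The only difference lies in the final step. The paper obtains $2$-transitivity of $\underline{G}(v)$ from \cite[Lemma~3.1.1]{BurgerMozes2000} and then invokes \cite[Lemma~2.28]{Semal2021O} as a black box to get the desired representation. You instead pin down $\underline{G}(v)\in\{\Alt(d_v),\Sym(d_v)\}$ via the inclusion $G^+_{(i)}(\{0\},\{0\})\leq G$, use Frobenius reciprocity to see that only the trivial and the $(d_v-1)$-dimensional standard representation contain point-stabilizer invariants, and then exhibit a third irreducible by counting (with the $d_v=4$ case checked by hand). Your route is more self-contained and elementary, at the cost of a small case analysis; the paper's route is cleaner and applies uniformly to any $2$-transitive local action, but relies on an external lemma whose proof is essentially the Frobenius argument you wrote out.
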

	\begin{proof}
		Lemma \ref{Lemme la forme des elements de Sl} ensures the existence of a vertex $v\in V(T)$ such that $C=\mathcal{C}(\Fix_{G}(B_T(v,1))$. For shortening of the formulation, we let $\mathcal{T}=B_T(v,r)$. We recall as observed in Section \ref{section cuspidal rep description} that $\Aut_G(C)\cong \Stab_G(\mathcal{T})/\Fix_G(\mathcal{T})$ where $\Stab_G(\mathcal{T})=\{g\in G \mid g\mathcal{T}\subseteq \mathcal{T}\}= \{ g\in G \lvert gv=v\}=\Fix_G(v)$. In particular, $\Aut_G(C)$ can be realised as a the group of automorphisms of $B_T(v,1)$ obtained by restricting the action of $\Fix_G(v)$. Furthermore, we have that  
		\begin{equation*}
		\begin{split}
		\mathfrak{H}_{\mathcal{S}_0}(\Fix_G(\mathcal{T}))&=\big\{\Fix_G(\mathcal{R})/\Fix_G(\mathcal{T})\lvert \mathcal{R}\in \mathfrak{T}_0, \qq \mathcal{R}\subsetneq \mathcal{T} \\
		&\mbox{ }\mbox{ }\mbox{ }\mbox{ }\mbox{ }\mbox{ }\mbox{ }\mbox{ }\mbox{ }\mbox{ }\mbox{ }\mbox{ }\mbox{ }\mbox{ }\mbox{ }\mbox{ }\mbox{ }\mbox{ }\mbox{ }\mbox{ }\mbox{ }\mbox{ }\mbox{ }\mbox{ }\mbox{ }\mbox{ }\mbox{ }\mbox{ }\mbox{ }\mbox{ }\mbox{ }\mbox{ }\mbox{ }\mbox{ }\mbox{ and } \mathcal{R}\mbox{ is maximal for this property}\big\}\\
		&=\{\Fix_G(f)/\Fix_G(B_T(v,1))\lvert f\in E(B_T(v,1))\}.
		\end{split}
		\end{equation*} 
		Let $d$ be the degree of $v$ in $T$, let $X=E(B_T(v,1))$ and let $e\in X$. Since $G$ is closed subgroup of $\Aut(T)$ acting $2$-transitively on the boundary $\partial T$ \cite[Lemma 3.1.1]{BurgerMozes2000} ensures that $\underline{G}(v)$ is $2$-transitive. In particular, $\Aut_{G}(C)$ is $2$-transitive $X$ and  \cite[Lemma 2.28]{Semal2021O} ensures the existence of an irreducible representation $\sigma$ of $\Aut_{G}(C)$ without non-zero $\Fix_{\Aut_{G}(C)}(e)$-invariant vector. Since $\Fix_G(v)$ is transitive on $E(B_T(v,1))$, this representation does not admit a non-zero $\Fix_{\Aut_{G}(C)}(f)$-invariant vector for any $f\in E(B_T(v,1))$. The lemma follows from the fact that $\Fix_{\Aut_{G}(C)}(f)=\Fix_G(f)/\Fix_G(B_T(v,1))$.
	\end{proof}

	\section{Simple Radu groups are CCR}\label{section radu groups are Type I}
	
		Let $T$ be a $(d_0,d_1)$-semi-regular tree with $d_0,d_1\geq 4$. Let $i$ be a legal coloring of $T$ and let $Y_0,Y_1\subseteq \N$ be two finite subsets. The purpose of this section is to exploit the classification of the irreducible representations of $G^+_{(i)}(Y_0,Y_1)$ obtained in Section \ref{Section spherical and special rep} and Section \ref{section cuspidal rep description} to prove that $G^+_{(i)}(Y_0,Y_1)$ is uniformly admissible and hence CCR.
		
		We recall that a totally disconnected locally compact group $G$ is uniformly admissible if for every compact open subgroup $K$, there exists a positive integer $k_K$ such that $\dim(\Hr{\pi}^K)<k_K$ for all irreducible representation $\pi$ of $G$. In particular, uniformly admissible groups are CCR. The following classical result ensures that the spherical representations of $G$ are uniformly admissible.
	
	\begin{theorem}\label{Theorem les representations spheriques sont admissibles}
		Let $T$ be a $(d_0,d_1)$-semi-regular tree with $d_0,d_1\geq 3$, let $G\leq \Aut(T)$ be a closed non-compact subgroup acting transitively on the boundary of $T$ and let $v\in V(T)$. Then, for every integer $n\geq 1$ there exists a constant $k_n\in \N$ such that ${\rm dim}(\Hr{\pi}^{K_n})<k_n$ for every spherical representation $\pi$ of $G$ admitting a non-zero $\Fix_G(v)$-invariant vector and where $K_n=\Fix_G(B_T(v,n))$ .
	\end{theorem}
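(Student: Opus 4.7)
The plan is to exploit the Gelfand pair structure of $(G, \Fix_G(v))$ provided by Theorems~\ref{thm la classification des spherique cas trans} and \ref{thm la classification des spherique cas 2 orbites}, and to bound $\dim(\Hr{\pi}^{K_n})$ by the dimension of a fixed finite-dimensional function space on $T$, independent of $\pi$.

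First, using the cyclicity of the unique unit $\Fix_G(v)$-invariant vector $\xi_\pi \in \Hr{\pi}^{\Fix_G(v)}$, I would embed $\Hr{\pi}^{K_n}$ into a space of $K_n$-invariant functions on $V(T)$: for each $\eta\in \Hr{\pi}^{K_n}$, set $c_\eta(w)=\langle \pi(g_w)\xi_\pi, \eta\rangle$ where $g_w\in G$ is any element with $g_w v=w$. This is well-defined by the $\Fix_G(v)$-invariance of $\xi_\pi$, is $K_n$-invariant on $V(T)$, and the assignment $\eta\mapsto c_\eta$ is injective by cyclicity of $\xi_\pi$; the two-orbit case is handled analogously with $V(T)$ replaced by the $G$-orbit of $v$. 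Next, I would derive the spherical eigenfunction equations for $c_\eta$: for any $\sigma\in G$ with $d(v,\sigma v) = r$ one has $\pi(\mathbf{1}_{\Fix_G(v)\sigma\Fix_G(v)})\xi_\pi = \mu_r(\pi)\xi_\pi$ for a scalar $\mu_r(\pi)$ depending only on $\varphi_\pi$, and combining this with the transitivity of $\Fix_G(v)$ on each sphere $S(v,r)$ yields
\begin{equation*}
\sum_{w'\in S(w,r)} c_\eta(w') \, = \, \mu_r(\pi) \, c_\eta(w), \quad \forall w\in V(T),\ r\geq 1.
\end{equation*}

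The decisive step is then to show that a $K_n$-invariant function on $V(T)$ satisfying these equations is determined by its restriction to $B_T(v, n+1)$. Applying the $r=1$ equation at a vertex $u$ at distance $m \geq n$ from $v$, the sum of $c_\eta$ over the children of $u$ (its neighbors at distance $m+1$) equals a known linear combination of $c_\eta(u)$ and $c_\eta$ at the parent of $u$; together with the fact that $K_n$ acts transitively on these children (so that they share a common value), this determines that common value, and induction on $m$ then determines $c_\eta$ on all of $V(T)$. One obtains the uniform bound $\dim(\Hr{\pi}^{K_n}) \leq |B_T(v,n+1)|$, so one may take $k_n = |B_T(v,n+1)| + 1$.

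The main obstacle lies in the transitivity claim used above: one needs $K_n$ to act transitively on the neighbors of each vertex of $\partial B_T(v,n)$ lying outside $B_T(v,n)$, and an analogous transitivity at deeper levels. This follows from the $2$-transitivity of $G$ on $\partial T$ (\cite[Lemma~3.1.1]{BurgerMozes2000}) combined with a stabilizer analysis at each level, but the bookkeeping is delicate. In the classical treatments \cite{FigaNebbia1991} and \cite{Nebbia1999} this difficulty is bypassed by realizing the spherical principal series on $L^2(\partial T, \nu_v)$ for a suitable $\Fix_G(v)$-invariant probability measure $\nu_v$, under which $K_n$-invariant vectors correspond to functions constant on the finitely many $K_n$-orbits of $\partial T$, and the uniform bound follows at once.
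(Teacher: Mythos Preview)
Your overall strategy matches the paper's exactly: use the Gelfand pair $(G,\Fix_G(v))$ to pick a spherical unit vector $\xi$, send each $\eta\in\Hr{\pi}^{K_n}$ to the matrix coefficient $g\mapsto\langle\pi(g)\xi,\eta\rangle$, view it as a $K_n$-invariant function on the orbit $Gv$, derive a recursion from the spherical function, and conclude that the function is determined by its restriction to a finite ball. The paper's identity $\int_K\varphi_{\xi,\eta}(gkh)\,\diff\mu(k)=\varphi_{\xi,\eta}(g)\varphi_{\xi,\xi}(h)$ is precisely your eigenfunction equation in integrated form, and injectivity is likewise obtained from cyclicity of $\xi$.

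Where the two arguments diverge is exactly at the point you flag as the obstacle. You want $K_n$ to act transitively on the outward neighbours of every vertex of $\partial B_T(v,n)$ (and analogously at all deeper levels), which would yield the sharp bound $|B_T(v,n+1)|$. For a general closed $G$ that is merely $2$-transitive on $\partial T$ this level-by-level transitivity is not obvious: $2$-transitivity of the local action controls $\Fix_G(u)$, not the much smaller subgroup $\Fix_G(B_T(v,n))$. The paper neither proves nor uses this. Instead it observes that $K_n$ has finite index in $K=\Fix_G(v)$, hence finitely many orbits on $\partial T$; therefore there is some $N_n\ge\max\{2,n\}$, depending only on $n$ and not on $\pi$, such that for every $w\in\partial B_T(v,N_n)$ the entire boundary piece $\partial T(w,v)$ lies in a single $K_n$-orbit. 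From radius $N_n$ onward the recursion (taken with step $2$ via an element $t$ with $d(v,tv)=2$, which also handles the type-preserving case where your $r=1$ step is unavailable) determines the function, and one gets $k_n=|B_T(v,N_n)|$. This soft finite-index argument sidesteps the ``delicate bookkeeping'' you mention, and is also different from the $L^2(\partial T)$ realisation you cite as the classical bypass; the price is a larger but still uniform constant.
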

	\begin{proof}
		Let $K=\Fix_G(v)$ and let $\mu$ be the Haar measure of $G$ renormalised in such a way that $\mu(K)=1$. Theorems \ref{thm la classification des spherique cas trans} and \ref{thm la classification des spherique cas 2 orbites} ensure that $(G,K)$ is a Gelfand pair and we observe that ${\rm dim}(\Hr{\pi}^K)= 1.$ Now, let $\xi$ be a unit vector of $\Hr{\pi}^K$ and let $\eta$ be a unit vector of $\Hr{\pi}^{K_n}.$ Notice that 
		$$\varphi_{\xi, \eta}\fct{G}{\C}{g}{\prods{\pi(g)\xi}{\eta}}$$
		is $K$-right invariant and $K_n$-left invariant continuous function. On the other hand, since ${\rm dim}(\Hr{\pi}^K)= 1$, notice for all $g,h\in G$ that 
		\begin{equation*}
		\begin{split}
		\int_K\varphi_{\xi,\eta}(gkh)\qq\diff\mu(k)&=\prods{\int_K \pi(gkh)\xi}{\eta}\\
		&=\prods{\pi(h)\xi}{\int_K\pi(k^{-1})\pi(g^{-1})\eta}\\
		&=\prods{\pi(h)\xi}{\alpha(\eta, g)\xi}=\overline{\alpha(\eta, g)}\varphi_{\xi,\xi}(h)
		\end{split}
		\end{equation*}
		for some $\alpha(\eta, g)\in \C$. However, $\varphi_{\xi,\xi}(1_G)=1$ and hence $\overline{\alpha(\eta, g)}=\varphi_{\xi,\eta}(g)$. This implies for all $g,h\in G$ that
		\begin{equation}\label{equation de semi sphericite}
		\int_K\varphi_{\xi,\eta}(gkh)\qq \diff \mu(k)=\varphi_{\xi,\eta}(g)\varphi_{\xi,\xi }(h).
		\end{equation}
		
		Since $\varphi_{\xi,\eta}$ is $K$-right invariant and $K_n$-left invariant notice that it can be realised as a function $\phi:Gv\rightarrow \C$ on the orbit $Gv$ of $v$ in $V(T)$ that is constant on the $K_n$-orbits of $v$. On the other hand, since $K_n$ is an open subgroup of the compact group $K$ and since $K$, the index of $K_n$ in $K$ is finite. Since $K$ is transitive on the boundary of the tree, this implies that $K_n$ has finitely many orbit on $\partial T$. In particular, there exists an integer $N_n\geq \max\{2,n\}$ such that $\partial T(w,v)$ is contained in a single $K_n$-orbit for all $w\in \partial B_T(v,N_n)$ where $\partial T(w,v)$ is the set of ends of $T(w,v)=\{u\in V(T)\lvert d_T(u,w)<d_T(u,v)\}$ which are not vertices.
		Now, let $t\in G$ be such that $d_T(v,t v)=2$. Notice that equality \eqref{equation de semi sphericite} ensures that the sum of values of $\phi$ on the vertices at distance $2$ from $gv$ is equal to $\phi(gv)\varphi_{\xi,\xi}(t)$. In particular, this implies that $\phi$ is determined entirely by the values it takes in $B_T(v,N_n)$. Hence, the space $\mathcal{L}_n$ of function $\varphi:G\rightarrow \C$ which are $K$-right invariant, $K_n$-left invariant and satisfy the equality \eqref{equation de semi sphericite} has finite dimension bounded by the cardinality $k_n$ of $B_T(v,N_n)$. Furthermore, since $\pi$ is irreducible notice that $\xi$ is cyclic and therefore that the linear map $\Psi_n: \Hr{\pi}^{K_n}\rightarrow \mathcal{L}_n: \eta \rightarrow \overline{\varphi_{\xi,\eta}}$ is injective. This proves as desired that $\dim(\Hr{\pi}^{K_n})\leq \dim(\mathcal{L}_n)\leq k_n<+\infty$.
	\end{proof}
	This allows one to prove what we announced at the beginning of the section.
	\begin{theorem}\label{theorem simple radu groups are unifmroly admissible}
		$G^+_{(i)}(Y_0,Y_1)$ is uniformly admissible, hence CCR.
	\end{theorem}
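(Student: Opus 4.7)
The plan is to exploit the trichotomy of irreducible representations of $G=G^+_{(i)}(Y_0,Y_1)$ (spherical, special, cuspidal) and combine the uniform bound for spherical representations proved just above, the finiteness of the set of special representations, and Harish-Chandra's uniform admissibility for square-integrable representations applied to the cuspidal family.

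First I would reduce the problem to bounding $\dim(\mathcal{H}_\pi^{U})$ for $U$ ranging over the generic filtration $\mathcal{S}_0$. Indeed, given any compact open subgroup $K\leq G$, since $\mathcal{S}_0$ is a basis of neighbourhoods of the identity there exists some $U = \Fix_G(B_T(v,n)) \in \mathcal{S}_0$ with $U\subseteq K$; then $\mathcal{H}_\pi^K\subseteq \mathcal{H}_\pi^U$, so a uniform bound on $\dim(\mathcal{H}_\pi^U)$ across irreducible $\pi$ yields one on $\dim(\mathcal{H}_\pi^K)$. It therefore suffices to produce, for each $U\in \mathcal{S}_0$, an integer $k_U$ bounding $\dim(\mathcal{H}_\pi^U)$ uniformly in $\pi$.

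Fix such a $U$. For the spherical representations $\pi$, Theorem \ref{Theorem les representations spheriques sont admissibles} directly supplies an integer $k_n$ with $\dim(\mathcal{H}_\pi^{\Fix_G(B_T(v,n))})<k_n$ for every spherical $\pi$ admitting a non-zero $\Fix_G(v)$-invariant vector; by Theorems \ref{thm la classification des spherique cas trans} and \ref{thm la classification des spherique cas 2 orbites} every spherical representation admits such a vector for every $v$ except possibly two exceptional ones, which contribute a bounded extra term. For the special representations, Theorem \ref{thm la classification des speciale} shows that there are at most two equivalence classes; each is admissible since square-integrable, so their contribution to $\sup_\pi \dim(\mathcal{H}_\pi^U)$ is bounded by a constant depending only on $U$. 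For the cuspidal representations, Theorem \ref{la classification des reresentations cuspidale} shows they are all induced from compact open subgroups, hence square-integrable, and the classical result \cite[Corollary of Theorem 2]{Harish1970} provides an integer bounding $\dim(\mathcal{H}_\pi^U)$ uniformly over all square-integrable $\pi$.

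Taking the maximum of the three bounds gives the desired $k_U$, and hence a uniform bound $k_K$ for the original $K$. This shows $G^+_{(i)}(Y_0,Y_1)$ is uniformly admissible and therefore CCR. No step is a genuine obstacle: all the hard work has already been carried out in Section \ref{Section classification des rep de simple radu groups} (square-integrability of cuspidal representations) and in Theorem \ref{Theorem les representations spheriques sont admissibles} (the spherical bound); the present proof merely assembles these inputs, with the only mild care needed being the reduction from an arbitrary compact open $K$ to members of the generic filtration $\mathcal{S}_0$.
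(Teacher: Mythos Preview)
Your proposal is correct and follows essentially the same route as the paper: reduce from an arbitrary compact open $K$ to a member of the basis $\{\Fix_G(B_T(v,n))\}$, invoke Theorem~\ref{Theorem les representations spheriques sont admissibles} for the spherical part, and invoke \cite[Corollary of Theorem 2]{Harish1970} for the square-integrable part. The only cosmetic difference is that the paper does not single out the special representations but simply observes (via Theorem~\ref{thm la classification des speciale}) that they are square-integrable and hence already covered by Harish-Chandra's bound together with the cuspidal ones; your separate treatment of the two special classes is unnecessary but harmless.
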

	\begin{proof}
		For shortening of the formulation, let $G=G^+_{(i)}(Y_0,Y_1)$. Let $K\leq G$ be a compact open subgroup, let $v\in V(T)$ and let $K_n=\Fix_G(B_T(v,n))$ for every $n\in \N$.  Since $(K_n)_{n\in N}$ is a basis of neighbourhood of the identity there exists some $N\in \N$ such that $K_N\subseteq K$. In particular, Theorem \ref{Theorem les representations spheriques sont admissibles} ensures the existence of a positive integer $k^1_K$ such that $\dim(\Hr{\pi}^K)\leq k^1_K$ for every spherical representation $\pi$ of $G$. On the other hand, \cite[Corollary of Theorem 2]{Harish1970} ensures the existence of a positive integer $k^2_K$ such that $\dim(\Hr{\pi}^K)\leq k^2_K$ for every irreducible square-integrable representation $\pi$ of $G$ and Theorems \ref{thm la classification des speciale} and \ref{la classification des reresentations cuspidale} ensures that the special and cuspidal representations are square-integrable. The result therefore follows from the fact that each irreducible $\pi$ of $G$ is either spherical, special or cuspidal.
	\end{proof}
	\newpage
	
	\begin{appendices}
		
	\section{Irreducible representations of a group and subgroups of index $2$}\label{section irreducvtible pour des groupes d'indice 2}
	
	The purpose of this appendix is to explicit the relations between the irreducible unitary representations of a locally compact group $G$ and the irreducible representations of its closed subgroups $H\leq G$ of index $2$. Among other things, Theorem \ref{les rep dun group loc compact par rapport a celle d'un de ses sous groupes} below explicit the correspondence between those representations. Furthermore, when $G$ is a totally disconnected locally compact group, we show that $G$ is uniformly admissible if and only if $H$ is uniformly admissible see Lemma \ref{lemma G admissible iff  H is admissible}.
	
	The relevance of this appendix is given by Theorem \ref{Corollary Radu simple then Radu} which ensures that every Radu group $G$ lies in a finite chain $H_n\geq... \geq H_0$ with $n\in \{0,1,2,3\}$ such that $H_n=G$, $\lb H_t: H_{t-1} \rb =2$ for all $t$ and where $H_0$ is conjugate in $\Aut(T)^+$ to $G_{(i)}^+(Y_0,Y_1)$ if $T$ is a $(d_0,d_1)$-semi-regular tree with $d_0,d_1\geq 6$. As a direct consequence, we therefore obtain a description of the irreducible representations of those groups and observe that they are uniformly admissible. To be more precise, the spherical and special representations of any Radu group $G$ are classified by Section \ref{Section spherical and special rep} and a description of the cuspidal representations of those groups can be obtained from the description of cuspidal representations of $G_{(i)}^+(Y_0,Y_1)$ given in Sections \ref{Section classification des rep de simple radu groups} by applying Theorem \ref{les rep dun group loc compact par rapport a celle d'un de ses sous groupes} $n$ times (where $n$ is as above).
	
	\subsection{Preliminaries}\label{Section appendice reminder}
	Let $G$ be a locally compact group and let $H\leq G$ be a closed normal subgroup of finite index (in particular $H$ is open in $G$). The purpose of this section is to recall three operations that can be applied either to the representations of $G$ or to those of $H$.
	
	We start with the conjugation of representations. For every $g\in G$, we denote by $$c(g): G\rightarrow G: h \mapsto ghg^{-1}$$
	the conjugation map and for every representation $\pi$ of $H$ we define the morphism $\pi^g= \pi \circ c(g)$. Since $H$ is normal in $G$, notice that $\pi^g$ is a well defined representation of $H$ on the Hilbert space $\Hr{\pi}$. This representation is called the \tg{conjugate representation} of $\pi$ by $g$. Furthermore, notice that the conjugate representation $\pi^g$ depends up to equivalence only on the coset $gH$ and that $\pi^g$ is irreducible if and only if $\pi$ is irreducible. In particular, the action by conjugation of $G$ on $\widehat{H}$ pass to the quotient $G/H$.
	
	Now, let $G'$ be an other topological group, let  $\phi:G \rightarrow G'$ be a continuous group homomorphism, let $\pi$ be a representations of $G$ and let $\chi$ be a unitary character of $G'$. We define the \tg{twisted representation} $\pi^\chi$ as the representation of $G$ on $\Hr{\pi}$ given by 
	\begin{equation*}
	\pi^\chi (g)=\chi(\phi(g)) \pi(g)\q \forall g\in G.
	\end{equation*}
	Notice that this representation is still continuous and unitary since $\chi, \pi$ and $\phi$ are continuous group homomorphism and since $\chi$ and $\pi$ are unitary. Furthermore, notice that $\pi^\chi\cong \pi$ if $\chi$ is the trivial representation of $G'$. 
	\begin{lemma}
		$\pi^\chi$ is irreducible if and only if $\pi$ is irreducible.
	\end{lemma}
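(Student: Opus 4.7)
The plan is to observe that the operation $\pi \mapsto \pi^{\chi}$ preserves the lattice of closed invariant subspaces, which immediately gives equivalence of irreducibility. The key point is that for every $g \in G$, the scalar $\chi(\phi(g)) \in \mathbb{T}$ is a nonzero complex number, so for any closed subspace $W \subseteq \Hr{\pi}$ and any $g \in G$, the condition $\pi^{\chi}(g) W \subseteq W$ is equivalent to $\chi(\phi(g)) \pi(g) W \subseteq W$, which is equivalent to $\pi(g) W \subseteq W$.

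First, I would quickly verify that $\pi^{\chi}$ is indeed a representation: since $\chi \circ \phi : G \to \mathbb{T}$ is a continuous group homomorphism into the centre of the unitary operators on $\Hr{\pi}$, the map $\pi^{\chi}(g) = \chi(\phi(g))\pi(g)$ is strongly continuous, unitary, and satisfies
\begin{equation*}
\pi^{\chi}(gh) = \chi(\phi(g))\chi(\phi(h))\pi(g)\pi(h) = \pi^{\chi}(g)\pi^{\chi}(h),
\end{equation*}
so this part is routine. Then I would state the invariance equivalence from the previous paragraph as the main observation: the closed $\pi$-invariant subspaces of $\Hr{\pi}$ coincide exactly with the closed $\pi^{\chi}$-invariant subspaces.

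From this, irreducibility transfers in both directions. Alternatively, one may note that $\chi^{-1} := \overline{\chi}$ is still a unitary character of $G'$ and that $(\pi^{\chi})^{\chi^{-1}} = \pi$, so the operation $\pi \mapsto \pi^{\chi}$ is an involution on (equivalence classes of) representations of $G$ on $\Hr{\pi}$; combined with the fact that it takes subrepresentations to subrepresentations (by the invariance equivalence), this yields the ``if and only if''. There is no real obstacle here; the lemma is essentially a one-line remark once one notes that multiplication by nonzero scalars preserves invariance, and the proof should occupy only a few lines in the paper.
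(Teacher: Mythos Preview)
Your proposal is correct and takes essentially the same approach as the paper: both arguments rest on the observation that multiplying by the nonzero scalar $\chi(\phi(g))$ does not change spans or invariant subspaces. The only cosmetic difference is that the paper phrases this via the cyclic-vector criterion (the span of $\{\pi(g)\xi\}$ equals the span of $\{\pi^\chi(g)\xi\}$ for every $\xi$), whereas you phrase it directly via the lattice of closed invariant subspaces; these are equivalent formulations of the same one-line remark.
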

	\begin{proof}
		Since $\chi(g)$ is a unitary complex number for every $g\in G$, notice that, for every $\xi \in \Hr{\pi}$, the subspace of $\Hr{\pi}$ spanned by $\{\pi(g)\xi\mid g\in G\}$ is the same than the subspace spanned by $\{\chi(g)\pi(g)\xi\mid g\in G\}=\{\pi^\chi (g)\mid g\in G\}$. The result therefore follows from the fact that a representation is irreducible if and only if every non-zero vector is cyclic.
	\end{proof}
	
	Finally, we recall the notion of \textbf{induction}. Since most of the complexity vanishes when $H$ is an open subgroup of $G$ (because the quotient space $G/H$ is discrete) and since it is the only set up encountered in this notes, we will work under this hypothesis. We refer to \cite[Chapters $2.1$ and $2.2$]{KaniuthTaylor2013} for details. Let $G$ be a locally compact group, let $H\leq G$ be an open subgroup and let $\sigma$ be a representation of $H$. The induced representation $\Ind_{H}^G(\sigma)$ is a representation of $G$ with representation space given by
	$$\Ind_H^G(\Hr{\sigma})=\bigg\{\phi:G\rightarrow\Hr{\sigma}\Big\lvert  \phi(gh)=\sigma(h^{-1})\phi(g), \s{gH\in G/H}{}\prods{\phi(g)}{\phi(g)}<+\infty\qq \bigg\}.$$
	For $\psi, \phi\in \Ind_H^G(\Hr{\sigma})$, we let
	\begin{equation*}
	\prods{\psi}{\phi}_{\Ind_H^G(\Hr{\sigma})}= \s{gH\in G/H}{} \prods{\psi(g)}{\phi(g)}.
	\end{equation*}
	Equipped with this inner product, $\Ind_H^G(\Hr{\sigma})$ is a separable complex Hilbert space. The induced representation $\Ind_H^G(\sigma)$ is the representation of $G$ on $\Ind_H^G(\Hr{\sigma})$ defined by
	\begin{equation*}
	\big\lb\Ind_H^G(\sigma)(h)\big\rb\phi(g)=\phi(h^{-1}g)\q \forall \phi \in \Ind_H^G(\Hr{\sigma}) \mbox{ and }\forall g,h\in G.
	\end{equation*}

	\subsection{The explicit correspondence}
	
	We come back to the context we are interested in. Let $G$ be a locally compact group and let $H\leq G$ be a closed subgroup of index $2$ in $G$. In particular, $H$ is an normal open subgroup of $G$ and the quotient $G'=G/H$ is isomorphic to the cyclic group of order two. We let $\tau$ denote the only irreducible non-trivial representation of $G/H$. Let $t\in G-H$, let $\phi: G \rightarrow G/H$ be the canonical projection on the quotient and, for every representation $\pi$ of $G$, let $\pi^\tau$ be the twisted representation of $G$ as defined in the Section \ref{Section appendice reminder}. The purpose of this section is to prove the following theorem.
	\begin{theorem}\label{les rep dun group loc compact par rapport a celle d'un de ses sous groupes}
		For every irreducible representation $\pi$ of $G$ we have that:
		\begin{itemize}
			\item $\pi\not\cong \pi^\tau$ if and only if $\Res_H^G(\pi)$ is an irreducible representation of $H$ and in that case $\Res_H^G(\pi)\cong\Res_H^G(\pi)^t$.
			\item $\pi\cong \pi^\tau$ if and only if $\Res_H^G(\pi)\cong \sigma \oplus \sigma^t$ for some irreducible representation $\sigma$ of $H$ and in that case $\sigma\not \cong \sigma^t$.
		\end{itemize}
		For every irreducible representation $\sigma$ of $H$, we have that:
		\begin{itemize}
			\item $\sigma\not\cong \sigma^t$ if and only if $\Ind_H^G(\sigma)$ is an irreducible representation of $G$ and in that case $\Ind_H^G(\sigma)\cong\Ind_H^G(\sigma)^\tau$.
			\item $\sigma \cong \sigma^t$ if and only if $\Ind_H^G(\sigma)\cong \pi \oplus \pi^\tau$ for some irreducible representation $\pi$ of $G$ and in that case $\pi \not \cong \pi^\tau$.
		\end{itemize}
		Furthermore:
		\begin{enumerate}
			\item Every irreducible representation $\pi$ of $G$ satisfies $\pi\leq \Ind_H^G(\sigma)$ for some irreducible representation $\sigma$ of $H$. 
			\item Every irreducible representation $\sigma$ of $H$ satisfies $\sigma\leq \Res_H^G(\pi)$ for some irreducible representation $\pi$ of $G$.
		\end{enumerate} 
	\end{theorem}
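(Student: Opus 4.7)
The plan is to reduce everything to four standard identities and then perform a case analysis driven by Schur's lemma. First, Frobenius reciprocity for open subgroups: $\Hom_G(\pi, \Ind_H^G\sigma) \cong \Hom_H(\Res_H^G\pi, \sigma)$. Second, a Mackey-style restriction formula: choosing $\{1_G, t\}$ as a transversal and realizing $\Ind_H^G(\Hr{\sigma})$ as $\Hr{\sigma}\oplus\Hr{\sigma}$ via $\phi \mapsto (\phi(1_G), \phi(t))$, one reads off $\Res_H^G\Ind_H^G\sigma \cong \sigma \oplus \sigma^t$. Third, the projection formula: since $\tau|_H$ is trivial, $\Ind_H^G(\sigma) \otimes \tau \cong \Ind_H^G(\sigma \otimes \Res_H^G\tau) \cong \Ind_H^G(\sigma)$, so $\Ind_H^G(\sigma)^\tau \cong \Ind_H^G(\sigma)$ for every $\sigma$. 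By the same mechanism $\Ind_H^G\Res_H^G\pi \cong \pi \otimes \Ind_H^G(1_H) \cong \pi \oplus \pi^\tau$. Finally, $\pi(t)$ is always an $H$-intertwiner of $\Res_H^G\pi$ with $\Res_H^G(\pi)^t$, so the latter two are equivalent for any $\pi$.

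Combining Frobenius with the restriction formula yields $\End_G(\Ind_H^G\sigma) \cong \Hom_H(\sigma \oplus \sigma^t, \sigma)$ for $\sigma$ irreducible; by Schur this has complex dimension $1$ when $\sigma \not\cong \sigma^t$ and dimension $2$ when $\sigma \cong \sigma^t$. In the first case $\Ind_H^G\sigma$ is irreducible, and the projection formula already gives $\Ind_H^G(\sigma)^\tau \cong \Ind_H^G(\sigma)$. In the second, the commutant is a $2$-dimensional unital $*$-subalgebra of $\mathcal{B}(\Ind_H^G\Hr{\sigma})$, hence commutative and isomorphic to $\C \oplus \C$; so $\Ind_H^G\sigma = \pi_1 \oplus \pi_2$ with $\pi_1, \pi_2$ inequivalent irreducible. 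Twisting gives $\pi_1 \oplus \pi_2 \cong \pi_1^\tau \oplus \pi_2^\tau$, leaving the alternatives $\pi_i \cong \pi_i^\tau$ for each $i$, or $\pi_2 \cong \pi_1^\tau$. To exclude the former, note that $\Res_H^G\Ind_H^G\sigma \cong \sigma \oplus \sigma$ forces $\Res_H^G\pi_1 \cong \sigma$, whence $\Ind_H^G\Res_H^G\pi_1 \cong \pi_1 \oplus \pi_1^\tau$ must coincide with $\Ind_H^G\sigma = \pi_1 \oplus \pi_2$; so $\pi_2 \cong \pi_1^\tau$, and $\pi_1^\tau \cong \pi_1$ would contradict $\pi_1 \not\cong \pi_2$.

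The statements for $\pi \in \widehat{G}$ dualize. Frobenius yields $\End_H(\Res_H^G\pi) \cong \Hom_G(\pi \oplus \pi^\tau, \pi)$, of dimension $1$ or $2$ according as $\pi \not\cong \pi^\tau$ or $\pi \cong \pi^\tau$, giving irreducibility of $\Res_H^G\pi$ in the first case and a decomposition $\Res_H^G\pi = \sigma \oplus \sigma'$ with $\sigma \not\cong \sigma'$ in the second. The $t$-conjugation shows $\sigma \oplus \sigma' \cong \sigma^t \oplus {\sigma'}^t$, so either both summands are $t$-fixed or $t$ swaps them; the first alternative is excluded by the previous step, since $\sigma \cong \sigma^t$ would place $\pi$ inside $\Ind_H^G\sigma = \pi_1 \oplus \pi_1^\tau$ with $\pi_1 \not\cong \pi_1^\tau$, contradicting $\pi \cong \pi^\tau$. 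The final two enumerations are immediate from Frobenius: any irreducible constituent $\sigma$ of $\Res_H^G\pi$ satisfies $\Hom_G(\pi, \Ind_H^G\sigma) \ne 0$, so $\pi \leq \Ind_H^G\sigma$; conversely, any irreducible subrepresentation $\pi$ of $\Ind_H^G\sigma$ satisfies $\sigma \leq \Res_H^G\pi$. No substantive obstacle arises; the only ingredient requiring care is the Mackey restriction identity in the Hilbert-space setting, which is routine for an open subgroup of finite index. The whole argument is then a clean orbit analysis: the $\mathbb{Z}/2$-actions by $\tau$-twist on $\widehat{G}$ and by $t$-conjugation on $\widehat{H}$ are matched through induction and restriction, with the two bulleted dichotomies recording the fixed and the free orbits respectively.
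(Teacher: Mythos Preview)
Your argument is correct and complete. The approach differs from the paper's in its organising principle: you front-load four functorial identities (Frobenius reciprocity, the Mackey restriction $\Res_H^G\Ind_H^G\sigma\cong\sigma\oplus\sigma^t$, the projection formula $\Ind_H^G\Res_H^G\pi\cong\pi\oplus\pi^\tau$, and the $t$-conjugation intertwiner $\pi(t)$) and then derive every case by reading off commutant dimensions. The paper instead establishes the restriction dichotomy by a direct cyclic-vector and invariant-subspace argument (its Lemma on $\Res_H^G(\pi)$), proves separately that $\pi\cong\pi^\tau$ forces a splitting by exhibiting a non-scalar self-intertwiner, and only then invokes Frobenius reciprocity (in Mackey's ``weak'' form, with intertwining numbers) together with the explicit Mackey restriction to handle the induction side; it never uses the projection formula $\Ind_H^G\Res_H^G\pi\cong\pi\oplus\pi^\tau$. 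Your route is tighter and treats the two directions symmetrically as an orbit-matching between the $\tau$-twist action on $\widehat G$ and the $t$-conjugation action on $\widehat H$; the paper's route is more elementary in that it avoids the projection formula and the $C^*$-algebraic classification of the $2$-dimensional commutant, at the cost of a longer hands-on argument. One small cosmetic point: when you compute $\End_H(\Res_H^G\pi)$ via Frobenius you obtain $\Hom_G(\pi,\pi\oplus\pi^\tau)$ rather than $\Hom_G(\pi\oplus\pi^\tau,\pi)$; the two agree in dimension (and by adjoints), so the conclusion is unaffected.
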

	The proof of this theorem is gathered in the following few results. 
	\begin{lemma}\label{Lemma les rep de G se decompose ou pas en irred si H est d'indice 2}
		Let $\pi$ be an irreducible representation of $G$. Then exactly one of the following happens:
		\begin{itemize}
			\item $\Res_H^G(\pi)$ is an irreducible representation of $H$ and $\Res_H^G(\pi)\cong \Res_H^G(\pi)^t$.
			\item $\Res_H^G(\pi)\cong\sigma\oplus \sigma^t$ for some irreducible representation $\sigma$ of $H$.
		\end{itemize}
	\end{lemma}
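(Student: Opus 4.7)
The plan is to adapt the classical Clifford-theoretic dichotomy for normal subgroups to the continuous unitary setting, exploiting the fact that $t^2 \in H$ since $[G:H]=2$ and $H$ is normal in $G$. I would split the proof according to whether $\Res_H^G(\pi)$ is irreducible, which obviously gives two mutually exclusive alternatives.

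First, suppose $\Res_H^G(\pi)$ is irreducible, and denote it by $\sigma$. The identity $\pi(t)\sigma(h)\pi(t)^{-1}=\pi(tht^{-1})=\sigma^t(h)$ shows that $\pi(t)$ is a unitary intertwiner between $\sigma$ and $\sigma^t$, whence $\sigma\cong\sigma^t$ and we land in the first alternative. Suppose instead that $\Res_H^G(\pi)$ is reducible and pick any proper non-zero closed $H$-invariant subspace $W\subseteq\Hr{\pi}$. Then $\pi(t)W$ is again $H$-invariant by normality of $H$, and since $\pi(t)^2=\pi(t^2)\in\pi(H)$ preserves $W$, both $W\cap\pi(t)W$ and $W+\pi(t)W$ are $\pi(t)$-stable, hence $G$-invariant. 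Irreducibility of $\pi$ forces
\begin{equation*}
W\cap\pi(t)W=0 \qquad \text{and} \qquad W+\pi(t)W=\Hr{\pi}.
\end{equation*}

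The main technical step, which I expect to require the most care, is to show that $W$ is automatically $H$-irreducible, so that no appeal to a decomposition into irreducibles of $\Res_H^G(\pi)$ (which is not automatic in the infinite-dimensional unitary setting) is needed. I would argue by contradiction: if $W'\subsetneq W$ were a proper non-zero closed $H$-invariant subspace, the same reasoning would give $W'+\pi(t)W'=\Hr{\pi}$ and $W'\cap\pi(t)W'=0$; writing an arbitrary $v\in W$ as $v=w'+\pi(t)u'$ with $w',u'\in W'$ forces $\pi(t)u'=v-w'\in W\cap\pi(t)W=0$, so $v=w'\in W'$ and $W\subseteq W'$, a contradiction. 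This dimension-free argument replaces the usual finite-dimensional counting.

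To finish, decompose $\Hr{\pi}=W\oplus W^{\perp}$ orthogonally; the complement is $H$-invariant because $\pi$ is unitary, is proper because $W\neq 0$, and by the previous paragraph is also $H$-irreducible. Let $\sigma,\sigma'$ denote the $H$-representations carried by $W$ and $W^{\perp}$. The map $\pi(t):W\to\pi(t)W$ is an $H$-equivariant unitary isomorphism from $\sigma^{t^{-1}}$ to the $H$-action on $\pi(t)W$, and $\sigma^{t^{-1}}\cong\sigma^t$ because $t^2\in H$ makes conjugation by $t^{-2}$ inner in $H$. The orthogonal projection $\pi(t)W\to W^{\perp}$ is an $H$-equivariant bounded map whose kernel is $\pi(t)W\cap W=0$ and which is non-zero since $\pi(t)W\not\subseteq W$; Schur's lemma (via the polar decomposition of an injective intertwiner between irreducible unitary representations) then supplies a unitary equivalence $\sigma'\cong\sigma^t$, yielding $\Res_H^G(\pi)\cong\sigma\oplus\sigma^t$ as required.
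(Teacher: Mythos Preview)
Your argument follows essentially the same Clifford-theoretic route as the paper: pick a proper closed $H$-invariant subspace, use that $W\cap\pi(t)W$ and $W+\pi(t)W$ are $G$-invariant (via $\pi(t)^2\in\pi(H)$) together with irreducibility of $\pi$ to force the decomposition, then identify the complementary summand with $\sigma^t$ through $\pi(t)$. The only differences are cosmetic---your contradiction $W\subseteq W'$ in place of the paper's argument that $N+\pi(t)N=\{0\}$, and your use of $W^\perp$ plus Schur's lemma rather than $\pi(t)M$ directly---and both versions tacitly treat the algebraic sum $W+\pi(t)W$ as closed.
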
 
	\begin{proof}
		If $\Res_H^G(\pi)$ is an irreducible representation of $H$, notice for every $h\in H$ and every $\xi \in \Hr{\pi}$ that
		\begin{equation*}
		\begin{split}
			\pi(t)\lb \Res_H^G(\pi)(h)\rb \xi&=\pi(t)\pi(h)\xi=\pi(tht^{-1})\pi(t)\xi\\
			&=\lb \Res_H^G(\pi)(tht^{-1})\rb\pi(t)\xi=\lb\Res_H^G(\pi)^t(h)\rb\pi(t)\xi.
		\end{split}			
		\end{equation*}
		In particular, $\pi(t):\Hr{\pi}\rightarrow\Hr{\pi}$ is an intertwining operator between $\Res_H^G(\pi)$ and $\Res_H^G(\pi)^t$ which settles the first case.
		
		Now, suppose that $\Res_H^G(\pi)$ is not an irreducible representation of $G$. Since $\pi$ is irreducible, any non-zero $\xi\in \Hr{\pi}$ is a cyclic vector. Hence, the subspace spanned by $\{\pi(g)\xi \mid g\in G\}$ is dense in $\Hr{\pi}$. On the other hand, since $\Res_H^G(\pi)$ is not irreducible, there exists a non-zero vector $\xi \in \Hr{\pi}$ such that the subspace spanned by $\{\pi(h)\xi \mid h\in H\}$ is not dense in $\Hr{\pi}$. Let $M$ denote the closure of this space. First, let us show that $(\Res_H^G(\pi),M)$ is irreducible. To this end, let $N$ be a $\pi(H)$-invariant subspace of $M$ and let us show that $N$ can not be a proper subspace of $M$. Notice that for every  closed $\pi(H)$-invariant subspace $L$ of $\Hr{\pi}$, the subspace $\pi(t)L$ is also $\pi(H)$-invariant since $\pi(H)\pi(t)L=\pi(Ht)L=\pi(tH)L=\pi(t)L$. Now, since $\xi$ is a cyclic vector for $\pi$, notice that $\Hr{\pi}=M+ \pi(t)M$ (where the sum is a priori not a direct sum). On the other hand, since $\Hr{\pi }\not=M$ and since $t^2\in H$, notice that $M\not \subseteq \pi(t)M$. In particular, replacing $N$ by $N^\perp\cap M$ if needed, we can suppose that $\Hr{\pi}\not=N +\pi(t) M$ and therefore that $\Hr{\pi}\not= N+\pi(t)N $. On the other hand, $N+ \pi(t)N$ is a closed $\pi(G)$-invariant subspace of $\pi$. Since $\pi$ is irreducible, this implies that $N+\pi(t)N=\{0\}$ and therefore that $N=\{0\}$ which proves that $(\Res_H^G(\pi),M)$ is irreducible. Since $\pi(t)\pi(t)M=M$, the same reasoning shows that $(\Res_H^G(\pi),\pi(t)M)$ is an irreducible representation of $H$. In particular, since $M^\perp$ is a closed $\pi(H)$-invariant subspace of $\pi(t)M$ and since $(\pi(t)M)^{\perp}$ is a closed $\pi(H)$-invariant subspace of $M$ this proves that $\Hr{\pi}=M\oplus \pi(t)M$. Now let $(\sigma, \Hr{\sigma})=(\Res_H^G(\pi), M)$ and notice that the unitary operator $\pi(t):\pi(t)M\rightarrow M$ intertwines $(\Res_H^G(\pi),\pi(t)M)$ and $(\sigma^t,\Hr{\sigma})$ since for every $\xi \in \pi(t)M$ and for every $h\in H$, we have
		\begin{equation*}
		\begin{split}
		\pi(t)\lb \Res_H^G(\pi)(h)\rb\xi&=\pi(t)\pi(h)\pi(t^{-1})\pi(t)\xi=\pi(tht^{-1})\pi(t)\xi \\
		&=\lb \Res_H^G(\pi)(tht^{-1})\rb \pi(t)\xi =\sigma^t(h)\pi(t)\xi.
		\end{split}
		\end{equation*} 
		This proves as desired that $\Res_H^G(\pi)\cong \sigma \oplus \sigma^t$ for some irreducible representations $\sigma$ of $H$. 
	\end{proof}
		\begin{lemma}\label{Lemma equivalentce criterion between sigma et sigma g et pi et pitau}
		Let $\pi$ be an irreducible representation of $G$ such that $\pi \cong \pi^\tau$. Then, $\Res_H^G(\pi)\cong\sigma \oplus \sigma^t$ for some irreducible representations $\sigma$ of $H$.
	\end{lemma}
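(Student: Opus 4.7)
The plan is to show that the first alternative of Lemma \ref{Lemma les rep de G se decompose ou pas en irred si H est d'indice 2} is incompatible with the hypothesis $\pi \cong \pi^\tau$, so that the second alternative must be the one that holds.

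First, I would invoke Lemma \ref{Lemma les rep de G se decompose ou pas en irred si H est d'indice 2}: exactly one of the following occurs, either $\Res_H^G(\pi)$ is irreducible (and self-conjugate under $t$), or $\Res_H^G(\pi) \cong \sigma \oplus \sigma^t$ for some irreducible representation $\sigma$ of $H$. The goal is to rule out the first possibility, for then the second automatically gives the conclusion.

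To rule out the first possibility, suppose for contradiction that $\Res_H^G(\pi)$ is irreducible. The assumption $\pi \cong \pi^\tau$ provides a unitary intertwiner $U : \Hr{\pi} \to \Hr{\pi}$ satisfying
\begin{equation*}
U \pi(g) = \pi^\tau(g) U = \tau(\phi(g))\, \pi(g)\, U \qquad \forall g \in G.
\end{equation*}
Restricting to $g = h \in H$ we have $\tau(\phi(h)) = 1$, so $U$ commutes with every operator $\pi(h)$, that is, $U$ is an intertwiner of $\Res_H^G(\pi)$ with itself. Since $\Res_H^G(\pi)$ is assumed irreducible, Schur's lemma forces $U = \lambda\, I$ for some $\lambda \in \C$, and since $U$ is unitary we have $\lambda \neq 0$. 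Evaluating the intertwining relation at the chosen element $t \in G - H$, for which $\tau(\phi(t)) = -1$, yields $\lambda\, \pi(t) = -\lambda\, \pi(t)$, hence $\lambda = 0$ (since $\pi(t)$ is itself unitary and in particular nonzero), contradicting $\lambda \neq 0$.

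This contradiction shows that $\Res_H^G(\pi)$ is reducible, so by Lemma \ref{Lemma les rep de G se decompose ou pas en irred si H est d'indice 2} it must decompose as $\Res_H^G(\pi) \cong \sigma \oplus \sigma^t$ for some irreducible representation $\sigma$ of $H$, which is exactly the desired conclusion. The proof is essentially a Schur's lemma argument and presents no real obstacle; the only point requiring care is keeping track of the different roles played by $H$-elements (on which $\tau \circ \phi$ is trivial) and the element $t$ (on which $\tau \circ \phi$ takes the value $-1$) when applying the intertwining relation.
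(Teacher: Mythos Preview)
Your proof is correct and follows essentially the same approach as the paper's: invoke the dichotomy of Lemma \ref{Lemma les rep de G se decompose ou pas en irred si H est d'indice 2}, assume irreducibility of the restriction, use Schur's lemma to force the intertwiner $U$ between $\pi$ and $\pi^\tau$ to be a scalar, and then obtain a contradiction by evaluating the intertwining relation at an element of $G-H$ where the twist contributes a factor $-1$.
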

	\begin{proof}
		Lemma \ref{Lemma les rep de G se decompose ou pas en irred si H est d'indice 2} ensures that $\Res_H^G(\pi)$ is either irreducible or split as desired. Suppose for a contradiction that $\sigma= \Res_H^G(\pi)$ is irreducible and let $\mathcal{U}: \Hr{\pi}\rightarrow \Hr{\pi}$ be the unitary operator intertwining $\pi$ and $\pi^\tau$. Notice that $\pi$ and $\sigma$ have the same representation space $\Hr{\pi}$. Furthermore, for every $h\in H$, we have that $\pi(h)=\pi^\tau(h)=\sigma(h)$. In particular, $\mathcal{U}$ is a unitary operator that intertwines $\sigma$ with it self. Since $\sigma$ is irreducible this implies that $\mathcal{U}$ is a scalar multiple of the identity. However, this is impossible since for every $h\in H$ and every $\xi \in \Hr{\pi}$ we have 
		\begin{equation*}
		\mathcal{U}\pi(th)\xi=\pi^{\tau}(th)\mathcal{U}\xi=-\pi(th)\mathcal{U}\xi.
		\end{equation*} 
		We obtain as desired that $\Res_H^G(\pi)\cong\sigma \oplus\sigma^t$ for some irreducible representation $\sigma$ of $H$ when $\pi\cong \pi^\tau$.
	\end{proof}
	We now recall the weak version of Frobenius reciprocity that will be used for the rest of the proof of Theorem \ref{les rep dun group loc compact par rapport a celle d'un de ses sous groupes}.
	\begin{theorem}[{\cite[Corollary $1$ of Theorem $3.8$]{Mackey1976theory}} ]\label{criterion of mackey weak frobenius reciprocity}
		Let $G$ be a locally compact group and let $H\leq G$ be a closed subgroup of $G$. Then, for every representation $\pi$ of $G$ and every representation $\sigma$ of $H$ we have
		\begin{equation*}
		{\rm I}(\Ind_H^G(\sigma),\pi)\leq {\rm I} (\sigma, \Res_H^G(\pi)),
		\end{equation*}
		where ${\rm I}(\pi_1,\pi_2)$ is the dimension of the space of intertwining operators between the two representations $\pi_1$ and $\pi_2$. Furthermore, if the index of $H$ in $G$ is finite, this relation becomes an equality.
	\end{theorem}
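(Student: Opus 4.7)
My plan is to construct an injective linear map
\begin{equation*}
\Phi: \Hom_G(\Ind_H^G(\sigma), \pi) \to \Hom_H(\sigma, \Res_H^G(\pi))
\end{equation*}
which becomes bijective when $[G:H]<\infty$. I would focus on the setting actually invoked in this paper, where $H$ is open in $G$ (automatic when the index is finite); the general closed-subgroup case requires different machinery via quasi-invariant measures on $G/H$ and smoothing approximations, but is not needed for the applications here.

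For $\xi\in\Hr{\sigma}$, define $f_\xi:G\to\Hr{\sigma}$ by $f_\xi(h)=\sigma(h^{-1})\xi$ for $h\in H$ and $f_\xi(g)=0$ for $g\notin H$. Openness of $H$ ensures $f_\xi\in\Ind_H^G(\Hr{\sigma})$. Given an intertwiner $T:\Ind_H^G(\sigma)\to\pi$, set $\Phi(T)(\xi)=T(f_\xi)$. The identity $\Ind_H^G(\sigma)(h)f_\xi=f_{\sigma(h)\xi}$ for $h\in H$, verified case-by-case on $H$ and on $G\setminus H$, yields
\begin{equation*}
\Phi(T)(\sigma(h)\xi)=T(f_{\sigma(h)\xi})=T(\Ind_H^G(\sigma)(h)f_\xi)=\pi(h)\Phi(T)(\xi),
\end{equation*}
so $\Phi(T)\in\Hom_H(\sigma,\Res_H^G(\pi))$. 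For injectivity, $G$-equivariance of $T$ gives $T(\Ind_H^G(\sigma)(g)f_\xi)=\pi(g)\Phi(T)(\xi)$; if $\Phi(T)=0$ then $T$ vanishes on every translate $\Ind_H^G(\sigma)(g)f_\xi$. As $g$ runs over coset representatives and $\xi$ over $\Hr{\sigma}$, these translates exhaust the elements of $\Ind_H^G(\Hr{\sigma})$ supported on a single coset of $H$, and their linear span is dense in $\Ind_H^G(\Hr{\sigma})$ (each $f$ decomposes into its restrictions to cosets, which converge in the $\ell^2$ direct sum over $G/H$). Continuity of $T$ then forces $T=0$, proving the inequality.

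For the equality when $[G:H]<\infty$, I would exhibit the inverse
\begin{equation*}
\Psi(S)(f)=\sum_{gH\in G/H}\pi(g)S(f(g)).
\end{equation*}
The sum is finite; it is independent of coset representative, since the replacement $g\mapsto gh$ turns $\pi(gh)S(\sigma(h^{-1})f(g))$ into $\pi(g)S(f(g))$ by $H$-equivariance of $S$; and $\Psi(S)$ is $G$-equivariant by relabelling the summation index. The identities $\Psi\circ\Phi=\id$ and $\Phi\circ\Psi=\id$ follow from the decomposition $f=\sum_{gH\in G/H}\Ind_H^G(\sigma)(g)f_{f(g)}$, which requires exactly the finiteness of $G/H$. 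The principal obstacle I would anticipate is the general closed-subgroup case: when $H$ is not open, the functions $f_\xi$ no longer lie in $\Ind_H^G(\Hr{\sigma})$, and one must pass through Mackey's measure-theoretic construction with suitable smoothing approximations of $f_\xi$; this is however bypassed in the present paper since only the open case is invoked.
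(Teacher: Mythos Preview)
The paper does not prove this statement; it is quoted verbatim as \cite[Corollary~1 of Theorem~3.8]{Mackey1976theory} and used as a black box in the subsequent arguments of Appendix~A. There is therefore no proof in the paper to compare your proposal against.

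That said, your argument is a correct and standard proof of Frobenius reciprocity in the open-subgroup setting, which is indeed the only case the paper invokes (since $H$ has index $2$ in $G$ throughout the appendix). The map $\Phi$ you construct is the usual evaluation-at-identity map, your injectivity argument via density of single-coset functions is sound, and the inverse $\Psi$ in the finite-index case is the standard averaging construction. Your acknowledgment that the general closed-subgroup inequality requires Mackey's measure-theoretic machinery is accurate and appropriately flagged as outside the scope needed here.
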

	\begin{proposition}\label{prop des induite qui se split ou pas}
		Let $\sigma$ be an irreducible representation of $H$. Then, we have $$\Res_H^G(\Ind_H^G(\sigma))\cong\sigma \oplus \sigma^t.$$ Furthermore, the followings hold:
		\begin{itemize}
			\item $\sigma\not\cong \sigma^t$ if and only if $\Ind_H^G(\sigma)$ is an irreducible representation of $G$ and in that case $\Ind_H^G(\sigma)\cong\Ind_H^G(\sigma)^\tau$.
			\item $\sigma \cong \sigma^t$ if and only if $\Ind_H^G(\sigma)\cong \pi \oplus \pi^\tau$ for some irreducible representation $\pi$ of $G$.
		\end{itemize}
	\end{proposition}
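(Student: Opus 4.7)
The plan is to combine Frobenius reciprocity (Theorem~\ref{criterion of mackey weak frobenius reciprocity}, which becomes an equality here since $[G:H]=2$) with Lemmas~\ref{Lemma les rep de G se decompose ou pas en irred si H est d'indice 2} and~\ref{Lemma equivalentce criterion between sigma et sigma g et pi et pitau}, using dimension counts of endomorphism algebras as the main tool.

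First I would establish the restriction formula directly from the definition of induction. Using $G=H\sqcup tH$, the Hilbert space $\Ind_H^G(\Hr{\sigma})$ is identified with $\Hr{\sigma}\oplus \Hr{\sigma}$ via $\phi\mapsto (\phi(1_G),\phi(t))$, and a direct computation shows that $h\in H$ acts on $(\xi,\eta)$ as $(\sigma(h)\xi,\sigma(t^{-1}ht)\eta)$, which is $\sigma\oplus \sigma^t$ up to equivalence (since $t^{-1}$ lies in the coset $tH$, so $\sigma^{t^{-1}}\cong \sigma^t$).

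Next, Frobenius reciprocity yields
\begin{equation*}
\dim\End_G\bigl(\Ind_H^G(\sigma)\bigr)=\dim\Hom_H\bigl(\sigma,\sigma\oplus \sigma^t\bigr),
\end{equation*}
which equals $1$ if $\sigma\not\cong \sigma^t$ and $2$ otherwise by Schur's lemma. In the first case, irreducibility of $\Ind_H^G(\sigma)$ follows since any proper closed invariant subspace of a unitary representation gives a non-scalar orthogonal projection in the commutant; then Frobenius applied to $\Hom_G(\Ind_H^G(\sigma),\Ind_H^G(\sigma)^\tau)$ (whose restriction to $H$ is again $\sigma\oplus \sigma^t$) produces a non-zero intertwiner, and Schur forces $\Ind_H^G(\sigma)\cong \Ind_H^G(\sigma)^\tau$. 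Conversely, if $\Ind_H^G(\sigma)$ is irreducible then $\dim\End_G\bigl(\Ind_H^G(\sigma)\bigr)=1$, so $\sigma\not\cong \sigma^t$. In the second case, the $2$-dimensional commutant forces a decomposition $\Ind_H^G(\sigma)\cong \pi_1\oplus \pi_2$ into two non-isomorphic irreducibles (multiplicities greater than one would give $\dim\End_G\geq 4$ by Artin--Wedderburn). Each restriction $\Res_H^G(\pi_i)$ must then be irreducible and equal to $\sigma$, since the total restriction is $\sigma\oplus \sigma$ and Lemma~\ref{Lemma les rep de G se decompose ou pas en irred si H est d'indice 2} rules out a factor of the form $\rho\oplus \rho^t$ in this dimension count. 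The contrapositive of Lemma~\ref{Lemma equivalentce criterion between sigma et sigma g et pi et pitau} then yields $\pi_1\not\cong \pi_1^\tau$, while a final Frobenius computation $\dim\Hom_G(\Ind_H^G(\sigma),\pi_1^\tau)=\dim\Hom_H(\sigma,\sigma)=1$ shows $\pi_1^\tau$ is a subrepresentation of $\Ind_H^G(\sigma)$, forcing $\pi_2\cong \pi_1^\tau$.

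The backward implication of the second bullet is immediate: a direct sum $\pi\oplus \pi^\tau$ of irreducible representations is never irreducible, so the converse of the first bullet forces $\sigma\cong \sigma^t$. The main obstacle lies in the case $\sigma\cong \sigma^t$, where one must correctly organize the decomposition of $\Ind_H^G(\sigma)$ into exactly two non-isomorphic irreducible summands and identify its summands with each other via the twist by $\tau$; this is accomplished through the endomorphism algebra dimension count combined with Frobenius reciprocity and the contrapositive of Lemma~\ref{Lemma equivalentce criterion between sigma et sigma g et pi et pitau}.
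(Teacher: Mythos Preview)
Your proof is correct and follows essentially the same approach as the paper: a direct computation of $\Res_H^G(\Ind_H^G(\sigma))$, followed by repeated applications of Frobenius reciprocity (Theorem~\ref{criterion of mackey weak frobenius reciprocity}) and Schur's lemma, together with Lemmas~\ref{Lemma les rep de G se decompose ou pas en irred si H est d'indice 2} and~\ref{Lemma equivalentce criterion between sigma et sigma g et pi et pitau}. The only organizational difference is in the case $\sigma\cong\sigma^t$: you invoke the $C^*$-structure of the $2$-dimensional commutant to force a decomposition into two non-isomorphic irreducibles and then identify $\pi_2\cong\pi_1^\tau$ via Frobenius and the contrapositive of Lemma~\ref{Lemma equivalentce criterion between sigma et sigma g et pi et pitau}, whereas the paper argues from the $H$-decomposition that there are at most two irreducible $G$-summands and then does a case split on whether $\pi\cong\pi^\tau$; both routes are short and yield the same conclusion.
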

	\begin{proof}
		We start by showing that $\Res_H^G(\Ind_H^G(\sigma))\cong\sigma \oplus \sigma^t$. We set $$\mathcal{L}=\{\varphi\in \Ind_H^G(\Hr{\sigma}) \lvert \mbox{supp}(\varphi)\subseteq H\}\mbox{ and }\mathcal{L}^t=\{\varphi\in \Ind_H^G(\Hr{\sigma}) \lvert \mbox{supp}(\varphi)\subseteq Ht\}.$$ By definition of $\Ind_H^G(\Hr{\sigma})$ and since $G= H\sqcup Ht$, it is clear that $$\Ind_H^G(\Hr{\sigma})=\mathcal{L}\oplus \mathcal{L}^t.$$
		Now, notice that $\mathcal{U}: \mathcal{L}\rightarrow \Hr{\sigma}: \varphi \mapsto \varphi(1_G)$ is a unitary operator and that for every $h\in H$ and every $\varphi\in \mathcal{L}$ we have
		\begin{equation*}
		\begin{split}
		\sigma(h)\mathcal{U}\varphi=\sigma(h)\varphi(1_G)=\varphi(h^{-1})=\lb\Ind_H^G(\sigma)(h)\rb\varphi(1_G)=\mathcal{U}\lb \Ind_H^G(\sigma)(h)\rb\varphi. 
		\end{split}
		\end{equation*}
		In particular, this proves that $(\Res_H^G(\Ind_H^G(\sigma)), \mathcal{L})\cong (\sigma,\Hr{\sigma})$.	Similarly, notice that $\mathcal{U}^t: \mathcal{L}^t\rightarrow \Hr{\sigma}: \varphi \mapsto \varphi(t^{-1})$ is a unitary operator and that for every $h\in H$ and every $\varphi\in \mathcal{L}^t$ we have
		\begin{equation*}
		\begin{split}
		\sigma^t(h)\mathcal{U}^t\varphi&=\sigma(tht^{-1})\varphi(t^{-1})=\varphi(t^{-1}th^{-1}t^{-1})\\
		&=\varphi(h^{-1}t^{-1})=\lb \Ind_H^G(\sigma)(h)\rb\varphi(t^{-1})=\mathcal{U}^t\lb \Ind_H^G(\sigma)(h)\rb\varphi.
		\end{split}
		\end{equation*}
		This proves that $(\Res_H^G(\Ind_H^G(\sigma)), \mathcal{L}^t)\cong (\sigma^t,\Hr{\sigma^t})$ and we obtain as desired that $\Res_H^G(\Ind_H^G(\sigma))\cong\sigma \oplus \sigma^t$. 
		
		Since Theorem \ref{criterion of mackey weak frobenius reciprocity} ensures that
		$${\rm I}(\Ind_H^G(\sigma),\Ind_H^G(\sigma))={\rm I}(\sigma,\Res_H^G\big(\Ind_H^G(\sigma)\big)$$
		this implies that $\Ind_H^G(\sigma)$ is irreducible (that is ${\rm I}(\Ind_H^G(\sigma),\Ind_H^G(\sigma))=1$) if and only if $\sigma\not \cong \sigma^t$. Furthermore, in that case, Theorem \ref{criterion of mackey weak frobenius reciprocity} ensures that 
		\begin{equation*}
		\begin{split}
		{\rm I}\big(\Ind_H^G(\sigma),\Ind_H^G(\sigma)^\tau\big)&={\rm I}\big(\sigma,\Res_H^G\big(\Ind_H^G(\sigma)^\tau\big)\big)\\
		&={\rm I}\big(\sigma,\Res_H^G\big(\Ind_H^G(\sigma)\big)\big)\\
		&={\rm I}\big(\Ind_H^G(\sigma),\Ind_H^G(\sigma)\big)=1
		\end{split}
		\end{equation*}
		which proves that $\Ind_H^G(\sigma)\cong\Ind_H^G(\sigma)^\tau$ and settles the first case.
		
		On the other hand, Theorem \ref{criterion of mackey weak frobenius reciprocity} ensures that $\sigma\cong \sigma^t$ if and only if $\Ind_H^G(\sigma)$ is not irreducible.In that case, notice that $\Ind_H^G(\Hr{\sigma})$ must split as a sum of two non-zero closed $G$-invariant subspaces $M$ and $M'$. On the other hand, since $\Res_H^G\big(\Ind_H^G(\sigma)\big)$ splits as a sum of two irreducible representations of $H$, and since every $G$-invariant subspace is $H$-invariant, $M$ and $M'$ do not admit any proper invariant subspaces. This proves that $\Ind_H^G(\sigma)\cong \pi\oplus\pi'$ for some irreducible representations $\pi$ and $\pi'$ of $G$. On the other hand, since $\Res_H^G(\pi)=\Res_H^G(\pi^\tau)$, Theorem \ref{criterion of mackey weak frobenius reciprocity} ensures that
		$${\rm I}(\Ind_H^G(\sigma),\pi)={\rm I}(\sigma,\Res_H^G(\pi))={\rm I}(\sigma,\Res_H^G(\pi^\tau))={\rm I}(\Ind_H^G(\sigma),\pi^\tau)$$
		for every irreducible representation $\pi$ of $G$. In particular, if $\pi\not \cong \pi^\tau$, we obtain that $\Ind_H^G(\sigma)\cong \pi \oplus \pi^\tau$. On the other hand, if $\pi\cong \pi^\tau$, notice from Lemma \ref{Lemma equivalentce criterion between sigma et sigma g et pi et pitau} that $\Res_H^G(\pi)\cong \sigma\oplus \sigma^t$. Hence, since $\sigma\cong \sigma^t$, Theorem \ref{criterion of mackey weak frobenius reciprocity} implies that
		\begin{equation*}
		\begin{split}
		{\rm I}(\Ind_H^G(\sigma),\pi)={\rm I}(\sigma,\Res_H^G(\pi))={\rm I}(\sigma,\sigma \oplus \sigma^t)>1
		\end{split}
		\end{equation*}
		which proves that $\Ind_H^G(\sigma)\cong \pi \oplus \pi \cong \pi \oplus \pi^\tau$.
	\end{proof}
	The first part of Theorem \ref{les rep dun group loc compact par rapport a celle d'un de ses sous groupes} follows from Lemma \ref{Lemma les rep de G se decompose ou pas en irred si H est d'indice 2}, Lemma \ref{Lemma equivalentce criterion between sigma et sigma g et pi et pitau}, Proposition \ref{prop des induite qui se split ou pas} and from the impossibility to have simultaneously that $\pi\cong \pi^\tau$ and that $\sigma\cong \sigma^t$. Indeed, if $\pi\cong \pi^\tau$, Lemma \ref{Lemma equivalentce criterion between sigma et sigma g et pi et pitau} ensures that $\Res_H^G(\pi)\cong \sigma\oplus \sigma^t$. However, if $\sigma \cong \sigma^t$, Proposition \ref{prop des induite qui se split ou pas} ensures that $\Ind_H^G(\sigma)\cong \pi \oplus \pi^\tau$. In particular, if those conditions were satisfied simultaneously one would obtain that $$\Res_H^G\big(\Ind_H^G(\sigma)\big)\cong \sigma \oplus \sigma^t \oplus \sigma \oplus \sigma^t\cong 4 \sigma$$
	which impossible since Proposition \ref{prop des induite qui se split ou pas} ensures that $$\Res_H^G(\Ind_H^G(\sigma))\cong\sigma \oplus \sigma^t\cong 2\sigma.$$
	
	\noindent The following result completes the proof of Theorem \ref{les rep dun group loc compact par rapport a celle d'un de ses sous groupes}.
	\begin{lemma}
		Every irreducible representation $\pi$ of $G$ satisfies $\pi\leq \Ind_H^G(\sigma)$ for some irreducible representation $\sigma$ of $H$ and every irreducible representation $\sigma$ of $H$ satisfies $\sigma\leq \Res_H^G(\pi)$ for some irreducible representation $\pi$ of $G$. 
	\end{lemma}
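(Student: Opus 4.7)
The plan is to prove both halves of the lemma as direct consequences of Frobenius reciprocity, using the structural results already established for induction and restriction across the index-$2$ inclusion $H\leq G$. Since $[G:H]=2$ is finite, Theorem \ref{criterion of mackey weak frobenius reciprocity} supplies the full equality
$${\rm I}(\Ind_H^G(\sigma),\pi)={\rm I}(\sigma,\Res_H^G(\pi))$$
for every irreducible representation $\sigma$ of $H$ and every irreducible representation $\pi$ of $G$. Each half of the statement will reduce to producing a non-zero intertwiner on one side of this identity, which is automatic once one exhibits an irreducible subrepresentation of a restriction or an induction.

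For the first half, I would start with an arbitrary irreducible representation $\pi$ of $G$. Lemma \ref{Lemma les rep de G se decompose ou pas en irred si H est d'indice 2} guarantees that $\Res_H^G(\pi)$ decomposes as a direct sum of at most two irreducible representations of $H$; in particular there exists an irreducible $\sigma$ of $H$ with $\sigma\leq \Res_H^G(\pi)$. This yields ${\rm I}(\sigma,\Res_H^G(\pi))\geq 1$, and Frobenius reciprocity then gives ${\rm I}(\Ind_H^G(\sigma),\pi)\geq 1$, i.e.\ $\pi\leq \Ind_H^G(\sigma)$.

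For the second half, I would start with an arbitrary irreducible representation $\sigma$ of $H$ and consider $\Ind_H^G(\sigma)$. By Proposition \ref{prop des induite qui se split ou pas}, the induced representation is either already irreducible (when $\sigma\not\cong \sigma^t$) or splits as $\pi\oplus \pi^\tau$ for some irreducible $\pi$ of $G$ (when $\sigma\cong \sigma^t$). In either case $\Ind_H^G(\sigma)$ admits some irreducible subrepresentation $\pi$ of $G$, so ${\rm I}(\Ind_H^G(\sigma),\pi)\geq 1$, and the equality form of Frobenius reciprocity yields ${\rm I}(\sigma,\Res_H^G(\pi))\geq 1$, that is $\sigma\leq \Res_H^G(\pi)$.

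There is no genuine obstacle here: once one invokes Lemma \ref{Lemma les rep de G se decompose ou pas en irred si H est d'indice 2} and Proposition \ref{prop des induite qui se split ou pas} to produce an irreducible component on the appropriate side, each implication is a one-line application of Frobenius reciprocity. The only subtlety worth flagging in the writeup is that the equality (rather than inequality) form of Theorem \ref{criterion of mackey weak frobenius reciprocity} is exactly what allows the transfer of a non-zero intertwiner from one direction to the other, and this relies precisely on the finite-index hypothesis $[G:H]=2$.
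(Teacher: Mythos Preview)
Your proposal is correct and follows essentially the same approach as the paper: both arguments combine Frobenius reciprocity (Theorem \ref{criterion of mackey weak frobenius reciprocity}) with the structural results Lemma \ref{Lemma les rep de G se decompose ou pas en irred si H est d'indice 2} and Proposition \ref{prop des induite qui se split ou pas}. The only cosmetic difference is the order of operations---the paper first applies Frobenius to the full (possibly reducible) restriction or induction and then decomposes, whereas you first extract an irreducible component and then apply Frobenius directly; your ordering is arguably slightly cleaner.
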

	\begin{proof}
		Let $\pi$ be an irreducible representation of $G$ and let us show that $\pi\leq \Ind_H^G(\sigma)$ for some irreducible representation $\sigma$ of $H$. Notice from Theorem \ref{criterion of mackey weak frobenius reciprocity} that
		$${\rm I}(\Ind_H^G\big(\Res_H^G(\pi)\big), \pi)={\rm I}(\Res_H^G(\pi),\Res_H^G(\pi))\geq1$$
		which proves that $\pi\leq \Ind_H^G\big(\Res_H^G(\pi)\big)$. If $\Res_H^G(\pi)$ is irreducible, the result follows trivially. On the other hand, if $\Res_H^G(\pi)$ is not irreducible, Lemma \ref{Lemma les rep de G se decompose ou pas en irred si H est d'indice 2} ensures that $\Res_H^G(\pi)\cong \sigma\oplus \sigma^t$ for some irreducible representation $\sigma$ of $H$. In particular, since $\pi \leq \Ind_H^G\big(\Res_H^G(\pi)\big) \cong \Ind_H^G(\sigma)\oplus\Ind_H^G(\sigma^t)$ we obtain either that $\pi \leq \Ind_H^G(\sigma)$ or that $\pi \leq \Ind_H^G(\sigma^t)$. 
		
		Now, let $\sigma$ be an irreducible representation of $H$ and let us show that $\sigma\leq \Res_H^G(\pi)$ for some irreducible representation $\pi$ of $G$. Notice from Theorem \ref{criterion of mackey weak frobenius reciprocity} that
		$${\rm I}(\sigma,\Res_H^G\big(\Ind_H^G(\sigma)\big))={\rm I}(\Ind_H^G(\sigma), \Ind_H^G(\sigma))\geq1$$
		which proves that $\sigma\leq \Res_H^G\big(\Ind_H^G(\sigma)\big)$. If $\Ind_H^G(\sigma)$ is irreducible, the result follows trivially. On the other hand, if $\Ind_H^G(\pi)$ is not irreducible, Proposition \ref{prop des induite qui se split ou pas} ensures that $\Ind_H^G(\sigma)\cong \pi\oplus \pi^\tau$ for some irreducible representation $\pi$ of $G$. In particular, since $\pi \leq \Ind_H^G\big(\Res_H^G(\pi)\big) \cong \Ind_H^G(\sigma)\oplus\Ind_H^G(\sigma^t)$ we obtain either that $\pi \leq \Ind_H^G(\sigma)$ or that $\pi \leq \Ind_H^G(\sigma^t)$.
	\end{proof}
	Finally, as a consequence of Theorem \ref{theorem simple radu groups are unifmroly admissible} and of the correspondence provided by Theorem \ref{les rep dun group loc compact par rapport a celle d'un de ses sous groupes}, the following lemma ensures that every Radu group on a $(d_0,d_1)$-semi-regular tree with $d_0,d_1\geq 6$ is uniformly admissible.
	\begin{lemma}\label{lemma G admissible iff  H is admissible}
		Let $G$ be a totally disconnected locally compact group and let $H$ be a closed subgroup of index $2$. Then, $G$ is uniformly admissible if and only if $H$ is uniformly admissible.
	\end{lemma}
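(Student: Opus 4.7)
The plan is to reduce the question to a direct comparison of dimensions of invariant subspaces, using the correspondence of Theorem \ref{les rep dun group loc compact par rapport a celle d'un de ses sous groupes} to transfer irreducible representations between $G$ and $H$. The key observation is that since $H$ has index $2$ in $G$, it is open in $G$, so a subgroup $K\leq G$ is compact open in $G$ if and only if $K\cap H$ is compact open in $H$, and conversely every compact open subgroup of $H$ is already compact open in $G$. Hence bases of compact open subgroups are essentially shared between the two groups, up to passing to an index $\leq 2$ intersection.

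For the direction ``$G$ uniformly admissible $\Rightarrow$ $H$ uniformly admissible'', I would let $K\leq H$ be a compact open subgroup of $H$; then $K$ is also a compact open subgroup of $G$ so uniform admissibility of $G$ provides a bound $k_K$ such that $\dim(\Hr{\pi}^K)\leq k_K$ for every irreducible representation $\pi$ of $G$. Given any irreducible representation $\sigma$ of $H$, Theorem \ref{les rep dun group loc compact par rapport a celle d'un de ses sous groupes} yields an irreducible representation $\pi$ of $G$ with $\sigma\leq \Res_H^G(\pi)$, which gives an isometric $H$-equivariant embedding $\Hr{\sigma}\hookrightarrow \Hr{\pi}$. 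Since $K\leq H$, this embedding sends $K$-invariant vectors to $K$-invariant vectors, so $\dim(\Hr{\sigma}^K)\leq \dim(\Hr{\pi}^K)\leq k_K$. The same constant $k_K$ works uniformly.

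For the converse direction, I would take an arbitrary compact open subgroup $K'\leq G$ and set $K=K'\cap H$; as noted $K$ is compact open in $H$. Uniform admissibility of $H$ gives some $k_K\in \N$ bounding $\dim(\Hr{\sigma}^K)$ for every irreducible $\sigma$ of $H$. For an irreducible $\pi$ of $G$, Theorem \ref{les rep dun group loc compact par rapport a celle d'un de ses sous groupes} tells me that $\Res_H^G(\pi)$ is either an irreducible representation of $H$ or isomorphic to a sum $\sigma\oplus \sigma^t$ of two irreducible representations of $H$. In both cases $\dim(\Hr{\pi}^K)\leq 2k_K$, since in the second case $\Hr{\pi}^K\cong \Hr{\sigma}^K\oplus \Hr{\sigma^t}^K$ and each summand has dimension at most $k_K$. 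Finally, $\Hr{\pi}^{K'}\subseteq \Hr{\pi}^K$ because $K\leq K'$, so $\dim(\Hr{\pi}^{K'})\leq 2k_K$, uniformly in $\pi$.

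There is essentially no obstacle: the argument is pure bookkeeping once Theorem \ref{les rep dun group loc compact par rapport a celle d'un de ses sous groupes} is available. The only mild subtlety is to remember that in the reverse direction one may lose a factor of $2$ in the uniform bound (from the two-to-one nature of the $G$/$H$-correspondence), and that in the forward direction no loss occurs because restriction can only embed $K$-invariants into a larger space of $K$-invariants.
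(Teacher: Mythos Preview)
Your proof is correct and follows essentially the same route as the paper's: both directions use the correspondence of Theorem \ref{les rep dun group loc compact par rapport a celle d'un de ses sous groupes} to transfer irreducible representations between $G$ and $H$, together with the observation that $H$ being open forces compact open subgroups to be compatible up to an index-$2$ intersection. The only notable difference is in the forward direction: the paper embeds $\sigma$ into $\Res_H^G(\Ind_H^G(\sigma))$ and then bounds $\dim(\Hr{\Ind_H^G(\sigma)}^K)$ by $2k_K$ using that $\Ind_H^G(\sigma)$ is either irreducible or a sum of two irreducibles, whereas you directly invoke part (2) of Theorem \ref{les rep dun group loc compact par rapport a celle d'un de ses sous groupes} to find a single irreducible $\pi$ of $G$ with $\sigma\leq\Res_H^G(\pi)$, yielding the sharper bound $k_K$. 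Your variant is marginally cleaner, but the content is the same.
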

	\begin{proof}
		Suppose that $G$ is uniformly admissible and let $K$ be a compact open subgroup of $H$. Since $H$ has index $2$ in $G$, it is a clopen subgroup of $G$ whcih implies that $K$ is a compact open subgroup of $G$. Since $G$ is uniformly admissible, there exists a constant $k_K\in \N$ such that $\dim(\Hr{\pi}^K)\leq k_K$ for every irreducible representation $\pi$ of $G$. Let $\sigma$ be an irreducible representation of $H$. Theorem \ref{les rep dun group loc compact par rapport a celle d'un de ses sous groupes} ensures that $\Ind_H^G(\sigma)$ is either irreducible or splits as a sum of two irreducible representations of $G$. On the other hand, notice from Theorem \ref{criterion of mackey weak frobenius reciprocity} that $${\rm I}(\sigma,\Res_H^G(\Ind_H^G(\sigma)))={\rm I}(\Ind_H^G(\sigma),\Ind_H^G(\sigma))\geq 1$$ which implies that $\sigma\leq \Res_H^G(\Ind_H^G(\sigma))$. All together, this proves that $$\dim(\Hr{\sigma}^K)\leq \dim\big(\Hr{\Ind_H^G(\sigma)}^K\big)\leq 2k_K$$ and $H$ is uniformly admissible. 
		
		Suppose now that $H$ is uniformly admissible and let $K$ be a compact open subgroup of $G$. Since $H$ has index $2$ in $G$, it is a clopen subgroup of $G$ and $K\cap H$ is a compact open subgroup of $H$. Since $H$ is uniformly admissible, this implies the existence of a constant $k_{K\cap H}$ such that $\dim(\Hr{\sigma})\leq k_{k_{K\cap H}}$ for every irreducible representation $\sigma$ of $H$. Furthermore, Theorem \ref{les rep dun group loc compact par rapport a celle d'un de ses sous groupes} ensures that $\Res_H^G(\pi)$ is either an irreducible representation of $G$ or splits as a direct sum of $2$ irreducible representations of $H$. This implies that $$\dim(\Hr{\pi}^K)\leq \dim(\Hr{\pi}^{K\cap H})= \dim\big(\Hr{\Res_H^G(\pi)}^{K\cap H}\big)\leq 2k_{K\cap H}.$$
		Hence, $G$ is uniformly admissible. 
	\end{proof}
	\end{appendices}
\clearpage
\newpage
\addcontentsline{toc}{section}{Bibliography}
\bibliographystyle{alpha}
\bibliography{bibliography}	
\end{document}